\def\titlerunning#1{\gdef\titrun{#1}}
\def\author#1{\gdef\autrun{\def\and{\unskip, }#1}\gdef\@author{#1}}
\def\address#1{{\def\and{\\\hspace*{18pt}}\renewcommand{\thefootnote}{}%
		\footnote {#1}}%
	\markboth{\autrun}{\titrun}}
\def\subjclass#1{{\renewcommand{\thefootnote}{}%
		\footnote{\emph{Mathematics Subject Classification (2010):} #1}}}
\def\keywords#1{\par\medskip
	\noindent\textbf{Keywords.} #1}
\numberwithin{equation}{section}
\newcommand{\R}{{\mathbb R}}
\newcommand{\Z}{{\mathbb Z}}
\newcommand{\N}{{\mathbb N}}
\newcommand{\sphere}{{\mathbb S}}
\newcommand{\ga}{\gamma}
\newcommand{\Ga}{\Gamma}
\newcommand{\La}{\Lambda}
\newcommand{\Om}{\Omega}
\newcommand{\de}{\delta}
\newcommand{\be}{\beta}
\newcommand{\si}{\sigma}
\newcommand{\eps}{\varepsilon}
\newcommand{\iso}{\operatorname{Iso}}
\newcommand{\SO}{\operatorname{SO}}
\newcommand{\supp}{\operatorname{Supp}}
\newcommand{\actson}{\curvearrowright}
\renewcommand{\cal}[1]{{\mathcal #1}}
\newcommand{\C}[1]{{\protect\cal #1}}
\newcommand{\I}[1]{{\mathbbm #1}}
\renewcommand{\O}[1]{\overline{#1}}
\newcommand{\G}[1]{\mathfrak{#1}}
\newcommand{\V}[1]{\mathbold{#1}}
\newcommand{\e}{\varepsilon}
\renewcommand{\mid}{\colon}
\newcommand{\dd}{\,\mathrm{d}}
\newcommand{\Ad}{\mathrm{Ad}}
\newcommand{\Id}{\operatorname{Id}}
\newif\ifnotesw\noteswtrue
\newcommand{\hide}[1]{}
\def\mathclap#1{\text{\hbox to 0pt{\hss$\mathsurround=0pt#1$\hss}}}
\newcommand{\beq}{\begin{equation}}
\newcommand{\eeq}{\end{equation}}
\newtheorem{theorem}{Theorem}[section]
\newtheorem*{theorem*}{Theorem}
\newtheorem{lemma}[theorem]{Lemma}
\newtheorem{claim}{Claim}[theorem]
\newtheorem{prop}[theorem]{Proposition}
\newtheorem{corollary}[theorem]{Corollary}
\theoremstyle{definition}
\newtheorem{definition}[theorem]{Definition}
\newtheorem{example}[theorem]{Example}
\newtheorem{remark}[theorem]{Remark}
\newtheorem{remarks}[theorem]{Remarks}
\newtheorem{assumptions}[theorem]{Assumption}
\newcommand{\bpf}[1][Proof.]{\smallskip\noindent{\it #1} }
\renewcommand{\qed}{\nolinebreak\mbox{\hspace{5 true pt}%
  \rule[-0.85 true pt]{3.9 true pt}{8.1 true pt}}}
\newcommand{\epf}{\qed \medskip}
\renewcommand{\ge}{\geqslant}
\renewcommand{\le}{\leqslant}
\renewcommand{\preceq}{\preccurlyeq}
\renewcommand{\subset}{\subseteq}
\renewcommand{\supset}{\supseteq}
\renewcommand{\ldots}{\hspace{0.9pt}.\hspace{0.3pt}.\hspace{0.3pt}.\hspace{1.5pt}}
\newcommand{\SL}{\operatorname{SL}}
\newcommand{\Matrix}[2]{\left[\begin{array}{#1} #2\end{array}\right]}
\newcommand{\CU}{\C C}
\begin{document}
	
	
\baselineskip=17pt

\title{Measurable equidecompositions for group actions with an expansion property}
\titlerunning{Measurable equidecompositions}

\author{{\L}ukasz Grabowski
	\and 
	Andr\'as M\'ath\'e
\and
Oleg Pikhurko}

\date{}

\maketitle

\address{F1. Grabowski: Department of Mathematics and Statistics,
	Lancaster University,
	Lancaster LA1 4YF, UK
	\and
	F2. M\'ath\'e: Mathematics Institute, University of Warwick, Coventry CV4 7AL, UK
\and
F3. Pikhurko: Mathematics Institute and DIMAP, University of Warwick, Coventry CV4 7AL, UK}

\subjclass{Primary 03E15, 28A05; Secondary 26E60, 28D15, 37A15, 54H05}


\begin{abstract}
Given an action of a group $\Gamma$ on a measure space $\Omega$, we provide a sufficient criterion under which two sets $A, B\subset \Omega$ are \textit{measurably equidecomposable}, i.e.,~$A$ can be partitioned into finitely many measurable  pieces which can be rearranged using some elements of $\Gamma$ to form a partition of $B$. In particular, we prove that every bounded measurable
subset of $\R^n$, $n\ge 3$, with non-empty interior is measurably equidecomposable to a ball via  isometries. The analogous result also holds for some other spaces, such as the sphere or the hyperbolic space of dimension $n\ge 2$.

\keywords{Banach-Tarski Paradox, finitely additive mean, local spectral gap, measurable equidecomposition}
\end{abstract}


\section{Introduction}

In this paper, we present a general sufficient criterion for equidecomposing a given pair of sets using measurable pieces. 
In order to state quickly some concrete results obtainable by our method, we discuss first the Euclidean space $\I R^n$,  which is probably the most important special case.  

\subsection{Euclidean space $\I R^n$}\label{se:IntroRn}

Let us call two subsets $A$ and $B$ of $\R^n$  \textit{(set-theoretically) equidecomposable} if it is possible to find a partition of $A$ into finitely many pieces and rearrange these pieces using  orientation-preserving isometries to form a partition of $B$. 
The most famous result about equidecomposable sets is known as the \textit{Banach-Tarski  paradox}: in 
$\R^3$, the unit ball and two disjoint copies of the unit ball are equidecomposable. It is a special case of the following theorem.

\begin{theorem}[Banach and Tarski \cite{BanachTarski24}]\label{th:BT} 
When $n\ge 3$, any two bounded subsets of $\I R^n$ with non-empty interiors are equidecomposable.\qed
\end{theorem}

An earlier result of Banach~\cite{Banach23} gives that, when $n\le 2$, equidecomposable subsets of $\I R^n$ which are measurable have the same Lebesgue measure.
In view of this, Tarski \cite{Tarski25} formulated the following problem, known as \textit{Tarski's circle squaring}: \textit{is a disk in $\R^2$ equidecomposable to a square of the same area?} 
Some 65 years later, Laczkovich \cite{Laczkovich90} showed that Tarski's circle squaring is possible.

There are various results which imply the impossibility of equidecompositions when additional regularity of the pieces is required. Examples include Dehn's theorem \cite{Dehn01} solving Hilbert's third problem and the result of Dubins, Hirsch and Karush \cite{DubinsHirschKarush63} which shows that circle squaring is not possible with Jordan domains. 

On the other hand, until recently there have been very few general positive results on the existence of \emph{measurable equidecompositions} (where each piece has to be Lebesgue measurable), although a related problem of measurable equidecompositions with countably many pieces was studied already by Banach and Tarski \cite[Th\'eor\`eme~42]{BanachTarski24}.
(See \cite[Section~11.3]{TomkowiczWagon:btp} for a survey of ``countable equidecompositions''.) 

Some progress has been recently made for sets with ``small boundary''. Namely,  if the boundaries of $A,B\subseteq \R^n$, $n\ge 1$, have upper Minkowski dimension strictly less than~$n$ while the sets have the same positive Lebesgue measure, then $A$ and $B$ are equidecomposable with pieces that are both Lebesgue and Baire measurable (Grabowski, M\'ath\'e and Pikhurko~\cite{GrabowskiMathePikhurko17}), Jordan measurable (M\'ath\'e, Noel and Pikhurko~\cite{MatheNoelPikhurko}), or Borel if the sets $A$ and $B$ are also Borel (Marks and Unger~\cite{MarksUnger17}).

In this paper we give a general criterion for measurable equidecomposability, which in particular applies to $\I R^n$ for  $n\ge 3$. An important feature of the present work when compared with \cite{GrabowskiMathePikhurko17,MarksUnger17,MatheNoelPikhurko} is 
that for a large natural class of sets $A\subset \R^n$, $n\ge 3$, we are able to completely characterise sets $B$ which are measurably equidecomposable to $A$. Furthermore, we do not need to assume anything about the boundaries of the sets. Given $n$, let $\CU$ consist of all bounded sets $A$ in $\I R^n$ such that some (equivalently, every bounded) non-empty open set can be covered by finitely many sets obtained by applying orientation-preserving isometries to~$A$. The 
following theorem is a direct consequence of our more general Corollary~\ref{cr:easy}.


\begin{theorem}\label{th:MainRn}
Let $n\ge 3$ and let $A\subseteq \R^n$ belong to $\CU$ and be Lebesgue measurable.
Then a set $B\subset \R^n$ is measurably equidecomposable to $A$  if and only if $B$ belongs to $\CU$,  is Lebesgue measurable, and has the same Lebesgue measure as $A$.\qed
\end{theorem}

In particular, for $n\ge 3$, every two bounded measurable subsets of $\I R^n$ with non-empty interior and of the same Lebesgue measure are measurably equidecomposable.



\begin{remarks}\label{re:1}\rm\begin{enumerate}[(i),nosep, topsep=0pt,partopsep=0pt, itemsep=1mm, wide]
\item \label{it:Rem1}The assumption that $n\ge 3$ is needed in Theorem~\ref{th:MainRn}. For example, in $\I R^2$ (resp.\ $\I R^1$), Laczkovich in~\cite[Theorem~3]{Laczkovich03} (resp.~\cite[Theorem 3.3]{Laczkovich93}) constructed continuum many Jordan domains with boundaries differentiable everywhere (resp.\ bounded sets that are countable unions of intervals) that all have measure 1 but no two of these sets are set-theoretically equidecomposable.

\item \label{it:Rem2}
Theorem~\ref{th:MainRn} can be combined with the results of Dougherty and Foreman \cite{DoughertyForeman94} (or recent generalisations by Marks and Unger \cite{MarksUnger16}) to show that if the sets $A$ and $B$ in the theorem also have the property of Baire, then the obtained pieces can be additionally required to have the property of Baire, see Corollary~\ref{cr:Leb}\ref{it:BL}. 

\item\label{it:Rem3} Various questions were posed early in the 20th century regarding the axiomatic theory of the Lebesgue measure, see e.g.\ \cite[Chapter~11]{TomkowiczWagon:btp}.
 A remaining key open problem is whether, for $n\ge 3$, every \emph{mean} (that is, a finitely additive and isometry invariant function) 
$\kappa:\C A\to[0,\infty)$ 
where $\C A$ is the family  of all bounded Borel subsets of $\I R^n$,
is a constant multiple of the Lebesgue measure. 
In a breakthrough, Margulis~\cite{Margulis82} showed that the answer is in the affirmative if we take for $\C A$ the (larger) family of all bounded measurable sets. This resolved the famous Banach-Ruziewicz Problem whose origins can be traced to the 1904 monograph of Lebesgue~\cite{Lebesgue04} (see the discussion in~\cite[Page~267]{TomkowiczWagon:btp}). The special case $\Omega=\I R^n$ of our Theorem~\ref{th:mean} 
reduces the family $\C A$  in the result of Margulis to those sets that additionally have the property of Baire. In brief, the connection comes from the observation that an equidecomposition  between $A$ and $B$ with all pieces in $\C A$ gives a certificate that $\kappa(A)=\kappa(B)$.

\item 
An upper bound on the number of pieces needed in Theorem~\ref{th:MainRn} can be derived from our proof. For example,
our estimates indicate that one can measurably equidecompose a  ball into a cube in $\I R^3$ using at most $20^{10^{10}}$ pieces, see~Remark~\ref{re:sample2}.
On the other hand, we do not know any non-trivial lower bounds for the number of necessary pieces.

\end{enumerate}
\end{remarks}

\subsection{A general criterion for measurable equidecompositions}\label{subsec-results}

In this section, we present our general criterion for measurable equidecomposability. We start by noting the following assumption that applies to the whole paper.

\begin{assumptions}
\label{as:1} Assume that $\Ga$ is a group, $(\Om,\tau)$ is a Polish topological space, $\cal B$ is the Borel $\si$-algebra of $(\Om,\tau)$, and $a:\Ga\actson \Om$ is a (left) action by Borel automorphisms.
\end{assumptions}

Furthermore, whenever we mention a measure $\mu$ on $\Om$, we additionally assume that the following holds.

\begin{assumptions}\label{as:mu} Assume that $\mu$ is a 
$\sigma$-finite measure on $(\Om,\cal B)$ which is non-zero on all non-empty open sets and that the action $a:\Ga\actson \Om$  preserves the measure~$\mu$.\end{assumptions}

We need a few definitions first. The result of the action of $\ga\,{\in}\, \Ga$ on $x\,{\in}\, \Om$ is denoted by $\ga.x:=a(\ga,x)$. Similarly, when $U\,{\subset}\, \Om$ we put $\ga.U := \{\ga.u\colon u\in U\}$, and when $T\subset \Ga$ we put $T.U:=\cup_{\ga\in T} \ga.U$. 
We say that a set $A\subset \Om$ \textit{covers} another set $B\subset\Om$ if there is a finite set $T\subset \Ga$ such that $B\subset T.A$. 

Let us also define various families of subsets of $\Omega$ that we will use. The completion of $\cal B$ with respect to $\mu$ is denoted by  $\cal B_\mu$. Elements of the $\sigma$-algebra $\cal B_\mu$ are called \textit{(Lebesgue) measurable}.  
Let $\C T$
denote the $\sigma$-algebra consisting of sets with the property of Baire. 
Let $\CU$  consist of all subsets of $\Om$ that have compact closure and cover a non-empty open set. While $\CU$ is closed under finite unions, it is not an algebra. Clearly, the $\sigma$-algebras $\C B$ and $\C B_\mu$ are invariant (under the action $a:\Ga\actson\Om$).  If, for example, $\Ga$ acts by homeomorphisms, then $\C T$ and $\CU$ are invariant too.

Given an $a$-invariant set family $\C A\subset2^\Omega$, two subsets $A,B\subseteq \Om$ are called \emph{$\C A$-equidecomposable (with respect to the action $a:\Ga\actson \Om$)} if for some $m\,{\in}\,\I N$ there exist group elements $\gamma_1,\ldots,\gamma_m\in\Gamma$ and a partition $A=A_1\sqcup\ldots\sqcup A_m$ with each piece belonging to $\C A$ such that $\ga_1.A_1,\ldots,\ga_m.A_m$ partition~$B$.
If $\C A$ is equal to $\C B$, $\C B_\mu$, $\C T$ and $\C B_\mu\cap\C T$, then an $\C A$-equidecomposition is called respectively 
\emph{Borel}, \emph{measurable}, \emph{Baire} and \emph{Baire-Lebesgue}. If $\C A=2^\Omega$, then we usually omit any reference to~$\C A$; 
however, if we need to emphasize that no restriction is imposed on the pieces, then we will use the term
\emph{set-theoretic equidecomposition}.

Given~$\mu$, we say that $A$ \textit{essentially covers} $B$ if $A$ covers $B\setminus N$ for some null set $N$.
If $A,B\subset \Om$ are measurable sets and there exist null sets $N$ and $N'$ such that  $A\setminus N$ and $B\setminus N'$  are Borel equidecomposable, then we say that $A$ and $B$ are \textit{essentially Borel equidecomposable}. It is easy to show (see Proposition~\ref{pr:combine}\ref{it:combine1}) that the existence of an essential Borel equidecomposition together with a set-theoretic equidecomposition is equivalent to the existence of a measurable equidecomposition.

We say that $C \in \cal B_\mu$ is a \textit{domain of expansion}  if  $0<\mu(C)<\infty$ and for every real $\eta>0$ there is a finite set  $R\subset \Gamma$ such that  \mbox{for all measurable
	sets $Y\subset C$} we have
$$
\mu ((R.Y) \cap C)\ge \min\left((1-\eta)\,\mu(C),\,\frac{\mu(Y)}{\eta}\right).
$$
Informally speaking, this states
	that all  measurable subsets of $C$ ``uniformly expand'' inside $C$ under a suitable finite subset $R$ of $\Gamma$, unless their $R$-images cover most of~$C$.
This property is crucial in the following general criterion, based on a result of Lyons and Nazarov~\cite{LyonsNazarov11}.

\begin{theorem}\label{th:suff} Let Assumptions \ref{as:1} and~\ref{as:mu} apply. Let  $A\subset \Om$ be a domain of expansion.
Then the following holds.
\begin{enumerate}[(i),nosep]
\item\label{it:suff1} A subset $B\subset\Om$ is essentially Borel equidecomposable to $A$ if and only if $A$ and $B$ essentially cover each other,  $B$ is measurable and $\mu(A)=\mu(B)$.
\item\label{it:suff2} A subset $B\subset\Om$ is measurably  equidecomposable to $A$ if and only if they are set-theoretically equidecomposable, $B$ is measurable and $\mu(A)=\mu(B)$.
\end{enumerate}
\end{theorem}

\subsection{Paradoxical actions}\label{se:paradox}

Equidecompositions have also been considered for spaces other than the Euclidean space $\I R^n$, often
with the aim of establishing ``paradoxes'' and concluding that certain kinds of measures do not exist. We refer the reader to
the excellent monograph on the subject by Tomkowicz and Wagon~\cite{TomkowiczWagon:btp}. 
Having a rich family of set-theoretic equidecompositions will be very useful
when applying Theorem~\ref{th:suff}\ref{it:suff2}. For the purposes of this paper,
we make the following (non-standard) definition.

\begin{definition}\label{de:paradox} Under Assumption~\ref{as:1}, the action $\Ga\actson\Om$ called \emph{paradoxical} if all the following properties hold.
	\begin{enumerate}[(i),nosep]
		\item \label{de:paradox1} The topological space $\Omega$ is locally compact.
		\item \label{de:paradox2} For every $\ga\in\Ga$ and every compact $C\subset\Om$,
		the closure of $\ga.C$ is compact.
		\item 
		\label{de:paradox3} 
		Any two subsets of $\Om$ with compact closure and non-empty interior are equidecomposable.
		\end{enumerate}
	\end{definition}

Of course, the crucial part of this definition is the last property.
The other (mild technical) properties will be needed in some of our arguments.
For example, Property~\ref{de:paradox2} implies that the family $\CU$ (which consists of sets with compact closure that cover a non-empty open set) is invariant. 
As a small detour, let us note the following  general proposition, whose
second part relies on  a powerful result of Marks and Unger~\cite{MarksUnger16}. 

\begin{prop}\label{pr:paradox} Let $a:\Ga\actson\Om$ be paradoxical
	and let $A\in\CU$.
	Then the following holds.
	\begin{enumerate}[(i),nosep]
		\item \label{it:paradox1}
		A subset $B\subset \Om$ is equidecomposable to $A$ if and only if $B\in\CU$. 
		\item\label{it:Paradox2} Suppose additionally that $A\in\C T$ and that the family of all meager subsets  of $\Om$ is $a$-invariant. Then a subset $B\subset \Om$ is $\C T$-equidecomposable to $A$ if and only if $B\in\CU\cap \C T$.
	\end{enumerate}
\end{prop}

Note that if a Borel bijection preserves meager sets, then it is also preserves sets with the property of Baire, so the family $\C T$ in Proposition~\ref{pr:paradox}\ref{it:Paradox2} is invariant.

Let us now look at some concrete examples of actions known to be paradoxical.
Let $\sphere^{n-1}$ denote the unit sphere in $\R^n$ with respect to the Euclidean metric. Let $\I H^n$ denote the $n$-dimensional hyperbolic space with the hyperbolic distance; see Section~\ref{se:Hn} for all formal definitions. 
Let $\iso(\R^n)$, $\SO(n)$ and $\iso(\I H^n)$ denote the group of orientation-preserving isometries of respectively $\R^n$, $\sphere^{n-1}$
and $\I H^n$. 
For each of these groups, we consider its natural action on the corresponding space.
Also, let $G_2$ be the subgroup of affine bijections of $\I R^2$ generated by $\SL(2,\I Z)$ (the linear maps given by $2\times2$ matrices with determinant 1 and all entries in $\I Z$) and all translations (that is, maps of the form $x\mapsto x+u$ for some vector $u\in\I R^2$).
This group naturally acts on $\I R^2$.

Hausdorff~\cite{Hausdorff14ma} showed that a ``third'' of $\I S^{n}$, $n\ge 2$, is equidecomposable to a ``half'' of $\I S^{n}$, which was enough to his intended application, namely, the non-existence of 
a mean defined on all subsets of the sphere. The paradoxicality of $\SO(n+1)\actson \I S^{n}$, $n\ge 2$, as stated
in Definition~\ref{de:paradox}\ref{de:paradox3}, was established by
Banach and Tarski \cite{BanachTarski24}. 
 The paradoxicality of $\iso(\I R^n)\actson\I R^n$ for $n\ge 3$ is the content of
Theorem~\ref{th:BT} (of Banach and Tarski~\cite{BanachTarski24}) while, as we mentioned already, this fails for $n\le 2$ by the results of Banach~\cite{Banach23}. One can ask
what happens if we allow a richer
 group of transformations of~$\I R^2$. The paradoxicality of $G_2\actson\I R^2$ was established in the influential paper of von Neumann~\cite{Neumann29} that introduced the concept of a non-amenable group.
 In fact, some (explicit) smaller subgroups of $G_2$ suffice here, see Mycielski~\cite[Corollary~5]{Mycielski98} (compare also with Wagon~\cite[Theorem~2]{Wagon82}). Laczkovich~\cite{Laczkovich99} 
 showed that the natural action $\SL(2,\I R)\actson\I R^2\setminus\{0\}$ is paradoxical. (See also Tomkowicz~\cite{Tomkowicz11} for a strengthening of this result.)
 	As noted by Mycielski~\cite[Page 143]{Mycielski89}, the paradoxicality of
$\iso(\I H^n)\actson \I H_n$, $n\ge 3$, can be established by observing that the subgroup of isometries that fix a point of $\I H^n$ acts on its every non-trivial orbit
in the same way as $\SO(n)$ acts on $\I S^{n-1}$. Mycielski~\cite{Mycielski89} showed that the action $\iso(\I H^2)\actson \I H_2$ is also paradoxical. (Some small gaps in Mycielski's proof were
fixed by Mycielski and Tomkowicz~\cite{MycielskiTomkowicz13}; the proof, with some further modifications, can also be found in~\cite[Theorem~4.17]{TomkowiczWagon:btp}.) 
We also refer the reader to Tomkowicz~\cite{Tomkowicz17} for a general short proof
of the paradoxicality of many of the above actions.

Let us collect these five (probably, best known) examples of paradoxical actions together. For later reference, we also state the standard invariant measure $\mu$ on $\Om$ in each case.\newpage

\begin{example}[Some known paradoxical actions $\Ga\actson\Om$]\label{ex:1}
	\mbox{}\nopagebreak
	\begin{enumerate}[(i),nosep]
		\item\label{it:Rn} $\Ga = \iso(\R^n)$ and $\Om = \R^n$ with the Lebesgue measure, $n\ge 3$, 
		\item\label{it:S} $\Ga = \SO(n)$ and $\Om=\sphere^{n-1}$ with the $(n-1)$-dimensional Hausdorff measure, $n\ge 3$, 
		\item\label{it:R2} $\Ga = G_2$ and $\Om =\R^{2}$ with the Lebesgue measure,
		\item\label{it:L99} $\Ga=\SL(2,\I R)$ and $\Om=\I R^2\setminus\{0\}$ with the Lebesgue measure,
		\item\label{it:Hn} $\Ga=\iso(\I H^n)$ and $\Om=\I H^{n}$ with the measure defined by~\eqref{eq:HnDensity}, $n\ge 2$.
	\end{enumerate} 
\end{example}

\subsection{Expanding actions}\label{se:appl}

Let us call an action $\Ga\actson\Om$ satisfying Assumptions~\ref{as:1} and~\ref{as:mu}  \emph{expanding (with respect to the measure $\mu$)} 
if every set in $\C B_\mu\cap \CU$ 
is a domain of expansion.

This notion is of interest because of the following corollary that can be derived with some work from Theorem~\ref{th:suff} and Proposition~\ref{pr:paradox}. 

\begin{corollary}\label{cr:Leb} In addition to Assumptions~\ref{as:1} and~\ref{as:mu}, assume that the action is paradoxical and expanding. Let $A\in \C B_{\mu}\cap \CU$. Then the following statements hold.
\begin{enumerate}[(i),nosep]
		
	\item\label{it:EssBor} A subset $B\subset \Om$ is essentially Borel equidecomposable to $A$  if and only if $B\in\C B_\mu$, $\mu(B)=\mu(A)$ and
	there is a null set $N$ with $B\bigtriangleup N\in \CU$.
	
\item\label{it:Mes} A subset $B\subset \Om$ is measurably equidecomposable to $A$  if and only if $B\in \C B_{\mu}\cap \CU$ and $\mu(B)=\mu(A)$.

\item\label{it:BL} Suppose additionally that $A$ has the property of Baire and
that the action preserves the family of meager sets. Then a subset $B\subset \Om$ is Baire-Lebesgue equidecomposable to $A$ if and only if $B\in\C B_{\mu}\cap \CU\cap \C T$ and $\mu(B)=\mu(A)$.
	\end{enumerate}
	
\end{corollary}

On the other hand, many natural actions can be shown to be expanding:

\begin{theorem}\label{th:DE} Each of the actions in Example~\ref{ex:1} is expanding.\end{theorem}

Each these actions is paradoxical and acts by homeomorphisms (in particular, preserving meager sets), so it satisfies all conclusions of Proposition~\ref{pr:paradox} and Corollary~\ref{cr:Leb}.
It is remarkable that one can obtain exact characterisations for so many types of equidecompositions under rather general assumptions (that in particular include all actions in Example~\ref{ex:1}). We did not see such a characterisation anywhere in the previous literature, surprisingly not even for set-theoretic equidecompositions in any case of Example~\ref{ex:1}. In fact, Tomkowicz and Wagon~\cite[Page~176]{TomkowiczWagon:btp} write: \emph{``It is not completely clear which subsets $E$ of $\I S^2$ are $\SO_3(\I R)$-equidecomposable with all of $\I S^2$.''} One answer to this question (exactly those sets that cover the whole of $\I S^2$) is given by Proposition~\ref{pr:paradox}\ref{it:paradox1}.

As an illustration, here is one (easy to state) direct consequence of Corollary~\ref{cr:Leb}\ref{it:Mes} and Theorem~\ref{th:DE}.

\begin{corollary}\label{cr:easy} For each action $\Ga\actson\Om$ of Example~\ref{ex:1}, every two measurable sets $A,B\in\CU$ of the same measure are equidecomposable with measurable pieces.\qed
\end{corollary}

\subsection{Connections to (local) spectral gap and finitely additive means}\label{intro:lsg}

It is not hard to show (see Proposition~\ref{pr:Spectral})
that  if $\mu$ is a finite measure and the action $\Ga\actson \Om$ has spectral gap, then the action is expanding.
Thus the case of $\SO(n)\actson \I S^{n-1}$, $n\ge 3$, of Theorem~\ref{th:DE} can be derived from the known spectral gap results established by  Drinfel'd \cite{Drinfeld84}, Margulis \cite{Margulis80}, and Sullivan \cite{Sullivan81}.

The case of the infinite measure space $\I R^n$ in Theorem~\ref{th:DE} is not directly covered by the above approach. However, we were able to derive it from the spectral gap of $\SO(3)\actson \I S^{2}$, using lengthy but rather elementary arguments. Later, we became aware of the general powerful results by Boutonnet, Ioana and Salehi Golsefidy~\cite{BoutonnetIoanaSalehi17} that can be used here.
Let us discuss this connection in general.

Let $X\subset \Om$ be a measurable set of finite positive measure. For $f\in L^2(\Om,\mu)$, we define  
$\|f\|_{2,X} := (\int_X |f(x)|^2\dd\mu(x))^{1/2}$. 
We say that an action $\Ga\actson \Om$ satisfying Assumptions~\ref{as:1} and~\ref{as:mu} has \textit{local spectral gap with respect to $X$} if there exist a finite set $Q\subset \Ga$ and a constant $c>0$  such that for any $f\in L^2(\Om,\mu)$ with $\int_X f(x) \dd\mu(x) =0$ we have
\beq\label{eq-local-gap}
\|f\|_{2,X}\le c\sum_{\ga\in  Q}\|\ga.f-f\|_{2,X},
\eeq
 where $\ga.f:\Om\to\I R$ is defined by $(\ga.f)(x):=f(\ga^{-1}.x)$, $x\in\Om$.

This notion is of interest to us because, as we will show in Lemma~\ref{lm:equivalence},
the action has local spectral gap with respect to a set $X\subset \Omega$ if and only if $X$ is a domain of expansion. 
Modulo Lemma~\ref{lm:equivalence}, Boutonnet et al~\cite[Theorem~A]{BoutonnetIoanaSalehi17} presented a sufficient condition for the action  to be expanding (stated here as Theorem~\ref{th:BIS:A}). While it seems that Theorem~\ref{th:BIS:A} can be used to derive the full Theorem~\ref{th:DE}, we use it here for $\SL(2,\I R)\actson \I R^2\setminus\{0\}$ and the hyperbolic space only,
presenting more direct proofs of the other cases of Theorem~\ref{th:DE}.

We decided to include also our initial proof of Theorem~\ref{th:DE} for $\iso(\I R^n)\actson \I R^n$. As we have already mentioned, it is rather elementary,  apart from using the spectral gap property of $\SO(3)\actson  \I S^2$. 
Also, it can be used to estimate the number of pieces in the obtained measurable equidecompositions; see Remark~\ref{re:sample2} for an example of a such calculation. 
Last but not least, our proof also gives the following result that does not seem to follow from~\cite{BoutonnetIoanaSalehi17} nor from other known spectral gap results.

\begin{theorem}\label{th:exotic} For each $n\ge 3$, there is a closed nowhere dense bounded subset $X$ of $\I R^n$ such that $\mu(X)>0$
and the action
$\iso(\I R^n)\actson \I R^n$ has local spectral gap with respect to~$X$ (i.e., by Lemma~\ref{lm:equivalence}, $X$ is a domain of expansion).
\end{theorem}

For an $a$-invariant family $\C A\subseteq 2^\Om$ which is closed under finite unions,
a \emph{mean} on $\C A$ is an $a$-invariant finitely additive function $\kappa:\C A\to[0,\infty)$. 
The analogue of the question discussed in Remark~\ref{re:1}\ref{it:Rem3},  namely whether every mean on Borel sets with compact closure is a constant multiple of the measure	$\mu$,
is also open for all actions listed in Example~\ref{ex:1} to the best of the authors' knowledge. 
The following theorem provides some partial progress in this direction.

\begin{theorem}\label{th:mean}
	Let $\Ga\actson\Om$ be any action from Example~\ref{ex:1} and let $\C A$ be the family of all measurable subsets of $\Om$ that have the property of Baire and compact closure. 
	Then every mean on $\C A$ is a constant multiple of the measure~$\mu$.\end{theorem}

The result of Margulis~\cite{Margulis82} (namely, the version of Theorem~\ref{th:mean} 
when we take the action $\iso(\I R^n)\actson \I R^n$ and enlarge the family $\C A$  by dropping the requirement that the sets in $\C A$ have the property of Baire) can be derived by a straightforward modification of our proof. Alternatively, it is a consequence of Theorem~\ref{th:DE}, Lemma~\ref{lm:equivalence} and the implication (4) $\Longrightarrow$ (1) of~\cite[Theorem~7.6]{BoutonnetIoanaSalehi17}.

\subsection{Organisation of the paper}

Section~\ref{aux} contain some further notation and various auxiliary results.

Theorem~\ref{th:suff} is proved in Section~\ref{sec-matchings-in-graphings} where we also give a full proof of the result of
Lyons and Nazarov~\cite{LyonsNazarov11} on the existence of a.e.-perfect Borel  matchings. 

Section~\ref{se:aux} is dedicated to proving Proposition~\ref{pr:paradox} and also contains the derivation of Corollary~\ref{cr:Leb}.

Section~\ref{sec-spectral-gap-and-expansion} discusses  the relation to (local) spectral gap in detail, in particular showing that the action has local spectral gap with respect to $X$ if and only if $X$ is a domain of expansion. 

Section~\ref{se:DE} contains the proofs of all cases of Theorem~\ref{th:DE}. In particular, Section~\ref{se:Rn} contains a few different proofs that the action $\iso(\I R^n)\actson \I R^n$ is expanding for $n\ge 3$, with our new (more elementary) proof appearing in Section~\ref{se:direct}.  This proof is then used to derive 
Theorems~\ref{th:exotic} (at the end of Section~\ref{se:direct}) and to give upper bounds on the number of pieces in some of our equidecompositions (in  Section~\ref{se:Numbers}). 

As we mentioned already, Corollary~\ref{cr:easy} clearly follows from Corollary~\ref{cr:Leb} and Theorem~\ref{th:DE}.

Theorem~\ref{th:mean} is proved in Section~\ref{se:mean}, as a consequence of a more general 
Lemma~\ref{lm:mean}.

As described in Section~\ref{se:IntroRn}, all new results stated there are direct consequences of the above results.

Some concluding remarks and remaining open questions can be found in Section~\ref{se:conluding}.

\section{Some further notation and auxiliary results}\label{aux}

Let us collect some frequently used notation, also recalling some definitions that already appeared in the Introduction.

Let $\I N:=\{0,1,\ldots\}$ consist of non-negative integers. For $k\in\I N$, we denote $[k]:=\{1,\ldots,k\}$. 
When we write $X=A\sqcup B$, we mean that the sets $A$ and $B$ \emph{partition} $X$ (i.e.,\ $A\cup B=X$ and $A\cap B=\emptyset$).  By $\pi_i$ we will denote the projection from a product to its $i$-th coordinate; formally, for sets $X_1,\ldots,X_m$ and $i\in [m]$, the projection $\pi_i$ maps $(x_1,\ldots,x_m)\in\prod_{j=1}^m X_j$ to~$x_i\in X_i$. 

Under Assumption~\ref{as:1}, we will use the following shorthands for $S,T\subseteq \Ga$, $\ga\in \Ga$, $y\in \Om$ and $X\subseteq \Om$:
\begin{eqnarray*}
	\ga.y&:=&a(\ga,y)\ \in\ \Om,\\
	\ga.X&:=&\{\ga.x\mid x\in X\}\ \subseteq\ \Om,\\
	S.X&:=&\cup_{\ga\in S}\,\ga.X\ =\ \{\ga.x\mid \ga\in S,\ x\in X\}\ \subseteq\ \Om,\\
	ST&:=&\{\sigma\gamma\mid \sigma\in S,\ \gamma\in T\}\ \subseteq\ \Ga.
	\end{eqnarray*}
We call $\ga.X$ a \emph{translate} of $X$.  The group action and the group multiplication take precedence over all other set operations; for example, $S.X\cap Y$ means $(S.X)\cap Y$.
The identity of the group $\Ga$ is denoted by~$e$. 
Also, we write
$S^{-1}:=\{\ga^{-1}\mid \ga\in S\}$ and call the set $S$ \emph{symmetric} if $S^{-1}=S$. 

By a \emph{multiset} $Q\subset \Ga$ we mean a function $Q:\Ga\to \I N$ with $Q(\ga)$ encoding the multiplicity of $\ga$ in $Q$. It is \emph{finite} if $|Q|:=\sum_{\ga\in\Ga} Q(\ga)$ is finite. For $f:\Ga\to\I R$, we define $\sum_{\ga\in Q} f(\ga):=\sum_{\ga\in\Ga} Q(\ga)\, f(\ga)$.

The closure of $X\subset \Om$ in the topological space $(\Om,\tau)$ is denoted by $\O X$. The \emph{complement} of $X$ is $X^c:=\Om\setminus X$. The \emph{indicator function} $\I 1_X$ of $X$ assumes value 1 on $X$ and value 0 on $\Om\setminus X$.

 Two sets $A,B\subset\Om$ \emph{cover each other}
if there is a finite set $S\subset \Ga$ with $S.A\supset B$ and $S.B\supset A$.
When we talk about equidecompositions for $\I R^n$, $\I S^{n}$, or $\I H^n$ without specifying the group, we mean by default the group of orientation-preserving isometries. 

Also, recall these families of subsets of $\Om$: $\C B$ (Borel), $\C B_\mu$ (measurable), $\CU$ (having compact closure and covering a non-empty open set) and $\C T$ (having the property of Baire).


\newcommand{\aee}{{a.e.}}

Let $G$ be a \emph{(bipartite) graph}, by which we mean a triple $(V_1,V_2,E)$, where $V_1$ and $V_2$ are sets (that are called the \emph{parts} of $G$) and $E$ (called the \emph{edge set} of $G$) is a subset of $V_1\times V_2$. 

A \emph{matching} in $G$ is a subset $M$ of the edge set $E$ such that for every $x$ in $V_1$ (resp.\ $V_2$) there is at most one vertex $y$ with $(x,y)\in M$ (resp.\ $(y,x)\in M$). 
For $i=1,2$,  the projection $\pi_i(M)$ consists of \emph{matched} vertices in $V_i$. It will be sometimes convenient to view a matching as a partial bijection; then $\pi_1(M)$ and $\pi_2(M)$ are just the domain and the range of~$M$.
The matching $M$ is called \emph{perfect} if $\pi_1(M)=V_1$ and $\pi_2(M)=V_2$, that is, $M$ as a function is a bijection from $V_1$ to~$V_2$.

For $X\subset V_1$ and $Y\subset V_2$ their
\emph{neighbourhoods} are respectively
\begin{eqnarray*}
N(X)&:=&\{y\in V_{2}\mid \exists\,x\in X\ (x,y)\in E\},\\ 
N(Y)&:=&\{x\in V_1\mid \exists\,y\in Y\ (x,y)\in E\}.
\end{eqnarray*}
 The \emph{degree} of a vertex $x$ is $\deg(x):=|N(\{x\})|$. We call $G$ \emph{locally finite}
 if the degree of each vertex is finite.

There may be some ambiguity when $V_1\cap V_2\not=\emptyset$. 
There are two, essentially equivalent, ways to deal with this formally.
The first one is to work  with the action $\tilde a:\Ga\oplus C_2\actson \Om\times C_2$ instead, where $C_2:=(\{-1,1\},\times)$ is the cyclic group with two elements,
 $\Ga$ acts on each copy of $\Om$ the same way as before while the non-identity element of $C_2$ swaps these two copies.
Then we can
replace $V_i$ by $V_i\times\{(-1)^i\}$, thus making the parts disjoint. However, then we have to make (routine) verifications  that our claimed results, when proved for~$\tilde a$, transfer to the original action~$a$.
Alternatively, we can operate with 
the \emph{unordered} graph 
\beq\label{eq:tilde}
 \tilde G:=((V_1\times\{-1\})\sqcup (V_2\times\{1\}),\tilde E),\quad \mbox{where }\tilde E:=\{\,\{(x,-1),(y,1)\}\mid (x,y)\in E\},
\eeq
which carries the same information as $G$. 
For example, instead of the degree of $x\in V_i$ we should have, strictly speaking, defined the degree of $(x,(-1)^i)$, etc.
Since the meaning will usually be clear from the context, we will be working mostly with~$G$, switching to $\tilde G$
occasionally.

We call a bipartite graph $G=(V_1,V_2,E)$ \emph{Borel} if $V_1,V_2\subseteq \Om$ and $E\subset \Om^2$ are all Borel. One way to generate Borel subsets of $\Om^2$ that is relevant to this paper is as follows.
A \textit{Borel arrow} is a pair $(U,\ga)$, where $U\subset \Om$ is Borel and  $\ga\in \Ga$.
Given a countable set of Borel arrows $\C A = \{(U_i,\ga_i)\mid i\in I\}$, let
 \beq\label{eq:EArrows}
 E(\C A):=\cup_{i\in I} \{(x,\ga_i.x)\mid x\in U_i\}\subseteq \Om^2.
 \eeq
 This set is Borel (as the countable union of the graphs of Borel partial functions). A special case is when $\C A=\{(\Omega,\ga)\mid \ga\in S\}$ for some countable $S\subset \Ga$ (that is, each $U_i$ is equal to $\Om$); then we denote
 \beq\label{eq:ES}
 E_S:=E(\C A)=\cup_{\ga\in S} \{(x,\ga.x)\mid x\in\Om\}\subset \Om^2,
 \eeq
 which is just the union  over $\ga\in S$ of the graphs of the functions $a(\ga,\cdot):\Om\to\Om$. 


We will be using the Lusin-Novikov Uniformisation Theorem (see e.g.~\cite[Theorem~18.10]{Kechris:cdst}) a number of times, often without explicitly mentioning it. We need only the weaker form of the theorem which states that, for Polish spaces, a continuous countably-to-one image of a Borel set is Borel. For example, one of its consequences is that for every Borel graph $G=(V_1,V_2,E)$ with countable vertex degrees and every Borel $X\subseteq V_i$ its neighbourhood $N(X)$
is Borel. Indeed, if e.g.\ $i=1$ then $N(X)=\pi_2((X\times V_{2})\cap E)$ is the countable-to-one image of a Borel set 
under the (continuous) projection~$\pi_2$. Another useful consequence of the Uniformisation Theorem is as follows.

\begin{lemma}\label{lm:Borel} Under Assumptions~\ref{as:1}, let $S\subset \Ga$ be a countable subset and let $E'\subset E_S$ be Borel. Then there are Borel sets $U_\ga\subset \Om$ for $\ga\in S$ such that
$\C A:=\{(U_\ga,\ga)\mid\ga\in S\}$ \emph{bijectively generates} $E'$ (meaning that $E'=E(\C A)$ and for every $(x,y)\in E'$ there is exactly one  $\ga\in S$  with $x\in U_\ga$ and $y=\ga.x$).
\end{lemma}
\begin{proof} In brief, we have to pick in a Borel way, for every edge of $E'$,  exactly one element of $S$ that generates this edge. Such a selection can be obtained by taking a maximal Borel independent set, which exists by a result of Kechris, Solecki and Todorcevic~\cite[Proposition~4.2]{KechrisSoleckiTodorcevic99}, in the (non-bipartite) Borel graph with vertex set $\{((x,y),\gamma) \in E'\times S\mid \gamma.x=y\}$ whose two vertices are connected if they correspond to the same edge of~$E'$. 

For completeness, we include a more direct proof.

Let $\preceq$ be a linear ordering of $S$ coming from some injection of $S$ into~$\I N$. We construct the required
sets $U_\ga$ one by one by taking the maximal possible set given the previous sets. 
Namely, we inductively define $U_\ga$ for $\ga\in S$ as
	$$
	U_\ga :=\{x\in \Om\mid (x,\ga.x)\in E'\}\setminus (\cup_{\beta\prec \ga} \{x\in U_\beta\mid \ga.x=\beta.x\}).
	$$
	The obtained sets can be shown to be Borel by induction on~$\ga\in S$. 
	For example, each auxiliary set 
	$$
	\{x\in \Om\mid \ga.x=\beta.x\}=\pi_1(\,\{(x,y)\in\Om^2\mid y=\gamma.x\}\cap \{(x,y)\in\Om^2\mid y=\beta.x\}\,)
	$$
	is Borel by the Lusin-Novikov Uniformisation Theorem.
	
	By definition, $E'\supset E(\C A)$, where $\C A:=\{(U_\ga,\ga)\mid \ga\in S\}$. Also, for every $(x,y)\in E'$ there is exactly one $\ga\in S$ with $x\in U_\ga$ and $y=\gamma.x$, namely, the smallest element of $\{\ga\in S\mid y=\ga.x\}$.
	(Note that this set is non-empty since $E'\subset E_S$.) It follows that $\C A$ has all required properties.
\end{proof}



The following result points out a well-known connection between equidecompositions and graph matchings.

\begin{lemma}\label{lm:MEQ} Under Assumption~\ref{as:1}, let $S\subseteq \Ga$ be a finite set and $M$ be a matching in the bipartite graph $(\Om,\Om,E_S)$. Let $A:=\pi_1(M)$ and $B:=\pi_2(M)$. 
	Then the following statements hold.
\begin{enumerate}[(i),nosep]
 \item\label{it:MEQ1} The sets $A,B\subset \Om$ are equidecomposable using group elements from~$S$ only. 
 \item\label{it:MEQ2} If  $M\subseteq \Om^2$ is Borel,
 then $A$ and $B$ are Borel equidecomposable  using group elements from~$S$ only.
 	\end{enumerate}
	\end{lemma}

\bpf 
For every $\ga\in S$, let $A_\ga$ consist of those $x\in A$ such that
$(x,\ga.x)\in M$ and $\ga$ is the smallest element of $S$ with this property (under a fixed ordering of~$S$). Since $M$ is a matching, the sets $A_\ga$ partition $A$ and their translates $\ga.A_\ga$ partition $B$. Thus the sets $A$ and $B$ are equidecomposable using elements from $S$ only. 

If, moreover, the matching $M$ is Borel, then all pieces $A_\ga$ are Borel (which can argued similarly as in the proof of Lemma~\ref{lm:Borel}).
\epf

Let us call the action $\Ga\actson\Om$ \emph{minimal} if there is no closed subset $X\subset \Omega$ such that $X$ is \emph{proper} (i.e., $X\not=\emptyset$ and $X\not=\Omega$) and $\Ga.X=X$. 

\begin{lemma}\label{lm:minimal} Under Assumption~\ref{as:1}, suppose that the action $\Ga\actson\Om$ is paradoxical. Then it is minimal. Also, if $A\in \CU$ is equidecomposable to some set $B$, then $B\in \CU$.
Also, any two sets $A,B\in\CU$ cover each other.\end{lemma}

\begin{proof} Suppose that a closed proper subset $X\subset \Om$ satisfies $\Ga.X= X$. Pick some
	$x\in X$ and $y\in \Om\setminus X$. By the local compactness of $\Om$, choose open sets $U\ni x$
	and $W\ni y$ with compact closures where, additionally, we can assume that $W\cap X=\emptyset$. Then $\Ga.W$, as a subset of the invariant set $\Om\setminus X$, does not contain $x\in U$. Thus the sets $U,W\in\CU$ are not
	equidecomposable, contradicting the paradoxicality of the action.

Next, suppose that $B\subset \Om$ is equidecomposable to $A\in\CU$. The set $B$ has to cover $A$ and, by the transitivity
of the covering relation, $B$ also covers some non-empty open set. On the other hand, $B$ is covered by finitely many
copies of $A$. Since the closure of $A$ is compact and the action of each element of $\Ga$ preserves this property by Definition~\ref{de:paradox}\ref{de:paradox2}, the closure of $B$ 
can be covered by finitely many compact sets and so it is compact itself. Thus $B\in\CU$ as required.

Finally, let $A,B\in\CU$ be arbitrary. By the definition of~$\CU$, the sets $A$ and $B$
cover some non-empty open sets $U$ and $W$  respectively. By shrinking $U$ and $W$, we can additionally assume that they have compact closures. By the paradoxicality of the action,
$A\cup U$ is equidecomposable to $B\cup W$, from which it easily follows that $A$ and $B$ cover each other.
\end{proof}

Let Assumption~\ref{as:mu} apply to the rest of this section. In particular, the action $\Ga\actson\Om$ preserves the measure $\mu$ on $(\Om,\C B)$. The following reformulation of the measure preservation property will be useful to us.



\begin{lemma}\label{lm:mp} Under Assumptions~\ref{as:1} and~\ref{as:mu}, 
let 
$\Lambda$ be a countable subgroup of $\Ga$,
let $C\subset \Om$ be Borel, and let $\psi:C \to\Om$ be a Borel injective map
	such that, for every $x\in C$, we have $\psi(x)\in \Lambda.\{x\}$.
Then 
	$\psi(C)$ is a Borel set and 
	$\mu(\psi(C))=\mu(C)$. 
\end{lemma}
\begin{proof} 
	Of course, the first claim (that $\psi(C)$ is Borel) follows from the Lusin-Novikov Uniformisation Theorem; we also get it as a by-product of our proof of the second claim.
		
	The graph $E':=\{(x,\psi(x))\mid x\in C\}$ of $\psi$ is a Borel subset of $E_\Lambda$. By Lemma~\ref{lm:Borel},
	we can find Borel arrows $(C_\beta,\beta)$, $\beta\in \Lambda$, that bijectively generate~$E'$. The sets $C_\beta$ partition~$C$ and their images $\psi(C_\beta)=\beta.C_\beta$ are Borel sets that partition~$\psi(C)$ by the injectivity of the function~$\psi$. 
	Thus $\psi(C)=\sqcup_{\be\in\Lambda} \be.C_\be$ is a Borel set
	of measure $\sum_{\beta\in\Lambda} \mu(\be.C_\be)=\sum_{\beta\in\Lambda} \mu(C_\be)=\mu(C)$, as required.
\end{proof}

We say that some property holds \emph{almost everywhere} (\emph{a.e.}\ for short) if the set of $x\in \Om$ where it fails is a null set with respect to the measure~$\mu$. For example, two sets $A,B\subset\Om$ \emph{essentially cover each other} if there is a finite $S\subset \Ga$ with $S.A\supset B$ and $S.B\supset A$ a.e. 

Recall that a measurable set $C \subset\Om$ is a \textit{domain of expansion}  if  $0<\mu(C)<\infty$ and for every real $\eta>0$ there is a finite set  $R\subset \Gamma$ such that for all measurable
	sets $Y\subset C$ we have
\beq\label{eq:DoE}
\mu (R.Y \cap C)\ge \min\left((1-\eta)\,\mu(C),\,\frac{\mu(Y)}{\eta}\right).
\eeq
Such a set $R$ will be called \textit{$\eta$-expanding for $C$}.
	Observe that it is enough to check~\eqref{eq:DoE} just for Borel subsets of~$C$. (Indeed, every $\sigma$-finite Borel measure on a Polish space is regular, see e.g.~\cite[Proposition 8.1.12]{Cohn13mt}, so for every measurable $Y$ there is a null set $N$ such that $Y\setminus N$ is Borel, in fact, an $F_\sigma$-set.) Also, $C'\subset \Om$ with $C'=C$ a.e.\ is a domain of expansion if and only if $C$ is.

\begin{lemma}\label{lm:Cover} Under Assumptions~\ref{as:1} and~\ref{as:mu}, let $A,B\subset \Om$ be two measurable sets that essentially cover each other. Then $A$ is a domain of expansion if and only if $B$ is a domain of expansion.
\end{lemma}
\begin{proof} 
	
	Let us assume, for example, that $A$ is a domain of expansion. Let $T\subset \Ga$ be a finite set such that $A\subset T.B$~and $B\subset T^{-1}.A$~a.e. Let $t:=|T|$. By the invariance of the measure, we have that
	$0<\mu(A)/t\le \mu(B)\le t\,\mu(A)<\infty$.

	Let $\e>0$ be arbitrary.  Let $\eta:=\e/t^2$ and%
	\hide{Fix sufficiently small $\eta>0$ so that
	$$
	\max\left(\eta\,\frac{t\mu(A)}{\mu(B)},\eta t^2\right)\le \e.
	$$}
	let $S$ be an $\eta$-expanding set for $A$. 
	
	We claim that $T^{-1}S T$ is an $\eps$-expanding set for $B$. Indeed, let $Y\subset B$ be a measurable set. We have that $Y\subset T^{-1}.A$ \aee, so there exists $\ga\in T$ such that  $\mu(\ga.Y\cap A)=\mu(Y\cap \ga^{-1}.A)\ge \mu(Y)/t$. Since $S$ is $\eta$-expanding for $A$, we have
	$$
	\mu(S T.Y\cap A) \ge \min\left((1-\eta)\,\mu(A),\,\frac{\mu(T.Y\cap A)}{\eta}\right)\ge \min\left((1-\eta)\,\mu(A),\,\frac{\mu(Y)}{\eta t}\right).
	$$
	If $\mu(A\setminus S T.Y) \le \eta\,\mu(A)$, then we have by the choice of $T$ that
	$$
	\mu(B\setminus T^{-1}S T. Y) \le t\, \mu(A\setminus S T.Y)\le t\,\eta\,\mu(A)= \frac{\e\, \mu(A)}{t}\le \e\,\mu(B).
	$$
	\hide{If $\mu(S T.Y\cap A) \ge (1-\eta)\,\mu(A)$ then, since $B\subset T^{-1}.A$ a.e., we have 
		$$
		\mu(T^{-1}S T. Y\cap B) \ge \mu(B) -t\,\eta\,\mu(A)= \left(1-\frac{t\eta\mu(A)}{\mu(B)}\right)\,\mu(B)\ge (1-\e)\,\mu(B),
		$$
		as desired.}%
	Otherwise, we have $\mu(S T.Y\cap A) \ge \mu(Y)/(\eta t)$. Using that $A\subset T.B$ a.e.,\ we deduce that for some $\ga\in T$ we have $
	\mu(S T.Y\cap \ga.B)\ge \mu(Y)/(\eta t^2)$, and so 
	$$
	\mu(T^{-1}S T.Y\cap B)\ge\mu(S T.Y\cap \ga.B) \ge \mu(Y)/(\eta t^2)= \mu(Y)/\e.
	$$ Thus, $T^{-1}ST$ is $\e$-expanding for $B$, as desired.\end{proof}

\begin{remark}\label{re:DoE}\rm Recall that we defined the action $\Ga\actson\Om$ to be \emph{expanding} if every $C\in\C B_\mu\cap\CU$ is a domain of expansion. It follows from Lemmas~\ref{lm:minimal} and~\ref{lm:Cover} that, for paradoxical actions,
	 it is enough to check
	 just one arbitrary member of $\C B_\mu\cap\CU$.\end{remark}

We call a bipartite Borel graph $G=(A,B,E)$ a \emph{bipartite $c$-expander} if $0<\mu(A)=\mu(B)<\infty$ and, for every measurable (equivalently, Borel) subset $Y$ of $A$ or $B$, it holds that
%
\beq\label{eq-hall-condition}
\mu(N(Y))>\frac12\,\mu(A)\quad\mbox{or}\quad\mu(N(Y)) \ge (1+c)\,\mu(Y).
\eeq
 We call $G$ a \emph{bipartite expander} if it is a \emph{bipartite $c$-expander} for some $c>0$.
For example, if $C\subset \Om$ is Borel with $0<\mu(C)<\infty$ and $S\subset\Ga$ is an $\eta$-expanding set for $C$ with $0<\eta<1/2$, then the graph $(C,C,E_S\cap C^2)$ is a bipartite $((1-\eta)/\eta)$-expander.

\section{Proof of Theorem \ref{th:suff}}

\label{sec-matchings-in-graphings}

\subsection{Augmenting paths}

First, we need to define some graph-theoretic concepts adopted to our setting. 
Let $G=(A,B,E)$ be a bipartite graph and let $M\subset E$ be a matching in~$G$.

An \emph{alternating path (starting at $A$)} is a non-empty sequence of points $P=(x_0,\ldots,x_\ell)$ such that
\begin{enumerate}[(i), nosep]
\item $x_0\in A\setminus \pi_1(M)$;
\item \label{it:odd}
 the odd-indexed vertices $x_1,x_3,\ldots\in B$ are  pairwise distinct;
\item 
for every $i\in [\ell]$,  we have that $(x_{i-1},x_i)\in E\setminus M$ if $i$ is odd and $(x_i,x_{i-1})\in M$ if $i$ is even.
\end{enumerate} 

Since $M$ is a matching and $x_0\in A$ is unmatched, Item~\ref{it:odd} implies that the even-indexed vertices $x_0,x_2,\ldots\in A$ are also pairwise distinct. 
 This definition works also when $A\cap B\not=\emptyset$, since the parity of the position in $P$ determines the part a vertex is assigned to. If we are to work with the unordered graph $\tilde G$ as defined in~\eqref{eq:tilde}, then the corresponding definition is that 
$$\tilde P:=((x_0,-1),(x_1,1),\ldots,(x_\ell,(-1)^{\ell+1}))$$ is a path in $\tilde G$ that starts with an unmatched vertex 
and whose edges alternate between $\tilde{E}\setminus\tilde{M}$ and $\tilde{M}$. The \emph{length} of $P$ is $\ell$, the number of edges.

An \emph{augmenting path (starting at $A$)} is an alternating path $P=(x_0,\ldots,x_\ell)$ of odd length $\ell\ge 1$ such that $x_\ell\in B\setminus \pi_2(M)$.
The \emph{augmentation} of $M$ along $P$
is the matching $M':=M\bigtriangleup E(P)$ which is obtained by taking the symmetric difference between $M$ and $$
E(P):=\{(x_0,x_1),(x_2,x_1),(x_2,x_3),\ldots,(x_{\ell-1},x_\ell)\}.
$$
In order words, we modify $M$ by including $(x_{i-1},x_i)$ for all odd $i\in [\ell]$ and removing $(x_i,x_{i-1})$ for all even $i\in [\ell]$. Note that the augmented matching $M'$ covers two new vertices: $\pi_1(M')=\pi_1(M)\sqcup\{x_0\}$ and $\pi_2(M')=\pi_2(M)\sqcup\{x_\ell\}$.

Suppose that $E\subset E_R$ for some fixed countable set $R\subset \Ga$, where $E_R$ is defined by~\eqref{eq:ES}. Then a \textit{Borel augmenting family} is
a tuple $(U,\beta_1,\ldots,\beta_\ell)$, where $U$ is a Borel subset of $A$ and
$\beta_1,\ldots,\beta_\ell$ are elements of $R$ (with repetitions allowed) so that (i) for every $x\in U$ the sequence 
 \begin{equation}\label{eq:AugFamPath}
 P_{x,\be_1,\ldots,\be_\ell}:=(x,\, \be_1.x,\, \be_2\be_1.x,\, \ldots, \, \be_\ell\ldots\be_2\be_1.x)
 \end{equation}
is an augmenting path for $M$; (ii) for distinct $x,y\in U$ the corresponding augmenting paths $\tilde P_{x,\be_1,\ldots,\be_\ell}$ and $\tilde P_{y,\be_1,\ldots,\be_\ell}$ in $\tilde G$ are vertex-disjoint. Informally speaking, a Borel augmenting family is a Borel collection of vertex-disjoint augmenting paths in~$\tilde G$. The \textit{length} of such a family is $\ell$, the number of edges in each path.
The \emph{augmentation} $M'$ of $M$ along a Borel augmenting family $(U,\beta_1,\ldots,\beta_\ell)$ is obtained from $M$ by augmenting along the paths in~\eqref{eq:AugFamPath} for all $x\in U$. (Since the paths are vertex-disjoint in $\tilde G$, all these augmentations can be done in parallel.) Note that $M'$ is a matching, $\pi_1(M') = \pi_1(M)\sqcup  U$ and $\pi_2(M')=\pi_2(M') \sqcup (\beta_\ell\ldots\beta_1.U)$. Also, $M'$ is Borel if $M$ is.

By swapping the roles of $A$ and $B$, we can define in the obvious way an \emph{alternating path starting at $B$}. Note that there is no need to define an augmenting path starting at $B$: by reversing the order of vertices we can consider augmenting paths starting at~$A$ only.

\subsection{Proof of the result of Lyons and Nazarov on measurable matchings}\label{subsec-matchings}

In this section we provide all details of the result of Lyons and Nazarov~\cite[Remark 2.6]{LyonsNazarov11} (stated as Theorem~\ref{th:LN} here) which gives a  sufficient condition for the existence of a Borel a.e.-perfect matching. We need the following result first, which does not use measure and, in fact, was proved in~\cite{ElekLippner10} for arbitrary (not necessarily bipartite) Borel graphs.

\begin{lemma}[Elek and Lippner~\cite{ElekLippner10}]\label{lm:EL}
Under Assumption~\ref{as:1}, let $G=(A,B,E)$ be a Borel locally finite bipartite graph with $E\subset E_R$
for some countable $R\subset \Ga$. (In particular, $A$ and $B$ are Borel subsets of $\Om$.)
Then there are Borel matchings
$M_0=\emptyset$, $M_1,M_2,\ldots  \subset E$
such that
\begin{enumerate}[(i),nosep]
\item\label{it:EL1} for every $i\ge 1$, the matching $M_i$ admits no augmenting path of length at most $2i-1$; 
\item\label{it:EL2} for every $i\ge 0$, there is a countable sequence of Borel matchings $K_0,K_1,\ldots\subset E$ such that (a) $K_0=M_{i}$; (b) for every $j\ge 1$, $K_j$ is the augmentation of $K_{j-1}$ along some Borel augmenting family of length at most $2i+1$; (c) every $(x,y)\in E$ eventually belongs to either all or none of the matchings $K_j$; (d) $M_{i+1}=\cup_{j=1}^\infty\cap_{t=j}^\infty K_t$. 
 \end{enumerate}
\end{lemma}

\begin{proof} The first item is \cite[Proposition 1.1]{ElekLippner10}. The second item follows directly from the proof of \cite[Proposition 1.1]{ElekLippner10}. 
In brief, the proof, when adopted to our setting, proceeds as follows. 

Suppose that $i\ge 0$ and we have already constructed matchings $M_0,\ldots,M_{i}$. By a result of Kechris et al~\cite{KechrisSoleckiTodorcevic99}, there is a Borel vertex colouring $\phi:A\to \I N$  such that $\phi(x)\not=\phi(y)$ for every two distinct vertices $x,y\in A$ with $(x,-1),(y,-1)\in \tilde A$ being at distance at most $4i+2$ in~$\tilde G$. Fix some sequence $(\V x_j)_{j=1}^\infty$ where each $\V x_j$ belongs to $X:=\cup_{t=1}^{i+1} R^{2t-1}\times \I N$ so that each element of $X$ appears as $\V x_j$ for infinitely many choices of $j$. 

Define $K_0:=M_{i}$. Suppose that $j\ge 1$ and we have already defined $K_{j-1}$. Let $\V x_j=((\beta_1,\ldots,\beta_{2t-1}),c)$ with $t\in[i+1]$. Let $U$ consist of those $x\in A$ such that $P_x:=P_{x,\beta_1,\ldots,\beta_{2t-1}}$, as defined in~\eqref{eq:AugFamPath}, is an augmenting path for $K_{j-1}$ and $\phi(x)=c$. The set $U$ is Borel. Also, for distinct $x,y\in U$, the paths $\tilde P_x$ and $\tilde P_y$ are vertex disjoint for otherwise 
$(x,-1)$ and $(y,-1)$ are at distance at most $4i+2$ in $\tilde G$ and satisfy $\phi(x)=\phi(y)=c$, contradicting the choice of the colouring $\phi$. 
 Thus $(U,\beta_1,\dots,\beta_{2t-1})$ is a Borel augmenting family for~$K_{j-1}$. Define $K_j$ to be the augmentation of $K_{j-1}$ along $(U,\be_1,\ldots,\be_{2t-1})$. Increase $j$ and repeat.

Each augmentation as above that flips any given edge $(x,y)\in E$ strictly decreases the set of currently unmatched vertices in $\tilde G$ at distance at most $2i$ from the edge. This set is finite, e.g.\ by K\"onig's Infinity Lemma. Thus $(x,y)$ can be flipped only finitely many times, that is, it eventually belongs to all or none of the matchings $K_j$. It follows that $M_{i+1}:=\cup_{j=1}^\infty\cap_{t=j}^\infty K_t$ is a Borel matching that does not admit any augmenting path of length at most $2i+1$ as every potential path was considered for augmentation for infinitely many values of $j$.\end{proof}

\begin{remark}\rm A result of Hopcroft and Karp~\cite{HopcroftKarp73} states that if a matching $K$ admits no augmenting path of length at most $2i-1$ and we flip an augmenting path of length $2i+1$ (that is, shortest possible), then no new augmenting paths of length at most $2i+1$ appear. Thus, when we construct $M_{i+1}$ from $M_{i}$ in the proof of Lemma~\ref{lm:EL}, it is enough in fact to take for $X$ just some enumeration of $R^{2i+1}\times \I N$.\end{remark}

\begin{theorem}[Lyons and Nazarov \cite{LyonsNazarov11}]
\label{th:LN} In addition to the assumptions of Lemma~\ref{lm:EL}, let Assumption~\ref{as:mu} apply and let the graph $G=(A,B,E)$ be a bipartite expander, that is, $\mu(A)=\mu(B)\in (0,\infty)$ and~\eqref{eq-hall-condition} holds for some $c>0$.
Then $G$ has a Borel a.e.-perfect matching $M\subset E$.
\end{theorem}

\bpf
Let $M_0,M_1,\ldots$ be the sequence of Borel matchings returned by Lemma~\ref{lm:EL}. Let $M$ be defined by
\begin{equation}\label{eq:M}
M:=\cup_{j=0}^\infty \cap_{i=j}^\infty M_i,
\end{equation}
that is, $M$ consists of edges that are eventually included in every $M_i$. Clearly, $M$ is a Borel matching. Thus it remains to show that the set of vertices not matched by $M$ has measure 0. As we will see, this will be a consequence of Claims~\ref{claim-bound} and~\ref{cl:muXi} via a Borel-Cantelli-type argument.

For $i\in \I N$, let $X^i := A\setminus \pi_1(M_i)$ and $Y^i := B\setminus \pi_2(M_i)$ be the subsets of $A$ and $B$ of vertices not matched by $M_i$. 
By Lemma~\ref{lm:mp}, the sets $\pi_1(M)$ and $\pi_2(M)$ (and thus the sets $X^i$ and $Y^i$) have the same measure. 

Note that the set $\pi_1(M_{i}\bigtriangleup M_{i+1})$ consists of those $x\in A$ on which $M_{i}$ and $M_{i+1}$ differ as partial functions (in particular, it includes those $x\in A$ for which exactly one of these functions is defined).

\begin{claim}\label{claim-bound} For every $i\in\I N$, it holds that $
 \mu(\pi_1(M_{i}\bigtriangleup M_{i+1}))\le (i+1)\,\mu(X^i).
 $
\end{claim}
\vspace{5pt}\noindent\textit{Proof of Claim.} In brief, when we construct $M_{i+1}$ from $M_i$, each individual augmentation that matches an extra point of $A$ changes the current matching at at most $i+1$ elements of $A$ so the claim follows from the invariance of the measure~$\mu$.

Let us give a full proof. Given $i$, let $K_0=M_i, K_1, K_2, \ldots\subset E$ be the 
Borel matchings returned by the second part of Lemma~\ref{lm:EL}. 


Take $j\ge 1$ and let $(U_j,\gamma_1,\ldots,\gamma_{2\ell+1})$ with $0\le \ell\le i$ be the Borel augmenting family
for $K_{j-1}$ that augments it into $K_{j}$. Since each augmentation increases the set of matched vertices and thus $\pi_1(K_{j-1})\supset \pi_1(M_i)$, we have that $U_j\subset A\setminus \pi_1(K_{j-1})$ is a subset of
$A\setminus\pi_1(M_i)= X^i$. For $t\in\{0,\ldots,\ell\}$,
let $\psi_t$ map $x\in U_j$ to $\ga_{2t}\ldots\ga_1.x$.
Then $\psi_0,\ldots,\psi_\ell$ are Borel injections 
 whose images 
partition $\pi_1(K_{j}\bigtriangleup K_{j-1})$. By Lemma \ref{lm:mp}, we have
$$
\mu (\pi_1(K_{j-1} \bigtriangleup K_{j}))  = \sum_{t=0}^\ell \mu(\psi_t(U_j))
=(\ell+1) \mu(U_j)\le  (i+1)\,\mu(U_j).
$$

Again, note that augmentations can only increase the set of matched vertices. Thus, for every $h>j$, we have that $U_h\subseteq A\setminus \pi_1(K_{h-1})$ is disjoint from
$U_j\subseteq \pi_1(K_j)\subseteq \pi_1(K_{h-1})$. Also, $M_{i}\bigtriangleup M_{i+1}\subseteq \cup_{j=1}^\infty (K_j \bigtriangleup K_{j-i})$. Finally, since every $U_j$ is a subset of $X^i$, we conclude that
$$
\mu(\pi_1(M_{i}\bigtriangleup M_{i+1}))\le \sum_{j=1}^\infty \mu (\pi_1(K_{j-1} \bigtriangleup K_{j}))  \le (i+1) \sum_{j=1}^\infty \mu(U_j)\le  (i+1)\,\mu(X^i).
$$
 The claim is proved.\qed

Fix $i\ge 1$. 
For integer $j\ge 0$, let $X_j^i$ consist of the end-points of alternating paths for $M_i$ (that start at $A$) whose length is at most $j$ and has the same parity as~$j$. Since $i$ is fixed (until~\eqref{eq:UnfixI}), we abbreviate $X_j:=X_j^i$.
Since, for each $x\in X^i$, the length-0 path $(x)$ is alternating, we have in particular that $X_0=X^i$. 
As the graph is bipartite, it holds that $X_0\subset X_2\subset\ldots\subset A$ and $X_1\subset X_3\subset\ldots \subset B$. Define $X_0':=X_0$, $X_1':=X_1$ and, for $j\ge 2$, $X_j':=X_{j}\setminus X_{j-2}$. By definition, the sets
$X_0',X_2',\ldots$ (resp.\ $X_1',X_3',\ldots$) are pairwise disjoint.

\begin{claim}\label{cl:bijection} For every $j\in [i]$, the matching $M_i$ gives a bijection between
$X_{2j}'$ and $X_{2j-1}'$.\end{claim}
\begin{proof}[Proof of Claim]
Let $x$ be any element of  $X_{2j}'\subset A$. By definition, there is an alternating path $(x_0,\ldots,x_{2j})$ with $x_0\in X_0$ and $x_{2j}=x$. This path has even length, so  $(x,y)\in M_i$, where $y:=x_{2j-1}$. The truncated alternating path $(x_0,\ldots,x_{2j-1})$ shows that $y\in X_{2j-1}$. Suppose, on the contrary to the claim, that $y\not\in X_{2j-1}'$. Then $y\in X_{2j-3}$. Let this be witnessed by some alternating path~$Q$ of length at most $2j-3$. The odd-length path $Q$ ends with an edge in~${E}\setminus {M_i}$. Either $Q$ already contains $x$ or we can extend $Q$ by adding $x$, in each case obtaining a contradiction to $x\not\in X_{2j-2}$.

Conversely, take any $y$ in $X_{2j-1}'\subset B$. Fix an alternating path $P=(x_0,\ldots,x_{2j-1})$ with $x_0\in X_0$ and $x_{2j-1}=y$. The last edge of this path of odd length is in ${E}\setminus {M_i}$ and, since $M_i$ admits no augmenting path of length $2j-1\le 2i-1$, the vertex $y$ has to be matched. Let $(x,y)\in M_i$. The vertex $x\in \pi_1(M_i)$ cannot belong to $X_0=A\setminus \pi_1(M_i)$. Also, $x$ cannot belong to $X_{2t}'$ for some $t\in[j-1]$: otherwise an even-length alternating path $Q$ witnessing this has to end with the pair $(x,y)\in M_i$ (as $y$ is the unique $M_i$-match of $x$) and a truncation of $Q$ shows that $y\in X_{2t-1}\subset X_{2j-3}$, a contradiction.
In particular, $x$ cannot occur in $P$ as an even-indexed vertex. Thus we can add $x$ to $P$, obtaining an alternating path which shows that $x\in X_{2j}$. We have already argued that $x\not\in X_{2j-2}$.
Thus we conclude that $x\in X_{2j}'$, as desired.\end{proof}

Recall that $X_j$ is a subset of $A$ (resp.\ $B$) if $j$ is even (resp.\ odd).

\begin{claim}\label{cl:N} For every $j\in [i]$, we have $X_{2j+1} = N(X_{2j})$.
\end{claim}
\begin{proof}[Proof of Claim.]
The inclusion $X_{2j+1}\subset N(X_{2j})$ is clear. For the other direction, take arbitrary $x\in X_{2j}$ and $y\in N(\{x\})\subset B$. Pick an alternating path $P$ of length at most $2j$ starting at $X_0$ and ending in $x$. Suppose that $y$ does not belong to $P$, as otherwise a truncation of $P$ shows that $y$ is in $X_{2j-1}\subset X_{2j+1}$, giving the required. In particular, we have that $(x,y)\not \in M_i$ as otherwise $y$ precedes $x$ on $P$. Thus we can extend $P$ by adding $y$. This shows that $y\in X_{2j+1}$, as desired.\end{proof}

Claims~\ref{cl:bijection} and~\ref{cl:N} show by induction on $j\in[i]$ that the set $X_j'$ is Borel. (In fact, this is true for every $j$ since $X_j'$ is defined by a local rule; however the stated range of $j$ will suffice for our purposes.)

\begin{claim}\label{claim-growth} For every $j\in [i]$ we have that
$\mu(X_{2j})\ge \mu(X_{2j-1})$ while $\mu(X_{2j-1})$
is strictly larger than $\frac12\,\mu(B)$ or at least $(1+{c})\,\mu(X_{2j-2})$.
\end{claim}
\begin{proof}[Proof of Claim] 
Claim~\ref{cl:bijection} implies by induction on $j\in[i]$ that the matching $M_i$ gives a bijection between $X_{2j}\setminus X_0=X_2'\sqcup \ldots\sqcup X_{2j}'$ and $X_{2j-1}=X_1'\sqcup\ldots\sqcup X_{2j-1}'$. 
Thus we have by Lemma~\ref{lm:mp} that $\mu(X_{2j}\setminus X_0) = \mu(X_{2j-1})$,
giving the first inequality.

The stated lower bound on $\mu(X_{2j-1})$ is a direct consequence of Claim~\ref{cl:N} and the bipartite expansion property of $G$ assumed by the theorem.
\end{proof}

For $j\in\I N$, define $Y_j$ to consist of the end-points of alternating paths which start in $Y_0:=Y^i$ whose length is at most $j$ and has the same parity as~$j$. (This becomes the same definition as that of $X_j$ when we swap the roles of the sets $A$ and $B$.)
By symmetry, Claim~\ref{claim-growth} also holds when we swap $A$ and $B$, and replace each $X_t$ by $Y_t$.


\begin{claim}\label{cl:Xi-1Yi} The sets $X_{i-1}$ and $Y_{i}$ are disjoint.
\end{claim}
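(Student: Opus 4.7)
My plan is to argue by contradiction: I will assume some $v \in X_{i-1} \cap Y_{i}$ and from this build an augmenting path in $\cal G$ of length at most $2i-1$, which contradicts the first property of $M_i$ in Lemma~\ref{lm:EL}. By the definitions of $X_{i-1}$ and $Y_i$, I can pick alternating paths $P$ from some $x_0 \in X$ to $v$ of length $p \le i-1$ and $Q$ from some $y_0 \in Y$ to $v$ of length $q \le i$; the candidate augmenting object will be the concatenation $W$ of $P$ with the reverse of $Q$, a walk from $x_0$ to $y_0$ of length $p+q \le 2i-1$.

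First I would check that $W$ is a genuine alternating walk. Since every edge of $\cal G$ crosses the bipartition $(A,B)$, the vertices on $P$ alternate sides starting from $x_0 \in A$ and the vertices on $Q$ alternate sides starting from $y_0 \in B$, so $v \in A$ if and only if $p$ is even if and only if $q$ is odd; in particular $p$ and $q$ have opposite parities. Conditions~(iii)--(iv) in the definition of an alternating path then show that the last edge of $P$ lies in $M_i$ iff $p$ is even, while the last edge of $Q$ lies in $M_i$ iff $q$ is even. Since the parities disagree, exactly one of these two edges is matching, so the concatenation alternates correctly through the joining vertex $v$; the walk $W$ moreover starts and ends at the unmatched vertices $x_0 \in A$ and $y_0 \in B$ and has odd length.

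Next I would carry out the classical walk-to-path reduction: if $W$ revisits some vertex $u$ at positions $s < t$, then by bipartiteness of $\cal G$ the enclosed closed sub-walk has even length $t-s$, so $s$ and $t+1$ have opposite parities; excising this sub-walk therefore yields a strictly shorter walk with the same endpoints, and the alternation at $u$ survives because the two newly adjacent edges have opposite matching statuses. Iterating the excision produces a simple alternating path from an unmatched vertex in $A$ to an unmatched vertex in $B$ of length at most $2i-1$, i.e.\ an augmenting path for $M_i$, contradicting Lemma~\ref{lm:EL} and proving the claim. The only real subtlety I anticipate is the parity book-keeping at the joining vertex $v$; the walk-to-path reduction itself is a purely combinatorial statement about a single walk in the underlying multigraph, with no measurability considerations entering.
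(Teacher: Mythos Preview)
Your argument is correct and follows the same overall strategy as the paper: assume a common vertex, concatenate an alternating path from $X$ with the reverse of one from $Y$, and produce an augmenting path of length at most $2i-1$, contradicting Lemma~\ref{lm:EL}.

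The one difference worth noting is how simplicity of the concatenated object is obtained. You concatenate \emph{arbitrary} witnessing paths $P$ and $Q$, obtain only an alternating \emph{walk}, and then invoke the classical excision argument (using bipartiteness to keep parities consistent) to shorten it to a simple path. The paper instead chooses $l\le i-1$ \emph{minimal} so that some vertex of an alternating $P$-path from $X$ lies in $Y_i$; minimality forces all but the last vertex of $P$ to lie outside $Y_i$, while every vertex of $Q$ lies in some $Y_j\subset Y_i$, so the two paths are automatically vertex-disjoint except at the join, and the concatenation is already a simple alternating path with no reduction step needed. Your route is the textbook one and works perfectly well; the paper's minimality trick just sidesteps the walk-to-path bookkeeping.
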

\begin{proof}[Proof of Claim] 
Assuming the contrary, pick alternating paths $P=(x_0,\ldots,x_\ell)$ and $Q=(y_0,\ldots,y_k)$ starting at $A$ and $B$ respectively 
(thus $x_0\in X_0$ and $y_0\in Y_0$) such that $\ell+k$ is odd, $x_\ell=y_k$, $\ell \le i-1$, $k\le i$, and the value of $\ell+k$ is smallest possible. Note that $k\ge 1$ as otherwise the path $P$ of length $\ell\le 2i-1$ is $M_i$-augmenting, contradicting the choice of~$M_i$. Similarly, $\ell\ge 1$.

 The minimality of $\ell+k$ implies that the paths $\tilde P$ and $\tilde Q$ in $\tilde G$ intersect only in their end-points. 
Also, if $\ell$ is odd (resp.\ even), then exactly one of the edges $(x_{\ell-1},x_\ell),(y_{k-1},y_k)\in E$ 
(resp.\ $(x_{\ell},x_{\ell-1}),(y_{k},y_{k-1})\in E$) belongs to $M_i$; namely  $(y_{k-1},y_k)\in M_i$ (resp.\ $(x_\ell,x_{\ell-1})\in M_i$). Therefore the concatenation of $P$ with the reversal of $Q$ at the common end-point, that is, 
$
(x_0,\ldots,x_\ell,y_{k-1},\ldots,y_0)
$,
 is an augmenting path of length at most $2i-1$, contradicting the choice of $M_i$.\end{proof}

Note that $\mu(X^i) = \mu(Y^i)$, since $\mu(A)=\mu(B)<\infty$ by our assumption and
$\mu(\pi_1(M_i))=\mu(\pi_2(M_i))$ by Lemma~\ref{lm:mp}.

\begin{claim}\label{cl:muXi} It holds that
$
\mu(X^i)=\mu(Y^i) \le (1+c)^{\frac{-i+1}2}\mu(A).
$
\end{claim}
\begin{proof}[Proof of Claim]
Suppose first that $i$ is odd. By Claim~\ref{cl:Xi-1Yi}, the sets $X_{i-1}$ and $Y_i$ are disjoint. As both are subsets of $A$, at least one of these two sets, suppose $X_{i-1}$, has measure at most $\mu(A)/2$.  
By Claim~\ref{claim-growth}, the measures of the sets $X_0\subseteq X_2\subseteq\ldots\subseteq X_{i-1}$ increase by factor at least $1+c$ each time. Thus,
by $X_0=X^i$, we have
 $$
 \mu(A)
\ge \mu(X_{i-1}) \ge (1+c)^{\frac{i-1}2}\mu(X_0) = (1+c)^{\frac{i-1}2}\mu(X^i),
$$
 giving the required. The obvious modifications of this proof (swapping the roles of $A$ and $B$ if needed) also apply to the remaining cases (when $\mu(Y_i)\le \mu(A)/2$ or when $i$ is even), giving the stated bound.
\end{proof}

Now we are ready to finish the proof of the theorem (essentially by applying the Borel-Cantelli Lemma). Indeed, $\pi_1(M)$ contains every vertex of $A$ which matched by $M_i$ and on which all matchings $M_j$ with $j\ge i$ agree. Thus, for every $i\in \I N$, we have
 \begin{equation}\label{eq:UnfixI}
\mu(A\setminus \pi_1(M))\le \mu(X^i)+\sum_{j=i}^\infty \mu(\pi_1(M_{j+1}\triangle M_j)).
\end{equation}
 The last series is summable by Claims~\ref{claim-bound} and~\ref{cl:muXi}. Furthermore, Claim~\ref{cl:muXi} also gives that $\mu(X^i)$ tends to $0$. Since we can pick an arbitrarily large $i$, we have that
 $\mu(A\setminus \pi_1(M))=0$,  which finishes the proof of Theorem~\ref{th:LN}.
\qed

\subsection{Finishing the proof of Theorem \ref{th:suff}}


\begin{proof}[Proof of Theorem~\ref{th:suff}\ref{it:suff1}.] 
The only non-trivial part is to show the converse direction, namely that if $A$ is a domain of expansion (in particular, $0<\mu(A)<\infty$),
$A$ and $B$ essentially cover each other, and
$\mu(B)=\mu(A)$, then $A$ is essentially Borel equidecomposable to~$B$. By removing null sets, we can assume that $A$ and $B$ are Borel.

Choose a finite symmetric set $T\subset \Ga$ such that $T.A\supset B$ and $T.B\supset A$~a.e. By 
Lemma \ref{lm:Cover}, $B$ is also a domain of expansion. Let $t:=|T|$. Fix $\eta>0$ with $2t\eta<1$.
Take a finite subset $R\subset \Gamma$ which is $\eta$-expanding for both $A$ and $B$.

Let us we verify that the Borel graph $G:=(A,B,E_{TR\cup (TR)^{-1}}\cap (A\times B))$ satisfies the bipartite expansion condition \eqref{eq-hall-condition} with $c:=1$.  Let $Y$ be a measurable subset of, say, $A$. Since $R$ is $\eta$-expanding
for $A$, at least one of the following two alternatives hold.
If $\mu(R.Y\cap A)\ge \mu(Y)/\eta$ then, by $T.B\supset A\supset R.Y\cap A$ a.e., 
there is $\ga\in T$ with the translate $\ga.B$ covering at least $1/t$-th measure of $R.Y\cap A$; thus
 \begin{eqnarray*}
 \mu (N(Y))&\ge& \mu(TR.Y\cap B)\ \ge\ \mu(\ga^{-1}.(R.Y)\cap B)\\
 &=& \mu(R.Y\cap \ga.B)\ \ge\ \frac1{t\eta}\, \mu(Y)\ \ge\ 2\,\mu(Y).
\end{eqnarray*}
If $\mu(A\setminus R.Y)\le \eta\, \mu(A)$ then, by $T.A\supset B$ and $\mu(A)=\mu(B)\in (0,\infty)$, we have 
\begin{eqnarray*}
	\mu(B\setminus N(Y))&\le& \mu(B\setminus TR.Y)\ \le\ \mu(T.(A\setminus R.Y))\\
	&\le& t\,\mu(A\setminus R.Y)\ \le\ t\,\eta\,\mu(A)\ <\ \frac12\,\mu(B).
	\end{eqnarray*} 
Thus, $G$ is indeed a bipartite $1$-expander.

Now, by Theorem~\ref{th:LN}, $G$ contains a Borel matching $M$ 
such that the unmatched sets $A\setminus \pi_1(M)$ and $B\setminus \pi_2(M)$ have measure 0. By Lemma~\ref{lm:MEQ}\ref{it:MEQ2}, the matched sets $\pi_1(M)\subset A$ and $\pi_2(M)\subset B$ are Borel equidecomposable, as desired.
\end{proof}

Let us state one step needed in the proof of Theorem~\ref{th:suff}\ref{it:suff2} as Part~\ref{it:combine1}  of the following auxiliary proposition.

\begin{prop}\label{pr:combine}
	Let $\Ga\actson \Om$ be as in Assumptions \ref{as:1} and~\ref{as:mu}. Let measurable sets $A, B\subset \Om$ be essentially Borel equidecomposable.	
	\begin{enumerate}[(i),nosep]
		\item\label{it:combine1} If $A$ and $B$ are set-theoretically equidecomposable then $A$ and $B$ are measurably equidecomposable.
		\item\label{it:combine2} If $A$ and $B$ are Baire equidecomposable then $A$ and $B$ are Baire-Lebesgue equidecomposable.
	\end{enumerate}
\end{prop}
\begin{proof}
	We will prove both items at the same time.
Pick null sets $N_A$ and $N_B$ such that $A\setminus N_A$ and $B\setminus N_B$ are Borel equidecomposable.	
	 Let $U_1,\ldots, U_m\in \cal B$ and $\ga_1,\ldots, \ga_{m}\in \Ga$ be such that $A\setminus N_A = U_1\sqcup\ldots \sqcup U_{m}$ and $B\setminus N_B = \ga_1.U_1\sqcup \ldots\sqcup \ga_m.U_m$. Similarly let $V_1,\ldots, V_n\in 2^\Om$ and $\de_1,\ldots, \de_n\in \Ga$ be such that $A = V_1\sqcup \ldots\sqcup V_n$ and $B= \de_1.V_1\sqcup \ldots \sqcup\de_n.V_n$. 
	
	Let $N'$ be a Borel null set which contains $N_A\cup N_B$. Let $\Lambda$ be the subgroup of $\Ga$ generated by $\{\ga_1,\ldots, \ga_m,\de_1,\ldots, \de_n\}$ and let $N := \Lambda.N'$. Since $\Lambda$ is countable and $N'$ is a Borel null set, we have that $N$ also is a Borel null set. Furthermore, by the $\La$-invariance of the set $N$, we have that  $\de_i.(V_i\cap N)=\de_i.V_i\cap \de_i.N= \de_i.V_i\cap N$ for $i\in[n]$  and  $\ga_i.(U_i\setminus N) = \ga_i.U_i\setminus N$ for $i\in [m]$.
	
	It follows that $A$ and $B$ are  equidecomposable using the partition 
	$$
	 A = (U_1\setminus N)\sqcup\ldots \sqcup (U_m\setminus N)\sqcup (V_1\cap N) \sqcup \ldots \sqcup (V_n\cap N)
	 $$ and the group elements $\ga_1, \ldots, \ga_m, \de_1,\ldots, \de_n$. Informally speaking, we use the pieces $V_i$ on $N$ and the pieces
	$U_i$ on the complement of~$N$.
	
It is clear that the sets $U_i\setminus N$ are Borel (in particular they are measurable and have the property of Baire). Furthermore, the sets $V_i\cap N$ are contained in the null set $N$, so they are measurable. This finishes the proof of the first part. 
	
	Additionally, if the sets $V_i$ have the property of Baire then the sets $V_i\cap N$ have the property of Baire as well since $N$ is Borel. This observation finishes the proof of the second part.
\end{proof}

\begin{proof}[Proof of Theorem~\ref{th:suff}\ref{it:suff2}.]
	Let us show the converse direction (as the forward direction is trivial).

Since $A$ and $B$ are equidecomposable, they also cover each other. By the converse direction in Theorem \ref{th:suff}\ref{it:suff1} that we have already proved, these sets are essentially Borel equidecomposable. Thus Proposition~\ref{pr:combine}\ref{it:combine1} gives 
the required measurable equidecomposition of $A$ and $B$.\end{proof}

\section{Proof of Proposition~\ref{pr:paradox}}\label{se:aux}

There was some freedom as to how to define when an action is paradoxical. Our Definition~\ref{de:paradox}\ref{de:paradox3} is of the same form as the known paradoxes are usually stated. 
We had to add some further properties, namely, the local compactness (to ensure that the family of sets covered by the
above statement is rich enough) and the preservation of compact closures (a rather weak restriction
which holds, for example, if the whole space $\Omega$ is compact or the group acts by homeomorphisms). Note
that the latter property implies that the family $\CU$ is invariant. (Recall that $\CU$ consists of those subsets of $\Om$ that cover a non-empty open set and have compact closure.)

The proof of Proposition~\ref{pr:paradox} will occupy this section.
We will need the Banach-Schr\"oder-Bernstein Lemma (see e.g.\ \cite[Theorem~3.6]{TomkowiczWagon:btp}), stated in terms of equidecompositions.

\begin{lemma}[Banach-Schr\"oder-Bernstein]\label{lm:BSB} 
	If $A$ is equidecomposable to some $B'\subset B$ and $B$ is equidecomposable to some $A'\subset A$, then $A$ and $B$ are equidecomposable.\qed
\end{lemma}

Also, it will be convenient to use the  \emph{semigroup of equidecomposability types} $\C S$ that was introduced by Tarski~\cite{Tarki38}. For more details, see~\cite[Chapter~10]{TomkowiczWagon:btp} whose presentation we follow.
Informally speaking, we consider all multi-subsets of $\Om$ of bounded multiplicity, identified under  the appropriately defined equidecomposability relation. Formally, let $\C P^*(\Om)$
be the family of all subsets of $\Om^*:=\Om\times \I N$ whose projection on $\I N$ is finite. 
Let $\Ga^*$ be the direct product of $\Ga$
and the group 
$S_{\I N}$ of all permutations of $\I N$. Define its action $a^*$ on $\Om^*$ by $a^*((\gamma,\sigma),(x,n)):=(a(\gamma,x),\sigma(n))$ (that is, $\Ga^*$ acts component-wise). For $A,B\in\C P^*(\Om)$, we write $A\preceq B$ if $A$ is set-theoretically equidecomposable under the action $a^*$ to a subset of $B$. If $A\preceq B$ and $B\preceq A$, then we write~$A\sim B$. This is an equivalence relation.
The equivalence class of $A\in\C P^*(\Om)$ is denoted by $[A]$. Let
 \beq\label{eq:CS}
 \C S:=\{\,[A]\mid A\in\C P^*(\Om)\,\},
\eeq
 and define the sum of $[A],[B]\in\C S$ by taking disjoint representatives $A'\in [A]$ and $B'\in[B]$ and letting $[A]+[B]:=[A'\cup B']$.
 It is easy to see that this is well-defined and satisfies various natural properties, like commutativity, associativity, etc.  Also, $\preceq$ gives a partial order on~$\C S$.
We say that $A\in \C P^*(\Om)$ \emph{can be doubled} if $[A]\sim 2[A]$, where $n[A]:=[A]+\dots+[A]$ denotes the sum of $n$ copies of $[A]\in\C S$.
Under the identification of $x\in \Om$ with $x^*:=(x,0)\in\Om^*$, these definitions also apply to subsets of~$\Om$.

\begin{lemma}\label{lm:double}  If $A,B\subseteq \Om$ cover each other and $B$ can be 
	doubled, then $A$ can be 
doubled.
\end{lemma}

\begin{proof} 
By the covering assumption, $[A]\preceq m[B]$ and $[B]\preceq n[A]$
for some $m,n\in\I N$. From 
$[B]=2[B]$, it follows that
$[B]=r[B]$ for every integer $r\ge 2$. We conclude that
 \begin{equation}\label{eq:2nA<nA}
n(2[A])=2n[A]\preceq 2n(m[B])=2mn[B]=[B]\preceq n[A].
\end{equation}

A version of the \emph{Cancellation Law} (see~\cite[Theorem~10.20]{TomkowiczWagon:btp}) states that for any $Y,Z\in\C P^*(\Om)$ if $n[Y]\preceq n[Z]$ then $[Y]\preceq [Z]$. In brief, its proof proceeds by considering the bipartite graph $G$ whose parts are the sets $Y,Z\subset\Om\times \I N$ and whose edge set corresponds to a fixed witness of $n[Y]\preceq n[Z]$. Thus every vertex in $Y$ (resp.\ $Z$) has $G$-degree exactly $n$ (resp.\ at most~$n$). A simple double-counting of edges shows that every finite subset $X$ of $Y$ has at least $|X|$ neighbours in $Z$. Now, Rado's theorem~\cite{Rado42} (that uses the Axiom of Choice) gives that $G$ has a matching covering all elements of $Y$ and thus $Y\preceq Z$, as claimed.
	
 Thus, by~\eqref{eq:2nA<nA},  we have that $2[A]\preceq [A]$ which, by the Banach-Schr\"oder-Bernstein Lemma (Lemma~\ref{lm:BSB}), gives that $A$ can be doubled.\end{proof}

\begin{proof}[Proof of Proposition~\ref{pr:paradox}\ref{it:paradox1}.] Assume that $|\Om|>1$ as otherwise there is nothing to do. Recall that $A\in\CU$, that is, $A$  has compact closure
	and covers a non-empty open set. 
	
	One direction (namely that if $B$ is equidecomposable to $A$, then $B\in\CU$) was proved
	in Lemma~\ref{lm:minimal}.

Next, let us show that every $A\in\CU$ can be doubled. First, suppose that $A$ is an open set.
The set $A$ has to contain at least two elements. (Otherwise,  pick distinct elements $x,y\in\Om$ and an open set $U\ni x,y$ with compact closure,
and note that the sets $A$ and $U$ contradict the paradoxicality
of the action.) Since $|A|\ge 2$, we can find  disjoint non-empty open subsets $U,W\subset A$. 
	Since the action is paradoxical, $A$ is equidecomposable to each of $U$ and $W$. Thus two copies of $A$ are equidecomposable to $U\sqcup W\subset A$. By Lemma~\ref{lm:BSB}, $A$ can be doubled, as desired.
	For general $A\in\CU$, pick any open $U\in\CU$, which exists by the local compactness of~$\Om$. As we have just argued, the open set $U$ can be doubled. By Lemma~\ref{lm:minimal},
	$A$ and $U$ cover each other. Thus, by Lemma~\ref{lm:double}, $A$ can be doubled.

	Finally, it remains to show that arbitrary $A,B\in\CU$ are equidecomposable. 
	By Lemma~\ref{lm:minimal} we know that $A$ and $B$ cover each other. As we proved in the previous paragraph, each element $U\in \CU$ can 
	be doubled. By Lemma~\ref{lm:minimal}, each of the two obtained copies of $U$  necessarily belongs to $\CU$. Thus by induction, for any integer $n\ge 2$, there is a partition
	$A=A_1\sqcup\ldots\sqcup A_n$ such that each $A_i$ is equidecomposable to $A$ (and belongs to $\CU$). By choosing $n$ such that $n$ copies of
	$A$ cover $B$, one can conclude that $B$ is equidecomposable to a subset of $A$. The same also holds when we swap the
	roles of $A$ and $B$. Now the desired conclusion follows from Lemma~\ref{lm:BSB}.
\end{proof}

In order to establish the second part of Proposition~\ref{pr:paradox}, we rely on the following weaker version of a powerful result of Marks and Unger~\cite[Theorem~1.3]{MarksUnger16}.

\begin{theorem}[Marks and Unger~\cite{MarksUnger16}]
	\label{th:MU} 
 Suppose that $G=(V_1,V_2,E)$ is a locally finite bipartite Borel graph on a Polish space 
 such that
 \begin{equation}\label{eq:2Hall}
 |N_G(X)| \ge 2|X|,\quad \mbox{for every finite set $X$ with $X\subset V_1$ or
$X\subset V_2$.}
\end{equation} 
Then there is a Borel matching $M$ in $G$ such that the set of unmatched vertices in each part is meager.\qed
\end{theorem}

\begin{proof}[Proof of Proposition~\ref{pr:paradox}\ref{it:Paradox2}.] Again, the forward implication is trivial so we prove the converse direction only.
Let $A,B\in\CU\cap \C T$ be arbitrary. By Part~\ref{it:paradox1} of the proposition, that we have already proved, these sets are equidecomposable. Let $S\subseteq \Ga$ be a finite set that suffices for this equidecomposition. 
By enlarging $S$, assume that $S^{-1}=S\ni e$. 
Also, as we have showed in the proof of Part~\ref{it:paradox1}, every set in $\CU$ can be set-theoretically doubled. By repeating, we can find a finite symmetric set $T\ni e$ such that the relations $2|S|\,[A]\preceq [A]$ and 
$2|S|\,[B]\preceq [B]$ can be shown by using elements from~$T$ only. Let $R:=ST\cup (ST)^{-1}$ and consider the bipartite graph $G:=(A,B,E_{R}\cap (A\times B))$.

This graph satisfies~\eqref{eq:2Hall}. Indeed, take some finite set $X$, say $X\subset A$.
By the choice of $T$, the set $Y:=T.X\cap A$ contains $2\,|S|$ disjoint copies of $X$ and thus $|Y|\ge 2\,|S|\,|X|$. Since $Y\subset A\subset S.B$, there is $\ga\in S$ with $|Y\cap \ga.B|\ge |Y|/|S|\ge 2\,|X|$. Thus $N(X)\supset \ga^{-1}.(Y\cap \ga.B)$ has at least $2\,|X|$ elements, as claimed.

We cannot apply Theorem~\ref{th:MU} yet, as the sets $A$ and $B$ need not be Borel. So we proceed as follows.
Find Borel sets $A'\subseteq A$ and $B'\subseteq B$ such that $A\setminus A'$ and $B\setminus B'$ are meager and choose a Borel meager set $N$ containing $(A\setminus A')\cup (B\setminus B')$. (In fact, we can require $N$ to be an $F_\sigma$-set, see e.g.~\cite[Proposition~8.23]{Kechris:cdst}.) Let $\Lambda\subset\Ga$ be the subgroup generated by $R$. Of course, $\Lambda$ is countable. Since the action preserves meager and Borel sets, the set $\Lambda.N$ is meager and Borel. When we remove the set $\Lambda.N$ from the graph $G$, we remove whole components. Thus the new graph $G'$ still satisfies~\eqref{eq:2Hall}. Also, $G'$ is a Borel graph since its parts, $A\setminus\Lambda.N= A'\setminus \Lambda.N$ and $B\setminus\Lambda.N=B'\setminus\Lambda.N$, are Borel. Thus Theorem~\ref{th:MU} applies to $G'$ and gives a Borel matching $M$ such that all unmatched vertices are inside some Borel meager set~$N'$. By enlarging the set $N'$, we can also assume that it contains~$N$ as a subset. The Borel meager set $\Lambda.N'$ is again a union of some components of $G$. Thus the set-theoretic equidecomposition between $A$ and $B$ gives an equidecomposition between the meager sets $A\cap \Lambda.N'$ and $B\cap \Lambda.N'$ while $M$ gives
a Borel equidecomposition between $A\setminus \Lambda.N'$ and $B\setminus  \Lambda.N'$. 
Putting these together, we get the required Baire equidecomposition between $A$ and $B$ (using the finite set $R\subseteq \Ga$).\end{proof}

\begin{proof}[Proof of Corollary~\ref{cr:Leb}.]
All forward implications in Corollary~\ref{cr:Leb} are trivial so let us show the converse direction. In all cases, the sets $A$ and $B$ are in $\CU$ (resp.\ essentially are in $\CU$) so by Lemma~\ref{lm:minimal} they  cover (resp.\ essentially cover) each other. 

Now, Item~\ref{it:EssBor} of the corollary is a direct consequence of
Theorem~\ref{th:suff}\ref{it:suff1}. In order to derive Item~\ref{it:Mes} from Item~\ref{it:EssBor}, it suffices by Proposition~\ref{pr:combine}\ref{it:combine1} to show that $A$ and $B$ are set-theoretically equidecomposable, and this follows from Proposition~\ref{pr:paradox}\ref{it:paradox1}. 

Finally, let us derive Item~\ref{it:BL} from Item~\ref{it:Mes}. By the latter, we know that $A$ and $B$ are measurably equidecomposable. Proposition~\ref{pr:paradox}\ref{it:Paradox2} gives that these sets are also Baire equidecomposable.
These two equidecompositions can be combined into a Baire-Lebesgue equidecomposition by Proposition~\ref{pr:combine}\ref{it:combine2}.\end{proof}

\section{Relation to (local) spectral gap}\label{spectral}\label{sec-spectral-gap-and-expansion}

Recall that the local spectral gap property was defined in Section~\ref{intro:lsg}.
Note that we do not assume in the definition that $\Ga$ is countable. The authors of \cite{BoutonnetIoanaSalehi17} consider only countable $\Ga$, in which case our definition coincides with the one in~\cite{BoutonnetIoanaSalehi17}. However, the definition of local spectral gap, as stated in Section~\ref{intro:lsg}, makes sense also when $\Ga$ is not countable.

For a finite multiset $Q\subset \Ga$, let $T_Q\colon L^2(\Om,\mu) \to L^2(\Om,\mu)$ be the averaging operator defined by 
 \begin{equation}\label{eq:TQ}
(T_Qf)(x):=\frac1{|Q|} \sum_{\gamma\in Q} f(\gamma^{-1}. x),\qquad f\in L^2(\Omega,\mu),\ x\in\Omega.
 \end{equation}
We say that the operator $T_Q$ has \emph{spectral gap} if there exists a 
constant $c>0$ such that for any $f\in L^2(\Om,\mu)$ with $\int_\Om f(x) 
\dd\mu(x) =0$ we have $\|T_Q f\|_2 \le 
(1-c)\,\|f\|_2$. Also, we say that the action $a:\Ga\actson \Om$ \emph{has spectral gap} if there is a finite multiset (equivalently, a finite set) $Q$ such that the operator $T_Q$ has spectral gap. If $\mu(\Omega)<\infty$, then the latter property is easily seen to be equivalent to the local spectral gap of the action $a$ with respect to the whole space $\Omega$. 

Boutonnet et al~\cite[Theorem A]{BoutonnetIoanaSalehi17} proved the following sufficient condition for local spectral gap.

\begin{theorem}[Boutonnet et al~\cite{BoutonnetIoanaSalehi17}]\label{th:BIS:A} Let $\Gamma$ be a connected  Lie group with
	a fixed left Haar measure $m_\Gamma$.  Suppose that 
	the Lie algebra $\frak g$ of $\Gamma$ is simple. Let $\Ad:\Gamma\to GL(\frak g)$ denote its adjoint
	representation. 
	Let $\Lambda$ be a dense countable subgroup of $\Gamma$ and let $\frak B$ be a
	basis of $\frak g$ such that for every $g\in \Lambda$ the matrix of $\Ad(g)$ in the basis
	$\frak B$ has all entries algebraic. Then the left translation action $\Lambda\actson (\Gamma,m_\Gamma)$ has local spectral gap with respect to every
	measurable set $X$ with compact closure and non-empty interior.\qed
\end{theorem}

The above notions and results are of interest to us because of the following equivalence, mentioned in
the Introduction.

\begin{lemma} \label{lm:equivalence} Let Assumptions \ref{as:1} and~\ref{as:mu} apply and
	let $X\subset \Om$ be a measurable set of finite positive measure. Then the following are equivalent.
	\begin{enumerate}[(A),nosep]
		\item\label{it:equivalenceA} The set $X$ is a domain of expansion with respect to the action $\Ga\actson \Om$.
		\item\label{it:equivalenceB} The action $\Ga\actson \Om$ has  local spectral gap with respect to $X$.
	\end{enumerate}
\end{lemma}

\begin{proof} Boutonnet et al~\cite[Theorem 7.6]{BoutonnetIoanaSalehi17} proved that, under the additional assumptions that $\Ga$ is countable and the action $\Ga\actson\Omega$ is ergodic,
 Property~\ref{it:equivalenceB} (i.e., the local spectral gap with respect to $X$) is equivalent to the following:

 {\it	\begin{enumerate}[(C),nosep]
\item \label{it:equivalenceC}  If a sequence $A_n$, $n=0,1,\ldots$, of measurable subsets of $\Om$ satisfies  $\mu(A_n\cap X)>0$ for all $n$, and 
\beq\label{eq-lim}
\lim_{n\to\infty}\, \frac{\mu\big((\ga.A_n\bigtriangleup A_n\big)\cap X)}{\mu(A_n\cap X)} =0
\eeq
 for all $\ga\in \Ga$, then $\lim_{n\to\infty} \mu(A_n\cap X) = \mu(X)$.\end{enumerate}
}

In fact, the part of the proof in~\cite{BoutonnetIoanaSalehi17} that shows the equivalence between \ref{it:equivalenceB} and \ref{it:equivalenceC} does not use the ergodicity of the action $\Gamma\actson\Omega$. Clearly, \ref{it:equivalenceA} looks closer in spirit to \ref{it:equivalenceC} than to \ref{it:equivalenceB} and, indeed, it is fairly easy to derive the equivalence of \ref{it:equivalenceA} and \ref{it:equivalenceC}. Since we need only the implication \ref{it:equivalenceB} $\implies$ \ref{it:equivalenceA} for our equidecomposition results, we  present a direct proof of this for reader's convenience. In the other direction,
we show only that \ref{it:equivalenceA} implies \ref{it:equivalenceC}, leaving to the reader to check that the proof of the implication \ref{it:equivalenceC} $\implies$ \ref{it:equivalenceB} from~\cite[Theorem 7.6]{BoutonnetIoanaSalehi17} does not use ergodicity.


In order to prove \ref{it:equivalenceB} $\implies$ \ref{it:equivalenceA}, we need some preliminaries. 
Let us say that the action
$\Ga\actson \Omega$ has a \emph{highly local spectral gap} with respect to
a measurable set $B\subset \Omega$ with $0<\mu(B)<\infty$ if there are
a finite set $S\subset \Ga$ and a real $\kappa$ such that
\begin{equation}\label{eq:rlsg}
\|f\|_{2,B}\le \kappa \sum_{g\in S} \|g.f-f\|_{2,B\,\cap\, g.B}
\end{equation}
for any $f\in L^2(\Omega,\mu)$ with $\int_B f\dd\mu=0$ (equivalently, for every $f\in L^2(B,\mu)$ with $\int_B f\dd\mu=0$).

One motivation behind this definition (besides that it is useful for our proof) is that each side of~\eqref{eq:rlsg} depends only on the restriction of $f$ to $B$ (but not on any values of $f$ outside $B$ as is the case in~\eqref{eq-local-gap}).

Clearly, the highly local spectral gap for $B$ implies the local spectral gap for 
$B$ (with the same choice of $S$ and $\kappa$). Let us show that the converse implication 
also holds. 

\begin{claim}\label{cl:RSG} If the action has local spectral gap with respect to a measurable set $B\subseteq \Om$ with $0<\mu(B)<\infty$, then it has the highly local spectral gap property with respect to~$B$.\end{claim}

\bpf[Proof of Claim.] Let a finite set $S$ and a real $\kappa$ satisfy the local 
spectral gap property with respect to~$B\subset \Om$.  By enlarging $S$, we can assume that $e\in S$.
Define $\kappa':=\kappa\, |S|$ and
$$
S':=\{gh^{-1}\mid g,h\in S\},
$$
where we view $S'$ as a set, not as a multiset. (Alternatively, if one views $S'$ as a multiset so e.g.\ $|S'|=|S|^2$, then $\kappa'=\kappa$ suffices in the argument below.)

Let us show that $S'$ and $\kappa'$ satisfy
the highly local spectral gap condition. Take any $f\in 
L^2(\Omega,\mu)$ 
with 
$\int_B f\dd\mu=0$.

We define $f'\in L^2(\Omega,\mu)$ as follows. Fix some total order on $S$ with the identity $e$ being the smallest element. For $x\in \Omega$,  if there is $\gamma\in S$ such that $\gamma.x\in 
B$ then let $\gamma_x$ be the smallest such $\gamma$ and define $f'(x):=f(\gamma_x.x)$; otherwise (i.e.,\ if $S.\{x\}\cap B=\emptyset$), we let $f'(x):=0$ (while $\gamma_x$ is undefined). Since $e$ comes before any other element of $S$, we have that $f'(x)=f(x)$
for all $x\in B$. For $g,h\in S$, define 
$$
 B_{g,h}:=\{x\in B\mid \gamma_{g^{-1}.x}=h\}.
 $$ 
 It follows from the definition that, for every $g\in S$, we have $B=\sqcup_{h\in S} B_{g,h}$, that is,  
 the sets $B_{g,h}$, $h\in S$, are disjoint and partition $B$. Also, trivially, $B_{g,h}\subseteq B\cap gh^{-1}.B$.

As $f$ and $f'$
coincide on $B$, we have
$\int_B f'\dd\mu=\int_B f\dd\mu=0$. Also, the square of the $L^2(\Omega,\mu)$-norm of 
$f'$ is 
finite, as it is at most $|S|$ times the square of $\|f\|_{2,B}$. By the properties stated above and the inequality
$(\sum_{h\in S} x_h)^{1/2}\le \sum_{h\in S} x_h^{1/2}$ valid for any non-negative reals $x_h$, we have that
\begin{eqnarray*} \|f\|_{2,B} &=&
	\|f'\|_{2,B}\ \le\ \kappa \sum_{g\in S} \|g.f'-f'\|_{2,B}\\
	&=&  \kappa \sum_{g\in S} 
	\left(\sum_{h\in S}\int_{B_{g,h}}  
	(f(hg^{-1}.x)-f(x))^2\dd\mu(x)\right)^{1/2}\\
	&\le &  \kappa \sum_{g\in S} 
	\left(\sum_{h\in S}\int_{B\,\cap\, gh^{-1}.B} 
	(f(hg^{-1}.x)-f(x))^2\dd\mu(x)\right)^{1/2}\\
	&\le & \kappa \sum_{g\in S} \sum_{h\in S}
	\left(\int_{B\,\cap\, gh^{-1}.B} 
	(f(hg^{-1}.x)-f(x))^2\dd\mu(x)\right)^{1/2}\\
	&\le& \kappa\,|S| \sum_{\gamma\in S'}\left(\int_{B\,\cap\,
		\gamma.B}  
	(f(\gamma^{-1}.x)-f(x))^2\dd\mu(x)\right)^{1/2}\\
	&=& \kappa' \sum_{\gamma\in S'} \|\gamma.f-f\|_{2,B\,\cap\, 
		\gamma.B},
\end{eqnarray*}
that is, the real $\kappa'$ and the set $S'$ establish the highly local spectral gap for~$B$.
This proves Claim~\ref{cl:RSG}.\epf

Now, we can give a direct proof that \ref{it:equivalenceB} implies~\ref{it:equivalenceA}. Since the properties in question are invariant under scaling
the measure by a constant factor, assume that $\mu(X)=1$. By Claim~\ref{cl:RSG}, we can find a real $\kappa$ and a symmetric finite set $S\subseteq \Gamma$ with $S\ni e$ that satisfy  the highly local spectral 
gap condition for $X$. Now, given $\eta>0$, let $\ell\in\I N$ satisfy $(1+\eta/(2\kappa^2|S|^2))^\ell\ge 1/\eta$ and consider the set
 $$
 Q:=S^\ell=\{\gamma_1\ldots\gamma_\ell\mid \gamma_1,\ldots,\gamma_\ell\in S\}
 $$ of all possible $\ell$-wise products of
elements of~$S$. We will show that this set $Q$ is $\eta$-expanding for~$X$. We need an auxiliary claim first.

\begin{claim}\label{cl:SY} For every measurable $Y\subset X$, we have
	\begin{equation}
	\label{eq:SY}
	\mu(S.Y\cap (X\setminus Y))\ge \frac{1-\mu(Y)}{2\kappa^2|S|^2}\,\mu(Y).
	\end{equation}
	\end{claim}

\begin{proof}[Proof of Claim.]
Let $y:=\mu(Y)$.  Define $f:\Om\to\I R$ by
\beq\label{eq:f}
f(x):=(1-y)\I 1_Y-y\I 1_{X\setminus Y}=\left\{\begin{array}{ll}
	1-y,&x\in Y,\\ -y, & x\in X\setminus Y,\\
	0,& x\in \Om\setminus X.
\end{array}\right.
\eeq
Then $\|f\|_{2,Y}^2=(1-y)^2y+y^2(1-y)=y(1-y)$ and $\int_X f(x)\dd\mu(x) =0$. 

Let $\gamma\in S$. For $x\in X\cap  \gamma.X$, we have that $\gamma^{-1}.x\in X$ and thus $|(\gamma.f)(x)-f(x)|=|f(\gamma^{-1}.x)-f(x)|$ assumes value 0  or $1$; in fact, it is 1 if and only if $x\in Y$ and $\gamma^{-1}.x\in X\setminus Y$ or vice versa, that is, precisely if 
$$
x\in (Y\cap \gamma.(X\setminus Y))\cup (\gamma.Y\cap(X\setminus Y)).
$$
Since $S=S^{-1}$, the latter set is a subset of $\gamma.Z\cup Z$, where $Z:=S.Y\cap (X\setminus Y)$.
Since the action is measure-preserving, we conclude that $\|\gamma.f-f\|_{2,X\,\cap\,  \gamma.X}^2\le \mu(\gamma.Z\cup Z)\le 2\mu(Z)$. 

The above (in)equalities and the choice of $S,\kappa$ give that
$$
(y(1-y))^{1/2}=\|f\|_{2,X}\le \kappa\sum_{\gamma\in S} \|\gamma.f-f\|_{2,X\,\cap\,  \gamma.X}\le \kappa|S|\, \big(2\mu(Z)\big)^{1/2},
$$
 which implies the claim.\end{proof}

Now we are ready to show that $Q$ is $\eta$-expanding for~$X$. Take an arbitrary measurable subset $Y\subset X$. Let $Y_0:=Y$ and, inductively for $i\in[\ell]$, let $Y_i:=S.Y_{i-1}\cap X$. Clearly, $Y_\ell\subset Q.Y\cap X$. If for some $i\le \ell$, we have
$\mu(Y_i)\ge 1-\eta$, then (since $e\in S$) we have that each of $Y_i\subset\ldots\subset Y_\ell\subset Q.Y\cap X$ has
measure at least $1-\eta$, as required. Otherwise, we obtain from Claim~\ref{cl:SY} by induction on $i=0,\ldots,\ell$ that $\mu(Y_i)\ge(1+\eta/(2\kappa^2|S|^2))^i\mu(Y)$. Taking $i=\ell$, we get the the required lower bound $\mu(Q.Y\cap X)\ge \mu(Y_\ell)\ge \mu(Y)/\eta$. Since $\eta>0$ was arbitrary, we conclude that $X$ is a domain of expansion. We have shown that \ref{it:equivalenceB} implies \ref{it:equivalenceA}.

Now, let us show that \ref{it:equivalenceA} implies \ref{it:equivalenceB}. Let $X$ be a domain of expansion. For each positive integer $n$, fix some finite set $S_n\subset \Ga$ which is $(1/n)$-expanding for~$X$. Let $\Lambda$
be the  subgroup of $\Ga$ generated by $\cup_{n=1}^\infty S_n$. 

First, let us show that the countable group $\Lambda$ satisfies~\ref{it:equivalenceC}. Suppose on the contrary that some sequence $(A_n)_{n\in\I N}$ violates this property. 
Let $A_n':=A_n\cap X$. By passing to a subsequence, we can assume that there is an integer $m\ge 2$ such that $\mu(A_n')< \mu(X)(1-2/m)$ for every~$n$.
Let $S:=S_m$, that is, $S$ is a $(1/m)$-expanding set for~$X$. Thus, for every $n\in\I N$,
$$
 \mu (S.A_n'\cap X) \ge \min\left(\left(1-\frac1m\right)\mu(X),\ m\,\mu(A_n')\right).
$$
 By passing to a subsequence again, we can assume that 
\begin{enumerate}[(i),nosep]
\item\label{it:C1} for all $n$ we have $\mu (S.A_n'\cap X) \ge \left(1-\frac1m\right)\mu(X)$, or 
\item\label{it:C2} for all $n$ we have $\mu (S.A_n'\cap X) \ge  m\, \mu(A_n')$. 
\end{enumerate}

If \ref{it:C1} holds then clearly $\mu((S.A_n'\cap X)\setminus A_n')\ge \left(1-\frac1m\right)\mu(X)-(1-\frac2m)\mu(X)= \mu(X)/m$ and thus, for some $\ga\in S$ and infinitely many $n$, we have $\mu((\ga.A_n'\cap X)\setminus A_n') \ge \mu(X)/(m|S|)$, and so also 
$$
\mu\big((\ga.A_n\setminus A_n)\cap X\big)
 =\mu((\ga.A_n\cap X)\setminus(A_n\cap X))
 \ge \frac{\mu(X)}{m|S|}
$$
for infinitely many $n$. This is in contradiction with the assumption \eqref{eq-lim} of Property~\ref{it:equivalenceC}.

Suppose now that \ref{it:C2} holds. Since $m\ge2$, we have $\mu((S.A_n'\cap X) \setminus A_n')\ge \mu(A_n')$ for all~$n$. Therefore for some $\ga\in S$ and infinitely many $n$ we have $\mu((\ga.A_n'\cap X) \setminus A_n') \ge \mu(A_n')/|S|$, and hence 
$$\limsup_{n\to\infty} \frac{\mu((\ga.A_n\bigtriangleup A_n)\cap X)}{\mu(A_n\cap X)} 
 \ge \limsup_{n\to\infty} \frac{\mu((\ga.A_n\cap X)\setminus( A_n\cap X))}{\mu(A_n')}
\ge \frac{1}{|S|},
$$
which again is a contradiction to~\eqref{eq-lim}.

Thus \ref{it:equivalenceC} holds for $X$ with respect to the action $\Lambda\actson\Omega$. By~\cite[Theorem 7.6]{BoutonnetIoanaSalehi17}, the action of the countable group $\Lambda$ on $\Omega$ has local spectral gap with respect to~$X$, that is, \ref{it:equivalenceB} holds for the group~$\Lambda$.  Of course, when we enlarge the group to $\Ga$, then
\ref{it:equivalenceB} still holds.
\end{proof}

The following proposition will be needed later, for estimating the number of pieces in some equidecompositions given by our proofs.

\begin{prop}\label{pr:Spectral} In addition to Assumptions~\ref{as:1} and~\ref{as:mu}, assume that $\mu$ is a finite measure. Let $S\subset \Ga$ be a finite symmetric multiset, and let $c\in (0,1)$ be such that for every $f\in L^2(\Om,\mu)$ with $\int f(x)\dd\mu(x)=0$ we have $\|T_Sf\|_2\le (1-c)\|f\|_2$. Define $c':= c(2-c)$. Then the following statements hold.

\begin{enumerate}[(i),nosep]
\item\label{it:Spectral1} For every measurable $Y\subseteq \Om$ it holds that
 $$
 \mu(S.Y)\ge \frac{\mu(Y)\mu(	\Om)}{(1-c)^2\mu(\Om)+2c\mu(Y)-c^2\mu(Y)}\ge \mu(Y)\left(1+c'\,\frac{\mu(\Om\setminus Y)}{\mu(\Om)}\right).
 $$

\item\label{it:Spectral2} Let $\eta>0$ and  $\ell\in \N$ be such that $(1+c'\eta)^\ell>1/\eta$. Then $S^\ell\subset\Ga$  
	is an $\eta$-expanding set for~$\Om$. 
\end{enumerate}
 \end{prop}

\begin{proof} By scaling the measure, we can assume that $\mu(\Om)=1$. Take any measurable $Y\subset \Om$ with  $y:=\mu(Y)>0$. Analogously to~\eqref{eq:f}, 
define $f:=(1-y)\I 1_Y-y\I 1_{\Om\setminus Y}$. We have $\|f\|_2^2=y(1-y)$ and $\int_\Om f(x)\dd\mu(x) =0$. 

Let $Z:=S.Y$ and $z:=\mu(Z)$. Clearly, for every $x\in \Om\setminus Z$ we have that $(T_Sf)(x)=-y$. By
the invariance of the measure,
it also holds that $\int_\Om T_Sf\dd\mu
=|S|^{-1}\sum_{\ga\in S} \int_\Om \ga.f\dd \mu
=0$. Under these constraints on $T_Sf$, its $L^2$-norm is
minimised when the function is constant on $Z$, that is, assumes the value $(1-z)y/z$ there.
Thus
 $$
 \|T_Sf\|_2^2\ge \left(\frac{(1-z)y}{z}\right)^2z+y^2(1-z)=\frac{y^2(1-z)}z.
 $$
 By the spectral gap property, the left-hand side is at most $(1-c)^2\|f\|^2_2=(1-c)^2y(1-y)$. Solving the obtained linear inequality in $z$, we obtain the first inequality of Part~\ref{it:Spectral1}. The second inequality is obtained by observing that by $c,y\in [0,1]$,
 $$\frac{1}{(1-c)^2+2cy-c^2y}-1=\frac{c(2-c)}{(1-c)^2+2cy-c^2y}\,(1-y)\ge c' (1-y).$$

We prove Part~\ref{it:Spectral2} similarly as we did after Claim~\ref{cl:SY}. Define $Y_0:=Y$. Inductively for $i\in\I N$, let $Y_{i+1}:=S.Y_i$. If for some $i\le \ell$, we have $\mu(Y_i)\ge 1-\eta$, then  $\mu(Y_\ell)\ge \mu(Y_{\ell-1})\ge \dots\ge \mu(Y_i)$ (since the set $S$ is non-empty and $\mu$ is invariant). Otherwise, the measure of each new set $Y_i$, $i\le \ell$, increases by factor at least $1+c'\eta$ by Part~\ref{it:Spectral1} and thus $\mu(S^\ell.Y)\ge \mu(Y)/\eta$ by the choice of~$\ell$.\end{proof}

\section{Proof of Theorem~\ref{th:DE}}\label{se:DE}

\subsection{Proof of Theorem~\ref{th:DE} for $\SO(n)\actson \I S^{n-1}$, $n\ge 3$}\label{se:SO}

Here, $\Omega$ is the sphere $\I S^{n-1}$ with the uniform probability measure~$\mu$. It was shown independently  by Margulis \cite{Margulis80} and Sullivan \cite{Sullivan81} for $n\ge 5$, and by Drinfel'd \cite{Drinfeld84} for $n=3,4$, that
the action $SO(n)\actson \sphere^{n-1}$ has spectral gap. By Proposition~\ref{pr:Spectral}, the whole space $\I S^{n-1}$ is a domain of expansion.

In order to finish the proof, it is enough to show that every $A\in\C B\cap\CU$ is a domain of expansion. By the definition of $\CU$, $A$ covers some non-empty open set and thus also covers the whole sphere~$\I S^{n-1}$. Of course, the sphere $\I S^{n-1}\supseteq A$ also covers $A$. Now, Lemma~\ref{lm:Cover} gives that $A$ is a domain of expansion.  (Alternatively, we could have just quoted Remark~\ref{re:DoE} for this step.) Thus the action $\SO(n)\actson \I S^{n-1}$ is indeed expanding.

\subsection{Proof of Theorem~\ref{th:DE} for $G_2\actson\I R^2$}

Recall that $G_2$ is the subgroup of affine bijections of $\I R^2$ generated
by the special linear group $\SL(2,\I Z)$ and all translations. In order to prove that the action $a:G_2\actson\I R^2$ is expanding, we consider another action~$b$ defined as follows.

Identify the torus $\I R^2/\I Z^2$ with $X:=[0,1)^2$ and let $\nu$ denote the uniform probability measure on Borel subsets of~$X$. 
The group $\SL(2,\I Z)$ acts naturally on $\I R^2$, commuting with the reduction of vectors modulo~$\I Z^2$. Thus we obtain the standard measure-preserving action $b:\SL(2,\I Z)\actson (X,\nu)$.
A classical result of Rosenblatt \cite{Rosenblatt81} states that $b$ has spectral gap. By Proposition~\ref{pr:Spectral}, the set $X$ is a domain of expansion for the action~$b$. 

\hide{Take an arbitrary bounded set $A\subset\I R^2$ that covers a non-empty open set. Since $G_2$ acts transitively on $\I R^2$ by homeomorphisms, $A$ also covers the compact set $\O X\subset \I R^2$. 
Since $G_2$ contains all translations, $X$ covers any bounded set, in particular, it covers~$A$.
Thus, by Lemma~\ref{lm:Cover}, }

By Remark~\ref{re:DoE},
it is enough to show that $X$ is a domain of expansion for
the paradoxical action~$a$.
Let $\pi\colon \R^2\to X$ be the natural projection that reduces each coordinate modulo~$1$. For $\gamma\in \textrm{SL}(2,\Z)$, let $\gamma'$ denote the corresponding element of $G_2$ under the natural inclusion of $\textrm{SL}(2,\Z)$ into~$G_2$. For example, the restriction of the composition $\pi\circ a(\gamma',\cdot)$ to $X$ is equal to $b(\gamma,\cdot)$, the $b$-action of~$\gamma$.

Take any $\eta>0$. Since $X$ is a domain of expansion for the action $b$, there exists an $\eta$-expanding finite set $S\subset\SL(2,\I Z)$ for $X$ under the action $b$. We construct
an $\eta$-expanding set $T$ for $X\subset\I R^2$ under the action~$a:G_2\actson\I R^2$ as follows.
For every $\gamma\in S$ and $(m,n)\in\I Z^2$ such that  $\gamma'.X$ intersects the square $[m,m+1)\times [n,n+1)$,  add the product $t_{m,n}^{-1}\gamma'\in G_2$ into $T$, where $t_{m,n}\in G_2$ is the translation by vector $(m,n)$.
Since $|S|<\infty$, the constructed set
$T$ is finite too. Furthermore, for every $\gamma\in S$ and $U\subseteq X$, we have that $\ga.U=\pi(\gamma'.U)\subseteq T.U\cap X$:
indeed, for every integer square intersecting $\gamma'.U$, the set $T$ contains the composition of $\gamma'$ with the integer translation moving this square back to~$X=[0,1)^2$. Thus,  $S.U\subset T.U\cap X$ and, if $U$ is measurable, then
 $$
 \mu(T.U\cap X)\ge \mu(S.U)
 \ge
 \min\big((1-\eta)\,\mu(X),\,{\mu(U)}/{\eta}\big),$$
 where the last inequality follows from the facts that~$S$ is $\eta$-expanding for $(X,\nu)$ under the action~$b$ and the measures $\nu$ and $\mu$ coincide on~$X\supset S.U$. Thus $T\subset G_2$ is an $\eta$-expanding set for~$X$. As $\eta>0$ was arbitrary, $X$  is a domain of expansion for the action $a:G_2\actson \I R^2$, as desired.

\subsection{Proof of Theorem~\ref{th:DE} for the hyperbolic space $\I H^n$}\label{se:Hn}

For an introduction to hyperbolic spaces see 
e.g.\ Bridson and Haefliger~\cite[Section~2]{BridsonHaefliger:msnpc} or Ratcliffe~\cite{Ratcliffe06fhm}. One representation
of $\I H^n$ (that we will use here) is to take the bilinear form 
\begin{equation}\label{eq:form}
\langle u, v\rangle_{n,1}:=-u_{n+1}v_{n+1}+\sum_{i=1}^n u_iv_i,\quad u,v\in\I R^{n+1},
\end{equation} 
identify $\I H^n$ with upper sheet of the hyperboloid  
$$
\C H:=\{u\in \I R^{n+1}\mid \langle u, u\rangle_{n,1}=-1\}
$$ (namely, the sheet where $u_{n+1}>0$), and
define the metric $d$ by $\cosh d(u,v)=-\langle u, v\rangle_{n,1}$ for $u,v\in \I H^n$. 
The group of isometries of $\I H^n$
can be identified with $O(n,1)_0$, the group of $(n+1)\times (n+1)$-matrices which leave
the bilinear form in~\eqref{eq:form} invariant 
and do not swap the two sheets of $\C H$; see~\cite[Theorem~2.24]{BridsonHaefliger:msnpc} or~\cite[Theorem~3.2.3]{Ratcliffe06fhm}.
The group $\iso(\I H^n)$ of orientation-preserving isometries of $\I H^n$ corresponds to the 
subgroup $\SO(n,1)_0$ of index 2 in $O(n,1)_0$,  which 
consists of 
matrices with determinant 1. The space $\I H^n$ is equipped with an isometry-invariant measure $\mu$ whose push-forward
under the projection on the first $n$ coordinates of $\I R^{n+1}$ has density 
\beq\label{eq:HnDensity}
\rho(x_1,\ldots,x_n):=(1+(x_1^2+\ldots+x_n^2))^{-1/2}
\eeq
with respect to the Lebesgue measure on~$\I R^n$.

\begin{proof}[Proof of Theorem~\ref{th:DE} for $\iso(\I H^n)\actson\I H^n$, $n\ge 2$.] First, we show that Theorem~\ref{th:BIS:A} applies to
	the group $\Ga=\iso(\I H^n)=\SO(n,1)_0$, obtaining a countable subgroup $\Lambda$. Then we show that the expansion of $\Lambda\actson (\Ga,m_\Ga)$, where $m_\Ga$ is a left Haar measure on $\Ga$,
	can be transferred to the action~$\Lambda\actson (\I H^n,\mu)$.
	
	The Lie algebra of $\SO(n,1)_0$ (or $\SO(n,1)$) is $\G{so}(n,1)$ which consists of $(n+1)\times (n+1)$-matrices 
	$M$ such that
	\begin{equation}\label{eq:son1}
	M^TI_{n,1}+I_{n,1}M=0,
	\end{equation} 
	where $I_{n,1}$ is the diagonal matrix having $n$ entries equal to 1 and the last entry equal to~$-1$.
	It is well-know that $\G{so}(n,1)$ is simple; in fact, a complete characterisation of simple real Lie algebras is
	available,
	see e.g.\ Knapp~\cite[Theorem~6.105]{Knapp:lgbi}.
	
	The linear system of equations~\eqref{eq:son1} has integer coefficients, so we can choose a basis $\G B$
	for $\G{so}(n,1)$ that consists of matrices with all entries rational.
	
	The adjoint $\Ad(\gamma)$ for $\gamma\in \SO(n,1)_0$ maps $M\in \G{so}(n,1)$ to the matrix product~$\gamma M\gamma^{-1}$. If a matrix $\ga$ has algebraic entries, then so does its inverse; it follows that the matrix of the map $\Ad(\ga)$ when expressed in the basis $\G B$ has all entries algebraic.
	Thus the only non-trivial remaining assumption of Theorem~\ref{th:BIS:A} is the existence of a countable dense set $X\subset \SO(n,1)_0$
	such that each $\gamma\in X$ as a matrix has algebraic entries (as then 
	we can take $\Lambda$ to be the subgroup generated by~$X$).
	We can identify $\SO(n,1)$ with the variety in $V\subset \I R^{(n+1)\times (n+1)}$ defined by the system of polynomials with
	integer coefficients, stating that the determinant is $1$ and the bilinear form in~\eqref{eq:form} is preserved.
	Since $\SO(n,1)_0$ is a connectivity component of $\SO(n,1)$, the existence of $X$ follows from the following general lemma.
	
	\begin{lemma} Let $f_1,\ldots,f_m\in\I Z[x_1,\ldots,x_r]$ be polynomials 
		with integer coefficients. Let 
		$$
		V:=\{x\in\I R^r\mid \forall i\in[m]\ f_i(x)=0\}
		$$ 
 be the real variety defined by these polynomials. Then the set of vectors in $V$ with all entries algebraic is dense in 
		$V$ (in the standard topology on $\I R^r$ generated by open Euclidean balls).
	\end{lemma}

	\begin{proof} Although this lemma has surely been proved before, we could not find a suitable statement (of the real case) anywhere in print. So we present our proof.
		
		One of the consequences of the Tarski-Seidenberg Theorem~\cite{Tarski51,Seidenberg54} is that the ordered field of reals $\I R$ and the ordered field $\I A$ of real algebraic numbers satisfy the same set of first-order sentences, where the language includes the constants $0$ and $1$, the multiplication and addition functions, the equality relation, and the binary order relation. For an $r$-vector $x$, we can express the statement that $f_i(x)=0$ for each $i\in[m]$
				 in first-order logic. 
		Thus, for every Euclidean ball $B\subseteq \I R^n$ whose centre and radius are rational, 
		$B\cap V=\emptyset$ if and only if $B\cap V\cap \I A^n=\emptyset$.
		
		Now, suppose on the contrary to the claim that we have some $x\in V$ and rational $r>0$ such that the radius-$r$ ball $B\subset \I R^r$ around $x$ has no algebraic points from~$V$. Pick a rational vector $x'\in\I R^n$ within distance $r/3$ from $x$ and let $B'$ be the ball around $x'$ of radius $r/2$. Then $B'\subseteq B$, so $B'\cap V$ has no algebraic points. However, $B'\cap V$ is non-empty as it contains $x$, a contradiction. 
	\end{proof}

	Thus, by Theorem~\ref{th:BIS:A} the action $b:\Lambda\actson(\Gamma,m_\Gamma)$ of the countable subgroup $\Lambda$ of $\Gamma$ generated by $X$ has local spectral gap with respect to every measurable subset of $\Gamma$ with compact closure and non-empty interior.
	We will show that the restriction $a'$ of the measure-preserving action $a:\Gamma\actson (\I H^n,\mu)$ to $\Lambda$ is expanding, thus finishing the proof.

	\renewcommand{\i}{f}
	Take any Borel $B\subset \I H^n$ which has non-empty 
	interior and compact closure.  Let us show that $B$ is a domain of expansion. 
	
	One can derive from the formula for the hyperbolic distance that
	the topology on $\I H^n$ is the induced topology from~$\I R^{n+1}$. Thus $B$
	is a bounded subset of~$\I R^{n+1}$. 
	Since the density $\rho$ is uniformly bounded (namely, by 1), 
	we have that $\mu(B)<\infty$. Also, $\mu(B)>0$ as $\rho>0$
	everywhere. 	
	
	Let $x_0:=(0,\ldots,0,1)\in \I H^n$ and define $\i: \Gamma\to 
		\I H^n$ by $\i(\gamma):=a(\gamma, x_0)$, that is, under the assumed identification
		$\I H^n\subset \I R^{n+1}$, $\i(\gamma)$ the application of the matrix $\gamma$ to the vector~$x_0$.
		Define 
	\begin{equation}\label{eq:B'=InverseB}
	B':=\i^{-1}(B)=\{\gamma\in \Gamma\mid \gamma.x_0\in B\}.
	\end{equation}
	As it is easy so see, $a:\Gamma\times\I H^n\to\I H^n$ is continuous. Thus $\i$ is also continuous and $B'\subset \Gamma$ is a Borel set. 	
	Let $U\not=\emptyset$ be the interior of $B$. Fix $u\in U$ and $\gamma_0\in \Gamma$ 
	with $\gamma_0.x_0=u$. Again, by the continuity of $\i$ there is an open set in 
	$\Gamma$ around $\gamma_0$ that lies entirely inside $B'$, so $B'$ has a non-empty 
	interior.
	
	Let us argue that the closure $\O{B'}$ of $B'$ is compact. Take an arbitrary infinite sequence $\gamma_1,\gamma_2,\ldots\in B'$. Viewing $\I H^n$ as a subset of $\I R^{n+1}$, fix some $n$ elements 
	$x_1,\ldots,x_n\in \I H^n$ so that the $n+1$ vectors $x_0,x_1,\ldots,x_n\in\I R^{n+1}$ are linearly independent.
	Each isometry $\gamma_i\in\iso(\I H^n)$ can be represented by a linear $\I H^n$-preserving transformation $\I R^{n+1}\to \I R^{n+1}$ given by 
	some matrix $M_i\in \SO(n,1)_0$. The images of the special point $x_0$ by $M_i$ are in the compact set $\O B$, so by passing to a subsequence we can assume that they converge to some $z_0\in\O B$. Since each $M_i$ is an isometry of $\I H^n$, the images of $x_1,\ldots,x_n$ all lie in some large ball in $\I H^n$ around $z_0$ (and thus in some large ball in $\I R^{n+1}$). Again by passing to a subsequence, we can assume that, for every $j\in[n]$, $M_ix_j$ converges to some $z_j$ as $i\to\infty$. Let $X$, $Z$, and $X_i$ for $i\in \I N$ be the $(n+1)\times (n+1)$-matrices with columns respectively $x_0,\ldots,x_n$, $z_0,\ldots,z_n$, and $M_ix_0,\ldots,M_ix_n$. 
	By the finite-dimensionality, we have that $X_i\to Z$ as $i\to\infty$. Also, by the choice of the vectors $x_j$, the matrix $X$ is invertible. Thus $M_i=X_iX^{-1}$
	converges to $M:=ZX^{-1}$ as $i\to \infty$. This limiting matrix $M\in \SO(n,1)_0$ has to belong to $\O {B'}$, so this set is indeed compact.
	
	Thus, Theorem~\ref{th:BIS:A} applies to the set $B'$, giving that the action of $\Lambda$ on $(\Gamma,m_\Gamma)$ has local spectral gap with respect to this set.   This means by Lemma~\ref{lm:equivalence} that $B'$ is a domain of expansion. Let
	$\eta>0$ be arbitrary. Thus there are $\gamma_1,\ldots,\gamma_m\in \Lambda$ such that the $\eta$-expansion property, as defined in~\eqref{eq:DoE},
	is satisfied for every Borel $X\subset B'$. Let us argue that the same isometries are $\eta$-expanding for~$B$. Take any Borel $Y\subset B$.
	Let $Y':=\i^{-1}(Y)\in\C B$. Note that the $\i$-preimage of $(\cup_{i=1}^m \gamma_i.Y)\cap B$ is exactly 
	$(\cup_{i=1}^m \gamma_i .Y')\cap B'$; indeed we have
	$$f^{-1}(\gamma_i.y)=\{\gamma\in\Ga\mid \gamma_i^{-1}\gamma.x_0=y\}=\gamma_i\,\{\beta\in\Gamma\mid \beta.x_0=y\}=\gamma_i\,f^{-1}(y),\quad\mbox{for all $y\in Y$}.
	$$
	 It remains to show is that $\i$ is measure-preserving. This is
	exactly the statement of, for example,~\cite[Lemma~4 of Section~11.6]{Ratcliffe06fhm} (which follows with some work from the uniqueness of the Haar measure).
	Thus $B$ is a domain of expansion.
	
	By Remark~\ref{re:DoE}, this finishes the proof of the case $\iso(\I H^n)\actson \I H^n$ of Theorem~\ref{th:DE}.\end{proof}

\subsection{Proof of Theorem~\ref{th:DE} for $\SL(2,\I R)\actson \I R^2\setminus\{0\}$}\label{se:SL2R}

Here, we follow the same strategy as in Section~\ref{se:Hn}. One new caveat is that, unlike for the hyperbolic space, the stabiliser $H$ of a point is not a compact subgroup. So we cannot define $B'$ by the direct analogue of~\eqref{eq:B'=InverseB} as its closure would not be compact. We get around this by, essentially, taking  a compact subset of positive measure in each coset of $H$ in a  continuous way. Also, we could not find a version of \cite[Lemma~4 of Section~11.6]{Ratcliffe06fhm} that we could just cite here, so we prove it (as well as a few other claims) via direct explicit calculations.

\begin{proof}[Proof of Theorem~\ref{th:DE} for $\SL(2,\I R)\actson \I R^2\setminus\{0\}$.] Recall that $m_\Ga$ is a left Haar measure on $\Ga=\SL(2,\I R)$. 
We view each element $\gamma\in \Ga$
as a $2\times 2$-matrix $(\gamma_{ij})_{i,j=1}^2$. It is easy to to find a countable dense subgroup $\Lambda$ of $\SL(2,\I R)$: just take the
subgroup of matrices with all entries rational.
The Lie algebra $\G{sl}_2(\I R)$ of $\SL(2,\I R)$ consists of $2\times 2$ matrices with trace zero and is well known to be simple. For the
basis $\G B$, one can take, for example,
$$
X:=\Matrix{cc}{0&1\\ 0& 0},\quad Y:=\Matrix{cc}{0&0\\ 1&0},\quad Z:=\Matrix{cc}{1&0\\ 0&-1}.
$$
For $\gamma\in \SL(2,\I R)$, the adjoint $\Ad(\gamma)$  maps a $2\times2$-matrix $M=(M_{ij})_{i,j=1}^2\in \G{sl}_2(\I R)$ to $\gamma M\gamma^{-1}$. Clearly, we have $M=M_{1,2}X + M_{2,1}Y+M_{1,1}Z$. If we write the linear map $\Ad(\gamma)$ in
the basis $\G B$, then each entry of the corresponding $3\times 3$-matrix is, in fact, a quadratic polynomial with integer coefficients in the entries of the matrix $\gamma$.
\hide{Indeed, we have to take the $(1,2)$ entry of $\ga Z \eta$, $\eta= \ga^{-1}$, which is
	$$
	(\gamma_{1,1},\ga_{1,2})\Matrix{cc}{1 & 0\\ 0 & -1} \Matrix{c}{\eta_{1,2}\\ \eta_{2,2}}=(\gamma_{1,1},-\ga_{1,2})\Matrix{c}{\eta_{1,2}\\ \eta_{2,2}}= \ga_{1,2}\eta_{1,2}-\ga_{1,2}\eta_{2,2}.
	$$
}%
Therefore, if $\gamma\in \Lambda$ then $\gamma$ (and thus the matrix of $\Ad(\ga)$) has all entries rational.

So the conclusion of Theorem~\ref{th:BIS:A} applies to the  action $b:\Lambda\actson (\Ga,m_\Ga)$. In order to derive that the action $a:\Ga\actson\Om$ is expanding, where $\Om=\I R^2\setminus\{0\}$, we need some preparation.
Define
\begin{equation}\label{eq:H}
H:=\{\ga\in \Ga\mid \ga e_1=e_1\}=\{M(u)\mid u\in\I R\},
\end{equation}
 where
  $$\mbox{$e_1:=\Matrix{c}{1\\0}$\ \ and\ \  $M(u):=\Matrix{cc}{1&u\\0&1}$.}
 $$
 
The map $M$ gives an isomorphism between the topological groups $(\I R,+)$
and~$H$. Let $\rho$ be
the Haar measure on $H$ with $\rho(I)=1$, where $I:=\{M(u)\mid 0\le u\le 1\}$, that is, $\rho$ is the push-forward of the Lebesgue measure on $\I R$ by~$M$.

Clearly, the map $\i:\Ga\to \Omega$, where $\i(\ga):=\ga e_1$ for $\ga\in \Ga$, is continuous and the pre-images under $\i$ of the points in $\Omega$ are exactly
the left cosets of $H$. Consider the map $\sigma:\Omega\to \Ga$ defined by
$$
\Matrix{c}{x\\y}\mapsto \Matrix{cc}{x & \frac{-y}{{x^2+y^2}}\\ y & \frac{x}{{x^2+y^2}}},\quad \mbox{for }\Matrix{c}{x\\y}\in \Om.
$$

The map $\sigma$ is continuous on $\Omega=\I R^2\setminus\{0\}$ and $\sigma$ is the right inverse of $\i$:  $\i\,\circ\, \sigma=\Id_{\Om}$. 
By e.g.~\cite[Theorem~5.1.5]{Ratcliffe06fhm}, the function $\phi:\Om'\to \Ga$, defined by $\phi(x,h)=\sigma(x)h$ is a homeomorphism, where we let $\Om':=\Om\times H$. 
The space $\Om'$ comes with the measure $\mu':=\mu\times \rho$, the product  of
the Lebesgue measure $\mu$ on $\Om\subset \I R^2$ and the Haar measure $\rho$ on $H$.

Let us show that the push-forward of $\mu'$ by $\phi$ 
is a constant multiple of the Haar measure $m_\Ga$ on~$\Ga$. By the uniqueness of
the Haar measure, it is enough to argue that, for every $\ga\in \Ga$, viewed as the map  $\Ga\to\Ga$ where $\beta\mapsto \ga.\beta$, the composition
$\ga':=\phi^{-1}\circ \ga\circ \phi$ from $\Om\times H$ to itself preserves the measure $\mu\times \rho$. In this concrete case, it is easy to show this by writing explicit formulas. Namely, the inverse of $\phi$ is
$$
\Matrix{cc}{x&c\\ y & d}\mapsto \left(\Matrix{c}{x\\y},M(u)\right),\quad \mbox{where } u:=\frac{c+y/({x^2+y^2})}x=\frac{d-x/({x^2+y^2})}y.
$$
It routinely follows that $\ga'((x,y),M(u))=(\ga.(x,y), M(u+F))$, where $F=F(\ga,x,y)$
does not depend on~$u$.
\hide{
	Indeed, with $a=x$ and $b=y$,
	\begin{eqnarray*}
		\ga(\phi((a,b),M(u)))&=&\Matrix{cc}{\ga_{11}& \ga_{12}\\ \ga_{21}&\ga_{22}} \Matrix{cc}{a & \frac{-b}{\sqrt{a^2+b^2}}\\ b & \frac{a}{\sqrt{a^2+b^2}}} \Matrix{cc}{1&u\\ 0& 1}\\
		&=&\Matrix{cc}{\ga_{11}& \ga_{12}\\ \ga_{21}&\ga_{22}}\Matrix{cc}{a & au-\frac{b}{\sqrt{a^2+b^2}}\\ b & bu+ \frac{a}{\sqrt{a^2+b^2}}}\\
		&=&\Matrix{cc}{\ga_{11}a+\ga_{12}b & (\ga_{11}a+\ga_{12}b)u-\frac{\ga_{11}b+\ga_{12}a}{\sqrt{a^2+b^2}}\\ \ga_{21}a+\ga_{22}b & ... } 
	\end{eqnarray*}
	so
	$$
	w=\frac{(\ga_{11}a+\ga_{12}b)u+(\ga_{21}a+\ga_{22}b)\sqrt{(\ga_{11}a+\ga_{12}b)^2+(\ga_{21}a+\ga_{22}b)^2}}{\ga_{11}a+\ga_{12}b} = u+ F.
	$$}
We see that the bijection $\ga'$ is the measure preserving map $a(\ga,\cdot)$ in the first coordinate. Also, if the first coordinate is fixed, then the second coordinate of $\ga'$ corresponds to some translation of $\I R$
under the parametrisation $M$ from~\eqref{eq:H}. It follows by an application of Tonelli's theorem that $\ga'$ is measure-preserving, as claimed. 

Thus, by scaling the measure $m_\Ga$, we can assume that $\phi$ is a measure-preserving homeomorphism between $(\Om',\mu')$ and $(\Ga,m_\Ga)$. 
For the rest of the proof, it will be more convenient to replace the  action $b$ with its conjugate by $\phi$. Namely,  let the measure-preserving action $b':\Lambda\actson (\Om',\mu')$ be defined by $b'(\ga,(x,h)):=\phi^{-1}(b(\ga,\phi(x,h)))$ for $(x,h)\in\Om\times H$ and $\ga\in \Lambda$. 
By the calculations in the previous paragraph, for every $x\in \Om$, there is  the (unique) map $\ga^x:H\to H$ so that $b'(\ga,(x,h))=(a(\ga,x),\ga^x(h))$;
moreover, each $\ga^x$ under the isomorphism $H\cong\I R$ corresponds to some translation of $\I R$ and thus preserves the Haar measure~$\rho$. 

Now, we are ready to show that the action $a:\Ga\actson \Om$ is expanding. 
Take any Borel $B\subset \Om$ with non-empty interior and
compact closure. Let $B':=B\times I$. 
Clearly, $B'\subset \Om'$ has compact closure and non-empty interior and $\mu'(B')=\mu(B)$.
Since $\phi$ is a measure-preserving homeomorphism, Theorem~\ref{th:BIS:A} (when applied to $b$) gives
that the conjugated action $b'$
has local spectral gap with respect to every measurable subset of $\Om'$ with compact closure and non-empty interior. 
Thus there are a real $\kappa$ and a finite set $S\subset\Lambda$ such that
\begin{equation}\label{eq:B'}
\|f'\|_{2,B'}\le \kappa \sum_{s \in S} \|s.f'-f'\|_{2,B'},\quad \mbox{for all $f'\in L^2(\Om',\mu')$ with $\int_{B'} f'\dd \mu'=0$.}
\end{equation}

Let us show that the same choice of $\kappa$ and $S$ also witnesses the local spectral gap of the action $a$ with respect to $B$.  Take any $f\in L^2(\Om,\mu)$ with
$\int_B f\dd \mu=0$.  The idea is to apply the inequality in~\eqref{eq:B'}
to the function $(x,h)\mapsto f(x)$ except, in order to have an $L^2$-function, we set it to 0 at the points which do not matter when we consider local spectral gap for $B'$ under $S$ (namely, those points that do not appear in~\eqref{eq:B'}). So, let $f'(x,h):=f(x)$ for $(x,h)\in B'\cup S^{-1}.B'$ and let $f'$ be zero otherwise. The obtained function $f'$ is in
$L^2(\Om',\mu')$ because is it obtained  by patching together  compositions of $g\in L^2(\Om',\mu')$ with finitely many measure-preserving maps $b'(\ga,\cdot)$, $\ga\in S$, where $g(x,h):=f(x)$ if $h\in I$ and is set to $0$ otherwise. Since 
$f'(x,h)=f(x)$ for all $(x,h)\in B'$, we have $\int_{B'} f'\dd \mu'=\int_{B} f\dd \mu=0$. Thus~\eqref{eq:B'} applies. 
Since 
$f'(x,h)=f(x)$ for $(x,h)\in S^{-1}.B'$ and  $\ga^x$ preserves the measure~$\rho$ for each $x\in\Om$, we have for each $s\in S$ that
\begin{eqnarray}
\|s.f'-f'\|_{2,B'}&=& \int_{B\times I} (f'(s^{-1}.(x,h))-f'(x,h))^2\dd\mu'(x,h)
\nonumber
\\
&=& \int_{B\times I} (f(s^{-1}.x)-f(x))^2\dd(\mu\times \rho)(x,h)\ =\ \|s.f-f\|_{2,B}.
\nonumber
\end{eqnarray}
It follows by~\eqref{eq:B'} that 
$$
\|f\|_{2,B}=\|f'\|_{2,B'}\le \kappa \sum_{s \in S} \|s.f'-f'\|_{2,B'}=\kappa \sum_{s \in S} \|s.f-f\|_{2,B}.
$$
Thus the action $a:\Ga\actson \Om$ is expanding by Lemma~\ref{lm:equivalence} (and Remark~\ref{re:DoE}), as required.
\end{proof}

\subsection{Proof of Theorem~\ref{th:DE} for $\iso(\I R^n)\actson \I R^n$, $n\ge 3$}\label{se:Rn}

We give a few proofs of this case: by deriving it from known results in Section~\ref{se:RnCite}
and then giving  a more direct proof in Section~\ref{se:direct}.

\subsubsection{Derivation from known results}\label{se:RnCite}

Theorem~\ref{th:DE}\ref{it:Rn} can be rather straightforwardly derived from known (deep) results in a few different ways.

\begin{proof}[First proof of Theorem~\ref{th:DE} for $\iso(\I R^n)\actson\I R^n$, $n\ge3$.] Margulis \cite{Margulis82} showed that, for this action, every isometry-invariant positive mean defined on compactly 
supported bounded measurable functions is a scalar multiple of the integral with respect to the
Lebesgue measure. Also, it can be shown via Lebesgue's density theorem that every dense subgroup of $\iso(\R^n)$ acts ergodically on~$\R^n$. Thus the implication (1) $\Longrightarrow$ (4) of \cite[Theorem~7.6]{BoutonnetIoanaSalehi17} applies here and gives that the action $\iso(\I R^n)\actson\I R^n$ has local 
spectral gap with respect to every measurable set in~$\CU$. We conclude by 
Lemma~\ref{lm:equivalence} that the action $\iso(\I R^n)\actson\I R^n$ is expanding.
\end{proof}

Alternatively, the desired local spectral gap property of $\mathrm{Iso}(\I R^n)\actson \I R^n$ 
can be derived from  \cite[Theorem~A]{BoutonnetIoanaSalehi17} (which is
Theorem~\ref{th:BIS:A} here) in a similar way as it was done for the action $\iso(\I H^n)\actson \I H^n$ in Section~\ref{se:Hn}. Yet another proof is to use the following more recent
result of Boutonnet and Ioana~\cite[Theorem~A]{BoutonnetIoana20}.

\begin{theorem}[Boutonnet and Ioana~\cite{BoutonnetIoana20}]
	\label{th:BoutonnetIoana20} 
	Let $\Lambda$ be a countable dense subgroup of $\iso(\I R^n)$, $n\ge 3$, such
	that the left-translation action $\Theta(\Lambda)\actson \SO(d)$ has spectral gap, where $\Theta:\iso(\I R^n)\to \SO(n)$ denotes the natural quotient. Then the natural action $\Lambda\actson \I R^n$ has local spectral gap with respect to every measurable set with compact closure and non-empty interior.\qed
\end{theorem}

In order to apply Theorem~\ref{th:BoutonnetIoana20}, one can let $\Lambda$ be the subgroup generated by a finite subset of $\SO(n)\subset\iso(\I R^n)$ having the spectral gap property and some countable dense subset of $\iso(\I R^n)$.

\subsubsection{Approach via more direct computations}\label{se:direct}

In this  subsection we will reprove the case of $\iso(\I R^n)\actson\I R^n$, $n\ge 3$, of Theorem~\ref{th:DE} in a more direct way. We prove the case $n=3$ first and derive the general case $n\ge 4$ as a consequence. We will not use the results of Margulis \cite{Margulis82} nor Boutonnet et al~\cite{BoutonnetIoanaSalehi17,BoutonnetIoana20}. However, when proving the base case $n=3$, we still need as an input the spectral gap property of the action  of $\textrm{SO}(3)$ on the $2$-dimensional sphere. While the original proof of this by Drinfel'd~\cite{Drinfeld84} requires a fair amount of background, there are more elementary proofs now: 
see, for example,\ Benoist and de Saxc\'e~\cite{BenoistDesaxce16}.


The rest of this section is devoted to domains of expansion in $\R^n$. However, we start by proving a sufficient criterion for being a domain of expansion for a general action $\Ga\actson \Om$. 

Let us informally motivate the upcoming definitions. Suppose that we fix some $\rho>1$ and would like to show that the \emph{annulus}  
$Y:=\{y\in\R^3\colon  1 \le \|y\|_2\le \rho\}$
is a domain of expansion. For $U\subset Y$ and $z\in[1,\rho]$ let $U_z:=\{y\in U\mid \|y\|_2=z\}$ be the \emph{$z$-leaf} of $U$. For $i\in [3]$, let $\mu^i$ denote the $i$-dimensional Hausdorff measure on $\I R^3$.

\newcommand{\D}{c}

By the spectral gap property of $\SO(3)\actson (\I S^2,\mu^2)$ and Proposition~\ref{pr:Spectral}, the sphere $\I S^2$ is a domain of expansion under this action. So, for every $\delta>0$, there is a finite $\delta$-expanding set~$S_\de\subset \SO(3)$. For each $z\in [1,\rho]$, the group $\SO(3)$ also acts on $(Y_z,\mu^2)$ (which is just the sphere of radius~$z$) and the set $S_\delta$ is also $\delta$-expanding for $Y_z$. Thus, for every Borel $U\subset Y$, the leaf $U_z$ under the action of $S_\delta$
occupies at least $(1-\delta)$-fraction of $Y_z$ or expands by factor at least~$1/\de$ in the measure~$\mu_z$. 
It follows that, if $U$ does not expand in measure under $S_\delta$, then this means
that the set $U$ is  ``close'' to a \emph{ring set} (a union of some spheres~$Y_z$). 

In order to deal with such sets, we add a finite set $T$ of isometries of $\I R^3$ with the property that there is $\D>0$ such that if $U$ is any Borel ring set then $\mu^2((T.U)_z)\ge \D\mu(U)$ for every~$z\in [1,\rho]$. We will call such a set $T$ a \emph{diffuser}. Informally speaking, a diffuser spreads any ring set across all radii of interest fairly uniformly. If we apply a spherical expanding set $S_\be$ with $\beta\ll \delta$ to any such ``uniformly spread'' set $T.U$, then we ensure that, for each $z\in [1,\rho]$, its $z$-leaf $(T.U)_z$ expands or occupies most of $Y_z$. It follows from the above properties that $S_\be.T$ has a good expansion property when applied to any (almost) ring set. Thus, our proof strategy is as follows: if $U$ expands under $S_\delta$ then we are done; otherwise $S_\delta.U$ is close to a ring set and 
consequently expands under $S_\be T$. 
The exact value of $\rho$ will be chosen to simplify finding a diffuser set; in fact, we will choose $\rho$ so that the diffuser can be taken to be a one-element set (see  Figure~\ref{fig-diffuser} and Lemma~\ref{lm:CubeDiffuser}).

Let us give all formal general definitions (that are motivated by the above discussion). Assumptions~\ref{as:1} and~\ref{as:mu} apply everywhere in this section.

\begin{definition}\label{de:foliation}
A \emph{foliation} of a Borel set $Y\subset \Omega$ is a pair $((Z,\C B_Z,\nu),(Y_z,\mu_z)_{z\in Z})$, where 
 \begin{enumerate}[(i),nosep]
 	\item\label{it:foliation1} $(Z,\C B_Z,\nu)$ is a standard measure space with $0<\nu(Z)<\infty$, 


\item\label{it:foliation2} the sets $Y_z$, $z\in Z$, are pairwise disjoint subsets of $Y$,

\item\label{it:foliation3} for every Borel $X\subset Z$, the set $Y_X:=\cup_{z\in X} Y_z$ is a Borel subset of $Y$,

\item\label{it:foliation4} each $\mu_z$ is a finite measure on $(\Omega,\C B)$ supported on $Y_z$ such that for every Borel set $U\subset Y$ the function $z\mapsto \mu_z(U)$ is Borel and integrable,
and it holds that
\begin{equation}\label{eq:MF}
\mu (U) = \int_Z \mu_z (U) \dd\nu(z).
\end{equation}
\end{enumerate}
\end{definition}


Note that if $\nu$ happens to be the push-forward of $\mu$ under the map $Y\to Z$ that sends each $Y_z$ to $z$ (when necessarily $\mu_z$ is a probability measure for $\nu$-a.e.\ $z\in Z$), then~\eqref{eq:MF} states that the map $z\mapsto \mu_z$ gives a disintegration of the measure~$\mu$, see e.g.~\cite[Section 5.1.2]{VianaOliveira:fet}. 

Under Definition~\ref{de:foliation}, the \emph{support} of a Borel set $U\subset \Omega$ is defined as
$$
\supp(U) :=\{z\in Z\colon \mu_z(U) >0\}.
$$ 
Of course, $\supp(U)=\supp(U\cap Y)$. Also, for $z\in Z$ and $X\subset Z$ we denote $U_z := U\cap Y_z$ and and $U_X :=\bigcup_{x\in X} U_{x}\subset Y$. Note that $U_z=U\cap Y_z$ is Borel by 
Item~\ref{it:foliation3} of the above definition. Also, since each $\mu_z$ is supported on $Y_z$,  we have $\mu_z(U) = \mu_z(U_z)$.

\begin{definition}\label{def:leaf-wise} Let $Y$ be a subset of $\Omega$ with a foliation $((Z,\C B_Z,\nu),(Y_z,\mu_z)_{z\in Z})$. For $\e>0$, we say that a finite set $S\subset \Ga$ is \emph{leaf-wise $\eps$-expanding} if, for every Borel $U\subset Y$, it holds that
$$
\mu_z (S.U)\ge \min\left((1-\eps)\,\mu_z(Y),\ \frac{\mu_z(U)}{\eps}\right),\quad\mbox{for every $z\in Z$}.
$$
 We say that $Y$ is a \emph{domain of leaf-wise expansion} if for every $\eps>0$ there is a leaf-wise
$\eps$-expanding finite set $S_\eps\subset \Ga$.

For $\D>0$, we say a finite set $T\subset \Ga$ is a \emph{$\D$-diffuser} for $Y$ if for every Borel $R\subset Z$ and every $z\in Z$ we have
\beq\label{eq-diffuser}
\mu_z (T.Y_R) \ge \D{\,}\mu(Y_R)\,/\,\nu(Z). 
\eeq
 A finite set  $T\subset \Ga$ is called a \emph{diffuser} if it is a $\D$-diffuser for some~$\D>0$.
\end{definition}

The following lemma states, informally speaking,  that a diffuser spreads well not only ring sets $Y_R$ 
but also those sets that are ``close'' to them.

\begin{lemma}\label{lm:diffuser}
Let $(Y_z,\mu_z)_{z\in Z}$ be a foliation of $Y\subset \Omega$.
Let $T$ be a diffuser for $Y$ and let $\D>0$ satisfy~\eqref{eq-diffuser}. Given $\eps\in (0,1)$, let $\de:=\eps\D/(\eps+|T|)$.
Then, for any Borel set $V\subset Y$ of positive measure satisfying $\mu_z(V)\ge (1-\de)\,\mu_z(Y)$ for each $z\in \supp(V)$, we have
 \beq\label{eq:admits-taking-transversals}
\nu\big(\left\{z\in Z\colon \mu_z(T.V) > \de{\,}\mu(V)/\nu(Z)\right\}\big) \ge (1-\eps)\,\nu(Z).
 \eeq
\end{lemma}
\begin{proof} Note that if we divide $\nu$ by some $\alpha>0$ and multiply each $\mu_z$ by the same constant $\alpha$,
then all statements of Definitions~\ref{de:foliation} and~\ref{def:leaf-wise} remain valid. (This will be used later in Remark~\ref{re:sample2} and this is why we divide by $\nu(Z)$ in~\eqref{eq-diffuser}.) Thus, we can assume without loss of generality that $\nu(Z)=1$.

Let $R:=\supp(V)$. Note that the set $R$ is Borel by Item~\ref{it:foliation4} of Definition~\ref{de:foliation}. The measure of $V_{Z\setminus R}$ is 0 by~\eqref{eq:MF}, so by removing this set from $V$  we may assume that $V_z=\emptyset$ for $z\in Z\setminus R$. Let $W:=Y_{R}\setminus V$.
Note that $Y_R = V\sqcup W$ and $\mu(Y_R)\neq 0$. Define
$$
R' := \left\{z\in Z\colon \mu_{z}(T.W) \ge \left(\D-\de\right)\mu(Y_R)\right\}.
$$ 
 Since $T$ is a $c$-diffuser,  we obtain that, for all
 $z \in Z\setminus R'$,
$$
\mu_z(T.V)=\mu_z(T.(Y_R\setminus W)) \ge \D{\,}\mu(Y_R) - \mu_z(T.W) > \de\,\mu(Y_R)\ge \de\,\mu(V).
$$

Thus, in order to finish the proof, it is enough to show that $\nu(R') \le\eps$.
 By~\eqref{eq:MF}, we have that 
 $$
 \mu(T.W)\ge \mu(T.W\cap Y_{R'})=\int_{R'}\mu_z(T.W)\dd\nu(z)\ge (\D-\delta)\,\mu(Y_R)\,\nu(R').
 $$
 Since $\mu_z(W)=\mu_z(Y\setminus V) \le\de{\,}\mu_z(Y)$ for each $z\in R$ by the assumption on $V$ and since $W\subset Y_R$, we have, again by \eqref{eq:MF}, that 
$
\mu(W)\le \de{\,}\mu(Y_R). 
$
Hence
$$
(\D-\de)\,\mu(Y_R)\,\nu(R')\le \mu(T.W)\le |T|\,\mu(W)\le |T|\,\de\,\mu(Y_R),
$$
giving the required bound $\nu(R')\le \eps$ by the choice of~$\delta$.
\end{proof}

We are ready to state our criterion for being a domain of expansion. Recall that $S_\eta$ for $\eta>0$ is  a finite leaf-wise $\eta$-expanding subset of~$\Ga$. Let us additionally assume that $S_\eta\ni e$.

\begin{prop}\label{prop:abstract-expansion}
Let $((Z,\C B_Z,\nu),(Y_z,\mu_z)_{z\in Z})$ be a foliation of Borel $Y\subset \Omega$. Suppose that there is $M\ge 1$  such that for all $z \in Z$ we have $M\ge\mu_z(Y)\ge\frac{1}{M}$.
If $Y$ is a domain of leaf-wise expansion which admits a diffuser $T$, then $Y$ is a domain of expansion.

More precisely, given  $\eta\in (0,1/2)$ and $\D>0$ such that $T$ is a $\D$-diffuser, let $\eps:=\eta/(2M^2)$, $\de:=\eps \D/(\eps+|T|)$, and $\be :=  \de\eps/(2M)$. Then  $R:=S_{\be}\, T \, S_\de\cup S_\de$ is an $\eta$-expanding set for $Y$.
\end{prop}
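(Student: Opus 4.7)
Given a measurable $U \subset Y$, the goal is to show $\mu(R.U \cap Y) \ge \min((1-\eta)\mu(Y),\,\mu(U)/\eta)$. The plan is to partition the leaves of the foliation by how ``full'' $U$ sits in each of them: let $Z_1 := \{z \in Z : \mu_z(U) \le \de(1-\de)\mu_z(Y)\}$ be the \emph{light} part (where leaf-wise $\de$-expansion delivers the full factor $1/\de$), $Z_2 := Z \setminus Z_1$ the \emph{heavy} part (where $S_\de$ essentially fills each leaf, making the diffuser applicable), and write $U_i := U_{Z_i}$. I split on whether $\mu(U_1) \ge \mu(U)/2$ or $\mu(U_2) > \mu(U)/2$.

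If $\mu(U_1) \ge \mu(U)/2$, the light case is immediate from $S_\de \subset R$ alone: for $z \in Z_1$ the $\min$ in the definition of leaf-wise $\de$-expansion is the second term, so $\mu_z(S_\de.U_1) \ge \mu_z(U_1)/\de$; integrating via \eqref{eq:measured-foliation} applied to $S_\de.U_1 \cap Y$ yields $\mu(S_\de.U \cap Y) \ge \mu(U_1)/\de \ge \mu(U)/(2\de)$. Since $M \ge 1$ (forced by $1/M \le \mu_z(Y) \le M$) and the Lemma~\ref{lem-extra-diffuser} prescription gives $\de \le \eps \le \eta/(3M^2) \le \eta/2$, this is $\ge \mu(U)/\eta$.

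In the heavy case $\mu(U_2) > \mu(U)/2$, I set $V := S_\de.U_2 \cap Y_{Z_2}$. The heavy condition forces $\mu_z(U_2)/\de > (1-\de)\mu_z(Y)$ on $Z_2$, so leaf-wise $\de$-expansion gives $\mu_z(V) \ge (1-\de)\mu_z(Y)$ for $z \in \supp(V) \subset Z_2$, whence $\mu(V) \ge (1-\de)\mu(Y_{Z_2}) \ge (1-\de)\mu(U)/2$. Lemma~\ref{lem-extra-diffuser} applied to $V$ then produces $Z_3$ with $\nu(Z_3) > (1-\eps)\nu(Z)$ such that $\mu_z(TV) > \de\mu(V)$ on $Z_3$. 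Applying leaf-wise $\be$-expansion to $TV \cap Y \subset Y$ (and noting $\mu_z(TV \cap Y) = \mu_z(TV)$ by support of $\mu_z$ on $Y_z$), for every $z \in Z_3$ I obtain
\[
\mu_z(S_\be T S_\de.U) \ge \mu_z(S_\be(TV \cap Y)) \ge \min\bigl((1-\be)\mu_z(Y),\,\de\mu(V)/\be\bigr).
\]

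Integrating this bound over $Z_3$ splits on the sign of $\de\mu(V)/\be - (1-\be)M$. If $\de\mu(V)/\be \ge (1-\be)M$ the minimum pointwise equals $(1-\be)\mu_z(Y)$, and using $\mu(Y_{Z\setminus Z_3}) \le M\eps\nu(Z) \le M^2\eps\mu(Y) = \eta\mu(Y)/3$ (from $\nu(Z) \le M\mu(Y)$) yields $\mu(R.U \cap Y) \ge (1-\be)(1-\eta/3)\mu(Y) \ge (1-\eta)\mu(Y)$. Otherwise $\mu(V) < (1-\be)\eps/2$ and $\mu(U) < (1-\be)\eps/(1-\de)$; lower-bounding the minimum by the constant $\min((1-\be)/M,\,\de\mu(V)/\be)$ and integrating against $\nu(Z_3) \ge (1-\eps)/M$, a short computation using $\be = \de\eps/(2M)$ reduces $\mu(R.U \cap Y) \ge \mu(U)/\eta$ to $(1-\de)(1-\eps) \ge 1/3$, which holds since $\de,\eps \le 1/3$. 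The main obstacle is precisely this constant-juggling: the three decisive inequalities $\de \le \eta/2$, $\be + \eta/3 \le \eta$, and $(1-\de)(1-\eps) \ge 1/3$ must all hold simultaneously under $\eps = \eta/(3M^2)$, $\be = \de\eps/(2M)$, and the Lemma~\ref{lem-extra-diffuser} freedom in choosing $\de$ small. All three reduce to $\de,\eps$ being bounded above by fixed fractions, which uses $M \ge 1$. The extra generator $S_\be T \subset R$ plays no role in this argument, but its presence only enlarges $R.U$ and so cannot hurt.
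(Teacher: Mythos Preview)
Your proof is correct and follows essentially the same two-case strategy as the paper: split the leaves into light/heavy according to whether leaf-wise $\de$-expansion already saturates, handle the light half by $S_\de$ alone, and in the heavy half push $S_\de.U$ through the diffuser lemma and then leaf-wise $\be$-expand. The only cosmetic differences are that the paper thresholds on $\mu_z(S_\de.U)$ rather than $\mu_z(U)$, subcases on $\mu(U)\gtrless\eps$ rather than on which term of the $\min$ is active, and defines $V:=(S_\de.U)_X\cup U_X$ (the inclusion of $U_X$ is precisely why the generator $S_\be T$ appears in $R$, whereas your $V$ makes it superfluous, as you observed).
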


\begin{proof} 
By scaling the measures $\nu$ and $\mu$ by the same constant, we can assume for convenience that $\nu(Z)=1$.
By taking $R:=Z$ in~\eqref{eq-diffuser} and integrating it over all $z\in Z$, we see that $\D\le 1$. Thus
$\de\le \D \eps\le \eps\le 1/4$ and $\be\le 1/32$.

Take an arbitrary Borel set $U\subset Y$. We need to lower bound $\mu(R.U\cap Y)$. Define 
 $$
  X:=\{z\in Z\mid \mu_z(S_\de.U) \ge (1-\de)\,\mu_z(Y)\}.
 $$
 
\medskip\noindent \textbf{Case 1.} Suppose that $\mu(U_{Z\setminus X}) > \mu(U)/2$.

\medskip\noindent Since $S_\de$ is leaf-wise $\de$-expanding, we have for every $z\in Z\setminus X$ that $\mu_z(S_\de.U) \ge \mu_z(U)/\de$. Thus, by~\eqref{eq:MF}, 
we obtain the required lower bound as follows:
 $$
\mu(R. U\cap Y)  \ge \mu(S_\de.U\cap Y) 
\ge  \int_{Z\setminus X} \frac{\mu_z(U)}{\de}\dd\nu(z) = \frac{1}{\de}\, \mu(U_{Z\setminus X})\ge \frac1\delta\, \frac{\mu(U)}2 \ge \frac{\mu(U)}\eta.
 $$
The last inequality follows since, by $\D\le 1$, we have $\de\le \D\eps= \D\eta/(2M^2)\le \eta/2$.

\medskip\noindent \textbf{Case 2.} Suppose that Case 1 does not hold, that is, $\mu(U_X) \ge \mu(U)/2$.

Define $V:=(S_\de.U)_X \subset Y$.
Since $e\in S_\de$, we have $\mu(V) \ge \mu(U_X)\ge \mu(U)/2$. 
Let 
$$
 W:=\left\{z\in Z\colon \mu_z(T.V) >\de\, \mu(U)/2\right\}.
$$
 By Lemma~\ref{lm:diffuser} (that is, by our choice of $\delta$), we have
\beq\label{eq:nuW}
  \nu\left(W\right) > 1-\eps.
\eeq

We consider two subcases. First, suppose that $\mu(U) >\eps$.  For $z\in W$,  we have $\mu_z(T.V)> \delta\eps/2=\beta M\ge \beta \mu_z(Y)$ 
and thus $S_\beta$ cannot increase the $\mu_z$-measure of $(T.V)_z$ by factor $1/\beta$ or larger. Since
$S_\beta$ is leaf-wise $\beta$-expanding, it follows that  $\mu_z\left(S_{\beta} T.V \right)\ge (1-\beta)\,\mu_z(Y)$ for every $z\in W$. 
 Thus, by~\eqref{eq:MF} and~\eqref{eq:nuW},
 \begin{eqnarray*}
  \mu(R.U\cap Y) &\ge & 
\mu\left(S_{\beta} T.V\cap Y\right) 
 \ \ge\  
\int_W \left(1-\beta\right)\mu_z(Y)\dd\nu(z)\\
 &\ge&
\int_Z  \left(1-\beta\right)\mu_z(Y) \dd\nu(z)  \, -\, \eps M\\ 
 &\ge&
\left(1-\beta\right)\mu (Y) -  \eps M^2\mu(Y) \ \ge\ (1-\eta)\,\mu(Y).
\end{eqnarray*}
 as desired. (Note that $\beta+\eps M^2=\eps(\de/(2M)+M^2)=\eta(\delta/(4M^3)+1/2)\le \eta$.)

Finally, suppose that $\mu(U)\le \eps$. Let 
$$B:=\left\{z\in W\mid  \mu_z\left(S_{\beta} T.V \right)< (1-\beta)\,\mu_z(Y)\right\}.
$$
 For each $z\in B$, since $S_{\beta}$ is $\beta$-expanding for $Y_z$ and $B\subset W$, we have
$$
  \mu_z\left(S_{\beta} T.V \right)= \mu_z(S_\beta.(T.V)_z)\ge  \frac{\mu_z(T.V)}{\beta}\ge \frac{\de\, \mu(U)}{2\beta}.
$$
Thus, using~\eqref{eq:MF} again, we obtain
\begin{eqnarray*}
\mu(R.U\cap Y) &\ge & \mu\left(S_{\beta} T.V\cap Y\right) \ \ge\ \int_{W\setminus B}(1-\beta)\,\mu_z(Y) \dd\nu(z) + \int_{B}  \frac{\de\,\mu(U)}{2\beta}\dd\nu(z), \\
&\ge& \frac{1-\beta}M\,\nu(W\setminus B)+ \frac{\de\,\mu(U)}{2\be}\, \nu(B).
 \end{eqnarray*}
 We would like to argue that this is at least $\mu(U)/\eta$. Since both parts are linear in $\mu(U)$ it is enough to check only the extreme values of~$\mu(U)$. Suppose 
 that $\mu(U)=\e$ (as the inequality for $\mu(U)=0$ trivially holds by $\beta<1$). The coefficient at $\nu(B)$, namely $\de \e/(2\be)=M\ge 1$ is clearly larger than the coefficient at $\mu(W\setminus B)$. Thus (for $\mu(U)=\e$) the lower bound on $\mu(R.U\cap Y)$ is at least 
$$ 
 \frac{1-\beta}M\, \nu(W)\ge \frac{(1-\beta)(1-\eps)}M\ge \frac{31}{32}\cdot \frac34 \cdot \frac1{M}\ge \frac1{2M^2}=\frac{\mu(U)}{\eta},
$$
 as required.\end{proof}

We proceed to apply Proposition~\ref{prop:abstract-expansion}. Here, we have that $\Om = \R^{3}$ with the $L^2$-norm $\|\cdot\|$ and $\Ga = \iso(\R^{3})$. Define  \beq\label{eq:Y}
 Y:=\{y\in\R^3\colon 1 \le \|y\|\le \rho\},
 \eeq 
 where $\rho:= 1+\sqrt{2}/2$. For $d=1,2,3$, let $\mu^d$ denote the $d$-dimensional Hausdorff measure on $\I R^3$ scaled so that $\mu^d([0,1]^d\times\{0\}^{3-d})=1$. Let $\mu:=\mu^3$ be the Lebesgue measure on $\I R^3$.
The foliation of $Y$ is given by the concentric spheres, where $(Z,\nu)$ is the interval $[1,\rho]$ with the Lebesgue measure. Here, $\mu_z$ is the restriction of $\mu^2$ to $Y_z:=\{y\in\I R^3\mid \|y\|=z\}$, the sphere of radius~$z$.
By our scaling, $\mu_z(Y_z)=4\pi z^2$ for $z\in Z$. 
The map $Y\to (Z,\nu)\times (\I S^2,\mu^2)$, which sends $x$ to $(\|x\|, x/\|x\|)$, is a measure-preserving Borel isomorphism. Since a product of two standard finite measure spaces gives a foliation by Tonelli's theorem,
we conclude that we indeed have a foliation of~$Y$.

\begin{figure}[h]%
  \center{\resizebox{0.3\textwidth}{!}{\input{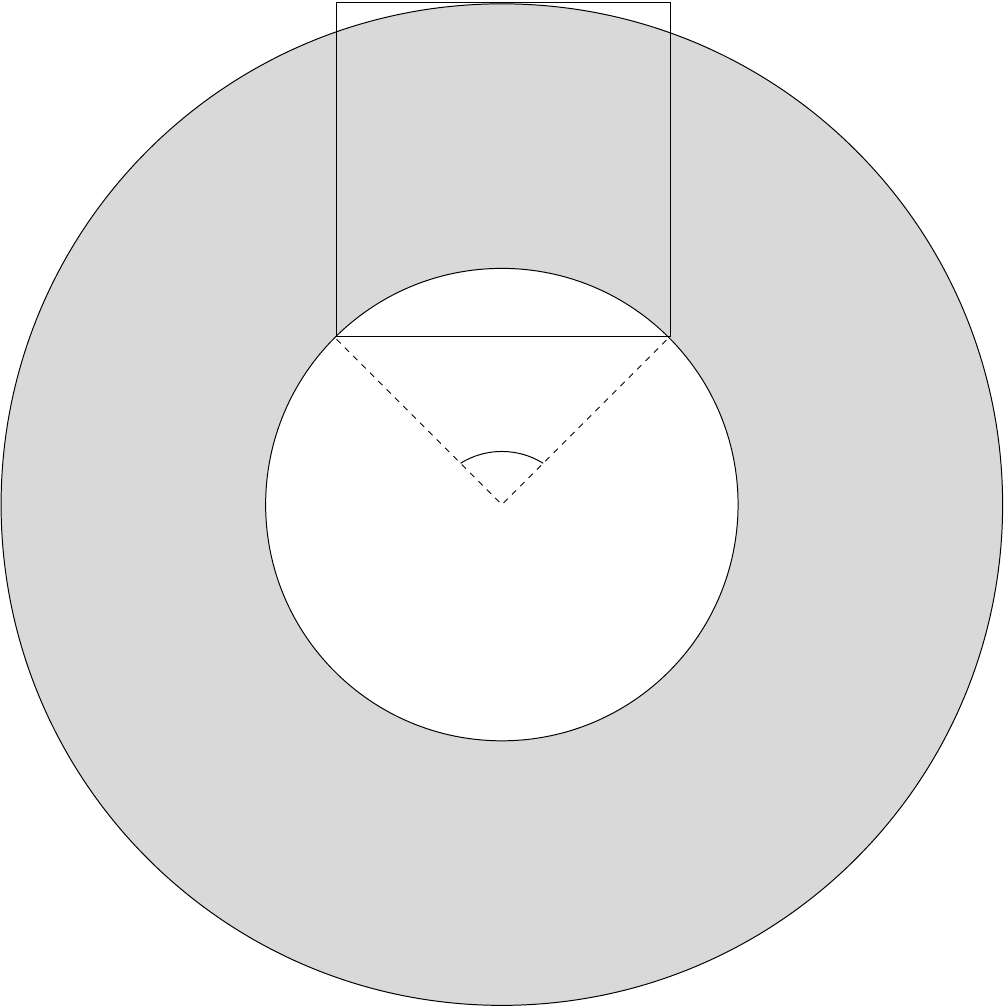_t}}}
  \caption{\small Two-dimensional analogue of the set $Y$ (in grey) together with the cube $K$. The angle $\theta$ is the angle between two radii which pass through opposite vertices of the face $A$, and it is equal to ${\pi}/{2}$.}
  \label{fig-diffuser}
\end{figure}
The following lemma is an exercise in elementary geometry. 

\begin{lemma}\label{lem-cube} There is a solid 3-dimensional cube $K\subset \R^{3}$ with the following properties.
\begin{enumerate}[(i),nosep]
\item The side length of $K$ is $1$.
\item There are two opposing faces $A$ and $B$ of $K$ such that $A\cap Y_1$ consists of the four corners of $A$ while $B$ is tangent to $Y_\rho$.
\item For every $z\in Z$ and every point $x\in K\cap Y_z$ the angle between the plane tangent to $Y_z$ at $x$ and the plane extending the face $A$ is at most ${\pi}/{4}$.
\end{enumerate}
\end{lemma}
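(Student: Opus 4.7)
The plan is to exhibit the cube explicitly in coordinates and then verify the three conditions by elementary geometry. Place the common centre of the spheres at the origin and, using rotational symmetry, align the $z$-axis with the line from the origin through the centre of the prospective face $A$. A unit square whose four corners lie on $Y_1$ has half-diagonal $\sqrt{2}/2$, so its centre must be at distance $\sqrt{1-1/2} = \sqrt{2}/2$ from the origin. This forces
$$A = [-1/2,1/2]^2 \times \{\sqrt{2}/2\}, \qquad K = [-1/2,1/2]^2 \times [\sqrt{2}/2,\,\rho],$$
with $\rho = 1 + \sqrt{2}/2$ exactly as prescribed. Conditions (1) and (2) are then immediate: the four corners $(\pm 1/2,\pm 1/2,\sqrt{2}/2)$ of $A$ have norm $1$, and the opposing face $B$ lies in the plane $z = \rho$, which is tangent to $Y_\rho$ at $(0,0,\rho)$.

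For (3), the tangent plane to $Y_z$ at a point $x$ is orthogonal to the radial direction $x/\|x\|$, while the plane extending $A$ is horizontal with normal $\hat z$. Hence the angle between these two planes equals the angle between $x/\|x\|$ and $\hat z$, interpreted in $[0,\pi/2]$. So it suffices to show that $x_3/\|x\| \ge \cos(\pi/4) = \sqrt{2}/2$ for every $x = (x_1,x_2,x_3) \in K \cap Y_z$ and every $z \in [1,\rho]$. This is equivalent to $x_3^2 \ge z^2/2$, which follows from $x_1^2 + x_2^2 \le 1/2$ (since $x\in K$) and $z\ge 1$, via
$$x_3^2 \;=\; z^2 - (x_1^2 + x_2^2) \;\ge\; z^2 - 1/2 \;\ge\; z^2/2.$$

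No step presents a real obstacle; the entire argument is a straightforward coordinate computation. The value $\rho = 1 + \sqrt{2}/2$ is precisely what makes $A$ inscribed in $Y_1$ while keeping $K$ a unit cube, and the bound $\pi/4$ in (3) is sharp: it is attained in the limit $z=1$ at the corners of $A$, where $x_1^2 + x_2^2 = 1/2$.
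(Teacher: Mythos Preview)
Your proof is correct and follows essentially the same construction as the paper's: the cube is placed with face $A$ a unit square at height $\sqrt{2}/2$ whose corners sit on $Y_1$, and $B$ at height $\rho$. Your verification of item~(3) via the explicit inequality $x_3^2 = z^2 - (x_1^2+x_2^2) \ge z^2 - 1/2 \ge z^2/2$ is in fact crisper than the paper's sketch, which simply asserts that the angle is maximised at the corners of $A$ when $z=1$ and computes that extremal angle to be $\pi/4$.
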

\begin{proof}[Sketch of Proof] The cube $K$ is pictured in Figure~\ref{fig-diffuser}. We construct it as follows. We start by inscribing into $Y_1$ a 2-dimensional square $A$ of side length $1$ 
whose third coordinate is a positive constant. Then we extend $A$ to a cube of side length $1$ in such a way that $A$ is its lower face (lower with respect to the third coordinate in $\R^3$).  The first property of the lemma is clear by construction.
Let $h$ be the height of a triangle whose vertices are the centre of $Y_1$ together with two opposite vertices of $A$.  One can easily compute that $h = {\sqrt{2}}/{2}$. Thus $h+1 =\rho$ and the second property follows.

Let us show the third property. By elementary considerations the angle in question is maximized at points $x$ which are corners of the face $A$ (when $z=1$). At such $x$ the angle in question is equal to the angle between the diagonal of $A$ and the tangent plane to $Y_1$, and that angle is easily computed to be equal to~${\pi}/{4}$.
\end{proof}

The following lemma, which gives $f\in \iso(\I R^3)$ such that $\{f\}$ is a diffuser, is a routine calculus exercise.

\begin{lemma}\label{lm:CubeDiffuser}
Let $f\colon K\to K$ be an isometry which maps the face $A$ to one of the side faces of~$K$. Let $R\subset [1,\rho]$ be a Borel subset. Then for all $z\in [1,\rho]$ we have
$$
\mu_{z}(f(K_R)) \ge \frac{1}{2}\,\mu(K_R)=\frac{1}{2\sqrt2}\,\frac{\mu(K_R)}{\nu(Z)}. 
$$
\end{lemma}

\begin{proof}
Let $g \colon K\to A$ be the  orthogonal projection onto $A$. Let $\cal L$ be the set of straight line segments contained in the face $A$ that connect $f(B)$ to $f(A)$ and are orthogonal to both $f(B)$ and $f(A)$.

Let us fix $z\in [1,\rho]$ and let $(g\circ f)_z$ be the restriction of $g\circ f$ to  $f^{-1}(K_z)$. Note that $(g\circ f)_z$ is a diffeomorphism from $f^{-1}(K_z)$ to $A$.  For $\ell\in \cal L$ let $\widehat \ell$ be the preimage of $\ell$ under $(g\circ f)_z$.

Let $U\subset \widehat\ell$ be a Borel subset. Note that $f$ maps $\widehat \ell$ isometrically into the sphere $K_z$. It follows that 
we have 
$$
\mu^1( (g \circ f)_z (U)) \ge \cos ({\pi}/{4})\, \mu^1(U) = \frac{\sqrt{2}}{2}\,\mu^1(U).
$$

Since the map $x\mapsto \|x\|$ does not increase distances and maps $K_R\cap \widehat\ell$ onto $R$, we have $ \mu^1(K_R\cap \widehat \ell) \ge \nu(R)$. Consequently we  have 
\beq\label{eq:ee}
\mu^1((g\circ f)_z (K_R\cap \widehat\ell)) \ge \frac{\sqrt{2}}{2}\, \nu(R).
\eeq

Let us fix $\ell\in \cal L$. We clearly have $A = (A\cap f(A))\times \ell$. Furthermore, the elements of $\cal L$ are exactly the sets $\{x\}\times \ell$ in this decomposition. Therefore, by Tonelli's theorem, and since the side length of $K$ is equal to $1$, 
$$
{\mu^2((g\circ f)_z(K_R))} \ge  \min_{\ell\in \cal L}\mu^1((g \circ f)_z(K_R)\cap \ell).
$$

Note that for $\ell \in \cal L$ we have $(g\circ f)_z(K_R)\cap \ell = (g\circ f)_z (K_R\cap \widehat\ell)$, and so by \eqref{eq:ee},
$$
    {\mu^2((g\circ f)_z(K_R))}  \ge \frac{\sqrt{2}}{2}\nu(R).
$$
Note that $(g\circ f)_z(K_R) = g(f(K_R)_z)$. Hence, since $g$ is an orthogonal projection, we have     $\mu_z(f(K_R)) = \mu^2(f(K_R)_z) \ge \mu^2((g\circ f)_z(K_R))$. On the other hand, by \eqref{eq:MF} we have $\mu(K_R) \le \nu(R)\sup\{\mu_z(K)\mid{z\in R}\}$.  Note that for $z\in R$ the set $K_z$ projects onto the face $A$. Hence, by the third item of Lemma~\ref{lem-cube}, we have that
$$ \sup\,\{\mu_z(K)\mid {z\in R}\} \le \frac{\mu^2(A)}{\cos(\frac{\pi}{4})} = \sqrt{2}.$$

Putting these inequalities together, we obtain that
$$
\mu_{z}(f(K_R)) \ge \mu^2((g\circ f)_z(K_R)) \ge \frac{\sqrt{2}}{2}\,\nu(R)\ge \frac{1}{2}\,\mu(K_R),
$$
which is exactly our claim.\end{proof}

\begin{corollary}\label{cr:astro-eta2} Let $\rho:= 1+\sqrt{2}/2$ and let $X\subset [1,\rho]$ be a Borel set of positive measure. Then the set $Y:=\{y\in\R^3: \|y\|\in X\}$ is a domain of expansion for the action $\iso(\I R^3)\actson \I R^3$. 
\end{corollary}
\begin{proof}  Recall that the set $Y$ comes with the spherical foliation by $Z=[1,\rho]$.
By the spectral gap property of $\SO(3)\actson \I S^3$ and Proposition~\ref{pr:Spectral},
the set $Y$ is a domain of leaf-wise expansion. By Lemma~\ref{lm:CubeDiffuser},  $Y$ admits a (single-element) diffuser. Now, the corollary follows from Proposition~\ref{prop:abstract-expansion}.
\end{proof}

We are ready to give another proof for the Euclidean case~of Theorem~\ref{th:DE}.

\begin{proof}[Second proof of Theorem~\ref{th:DE} for $\iso(\I R^n)\actson\I R^n$, $n\ge 3$.]
We use induction on $n$. If $n=3$, then $Y:= \{y\in\I R^3\mid 1 \le \|y\|\le \rho\}$ is a domain of expansion by Corollary~\ref{cr:astro-eta2}. Lemma~\ref{lm:Cover}  gives that every measurable subset $X\subset\I R^3$ that belongs to $\CU$  is a domain of expansion (since $X$ and $Y$ cover each other), as desired.

Suppose that $n\ge 4$. By Lemma~\ref{lm:Cover} again, we only need to show that $Y:=[0,1]^n$ is a domain of expansion.
Note that $Y$ becomes foliated when we take $Z:=[0,1]$ and $Y_z:=[0,1]^{n-1}\times\{z\}$ for $z\in Z$ (with the uniform probability measures).

Take any $\eta>0$. By the inductive assumption, there is a finite $\eta$-expanding set $S\subset \iso(\I R^{n-1})$ for $[0,1]^{n-1}$. Consider $S':=\{\ga'\mid \ga\in S\}\subset \iso(\I R^n)$, where we define $\ga'.x:=(\ga.(x_1,\dots,x_{n-1}),x_n)$ for $x\in\I R^n$ (that is, $\ga'$ acts at $\ga$ on the first $n-1$ coordinates and is trivial on the last coordinate).
The set $S'$ is leaf-wise-$\eta$-expanding. Thus
$Y$ is a domain of leaf-wise expansion.

Let $\ga$ be the isometry of $\R^{n}$ which is the identity on the first $n-2$ coordinates and which is the rotation by $\pi/2$ around the centre of $[0,1]^2$ on the last two coordinates. It is easy to see that for any Borel set $R\subset Z$ and any $z\in Z$ we have $\mu_z (\ga.Y_R) = \mu(Y_R)$. In particular, the singleton set $\{\ga\}$ is a diffuser (with the constant $\D=1$). Now, Proposition~\ref{prop:abstract-expansion} shows that $Y$ is a domain of expansion, finishing the proof.
\end{proof}


Also, it is easy now to derive Theorem~\ref{th:exotic} from the Introduction, namely  the existence of a domain of expansion $Y$ in $\I R^n$, $n\ge 3$, which is closed and nowhere dense. (Note that such a set $Y$ is meager and cannot essentially cover a non-empty open set.)

\begin{proof}[Proof of Theorem~\ref{th:exotic}.] Let $X$ be a closed nowhere dense subset of $[0,1+\sqrt2/2]$ of positive measure, for example, a ``fat'' Cantor set.

If $n=3$,  let $Y:=\{y\in\I R^3: \|y\|\in X\}$; otherwise let $Y:=[0,1]^{n-1}\times X$. The only non-trivial property that we have to check is that $Y$ is a domain of expansion. If $n=3$, this is the result of Corollary~\ref{cr:astro-eta2}. 

Suppose that $n\ge 4$. Here, we take the product-space foliation of $Y=[0,1]^{n-1}\times X$ by the last factor $Z:=X$. Since the action $\iso(\I R^{n-1})\actson \I R^{n-1}$ is expanding, $Y$ is a domain of leaf-wise expansion.
Similarly to the previous proof, one can show that $Y$ admits a diffuser consisting of a single isometry (which is trivial on the first $n-2$ coordinates and is a rotation by angle $\pi/2$ in the last two coordinates). Thus $Y$ is a domain of expansion by Proposition~\ref{prop:abstract-expansion}.\end{proof}

\section{Sample estimates of the number of pieces}\label{se:Numbers}

In this section we give sample estimates of  the number of pieces in some of our constructed equidecompositions. We will need the following result of Lubotzky, Phillips and Sarnak~\cite{LubotzkyPhillipsSarnak86cpam,LubotzkyPhillipsSarnak86cpam2}.

\begin{theorem}\label{th:LPS} For every integer $k\equiv 2\pmod 4$ with $k-1\ge 3$ being a prime, there is a symmetric $k$-set $Q\subseteq \SO(3)$ such that, for the action $\SO(3)\actson\I S^2$, the averaging operator $T_Q(f)=\frac1{k}\sum_{\de\in Q} \de.f$ on $L^2(\I S^2,\mu)$
has the second largest eigenvalue (in absolute value) at most $2\sqrt{k-1}/k$ and thus 
satisfies \beq\label{eq:TQ2}
   \|T_Qf\|_2\le (1-c)\|f\|_2,\quad\mbox{for every $f\in L^2(\I S^2,\mu)$ with $\int_{\I S^2} f\dd\mu=0$},
   \eeq
   where $c:=1-2\sqrt{k-1}/2k$.
   \end{theorem}   
   \bpf Let us outline just the construction of such a set. Take integer quaternions of the form $q=a_0+a_1 \mathbf{i}+a_2\mathbf{j}+a_3\mathbf{k}$ with $\sum_{i=0}^3 a_i^2=k-1$, $a_1,a_2,a_3$ even and $a_0>0$ odd, and then take the standard map that sends a unit quaternion $q/\sqrt{k-1}$ to an element of $\SO(3)$, see e.g.\ \cite[Equation (2.4)]{LubotzkyPhillipsSarnak86cpam2}. There are exactly $k$ such quaternions (\cite[Equation~(2.1)]{LubotzkyPhillipsSarnak86cpam2}) which come in conjugate pairs, giving a symmetric $k$-set of rotations. The stated spectral gap property is exactly the content of \cite[Theorem 2.1]{LubotzkyPhillipsSarnak86cpam2}.\epf

Note that the 
 Alon-Boppana bound~\cite{Nilli91} implies that no larger value of $c$ can work in~\eqref{eq:TQ2} for a symmetric $k$-set~$Q$.

First, we do one example where calculations are easy.

\begin{lemma}\label{lm:sample} Consider the action $\SO(3)\actson(\I S^2,\mu)$.
Let $A,B\subset \I S^2$  be measurable subsets of the same measure such that each contains a closed hemisphere. Then $A$ and $B$ are essentially Borel equidecomposable with at most $54$ pieces.
\end{lemma}
\begin{proof} Before equidecomposing, we can realign the sets so that they contain the same closed hemisphere $H\subset \I S^2$.
Fix an involution $\ga\in\SO(3)$ with $H\cup \ga.H=\I S^2$. Let  $T:=\{e,\ga\}$. Thus $T=T^{-1}$ and $T.A=T.B=\I S^2$. Let $Q\subset \SO(3)$ be the set returned by Theorem~\ref{th:LPS} for $k:=18$.
With $c:=1-2\sqrt{k-1}/k$ as in Theorem~\ref{th:LPS}, define
$$
f(x):=\frac{x}{(1-c)^2+2cx-c^2x}=\frac{81x}{64x+17},\quad \mbox{for $x\in [0,1]$}.
$$ 
 
Let us show that there is $\e>0$ such that 
 \begin{equation}\label{eq:FMin}
 f(x)\ge (1+\e)\min(2x,2/3),\quad \mbox{for every $x\in (0,1]$}.
 \end{equation}
 First, it is easy to see that  for all $x\in [0,1]$ we have $f(x)\ge \min(2x,2/3)$ (which reduces to a linear in $x$ inequality on each of the intervals $[0,1/3]$ and $[1/3,1]$) with equality if and only if $x=0$. 
  Also, $f'(0)=81/17>2$. Thus there is $\delta>0$ such that $f(x)\ge(1+\delta)\, 2x$ for all $x\in [0,\delta)$. By the continuity of the involved functions, there is $\e\in (0,\delta]$ such that~\eqref{eq:FMin} holds in the remaining compact interval $[\delta,1]$, thus proving~\eqref{eq:FMin}.

Fix $\e>0$ satisfying~\eqref{eq:FMin} and let $R:=TQ\cup (TQ)^{-1}$.

Let us show that the graph 
$G:=(A,B,E_{R}\cap (A\times B))$ is a bipartite $(\e/2)$-expander.
Take any measurable $U$ inside a part of~$G$. By $R=R^{-1}$ and the symmetry between $A$ and $B$, assume that  $U\subset A$. By~Proposition~\ref{pr:Spectral}\ref{it:Spectral1}, we have $\mu(Q.U)\ge f(u)$, where $u:=\mu(U)$.  (Recall that $\mu(\I S^2)=1$.) If $\mu(Q.U)\ge (2+\e)u$, then by $T.B=\I S^2$ we can choose $\be\in T$ with 
 \begin{equation}\label{eq:ChooseBeta}
 \mu(N(U))\ge \mu(TQ.U\cap B)\ge \mu(\be Q.U\cap B)=\mu(Q.U\cap \be.B)\ge \frac12\,\mu(Q.U),
 \end{equation}
 which is at least $
 (1+\e/2)u$,
 as desired. (Recall that $\be^{-1}=\be$ for every $\be\in T$.)
 Otherwise, by~\eqref{eq:FMin}, we have that $\mu(\I S^2\setminus Q.U)\le 1-f(u)<1/3$. Let $b:=\mu(B)$. If $b\ge 2/3$, then $\mu(B\setminus TQ.U)\le \mu(\I S^2\setminus Q.U) < b/2$. Otherwise we have  by the argument in~\eqref{eq:ChooseBeta} that
 $\mu(N(U))\ge\mu(Q.U)/2>b/2$.
 We conclude that $G$ is indeed a bipartite $(\e/2)$-expander.
 
By Theorem~\ref{th:LN}, $G$ admits an a.e.-perfect Borel matching. By Lemma~\ref{lm:MEQ}, this matching corresponds to an essential Borel equidecompositions between $A$ and $B$ using elements from $R$ only.
Since $Q$ is symmetric and $T\ni e$, we have that $|TQ\cap (TQ)^{-1}|\ge |Q|$ and thus $|R|\le 2\cdot |T|\cdot |Q|-|Q|=54$, as required.\end{proof}

Next, we give a rough estimate of the number of pieces needed for measurable equidecomposition between a cube and a Euclidean ball in $\I R^3$. 
Our purpose is just to demonstrate how an explicit bound can be derived from our results.

First, we need to understand how large the constructed $\eta$-expanding sets are.
For a domain of expansion $Y\subset \Om$ and $\eta\in (0,1)$, let $s(\eta;Y)$ be the smallest $s\in\I N$ such that there is a real $\de\in(0,\eta)$ and a $\de$-expanding set $S\subset \Ga$ for $Y$ with $|S|\le s$.

\begin{lemma}\label{lm:sSphere}
Let $\eta\in(0,1)$. Let $k\equiv 2\pmod 4$ be such that $k-1$ is a prime. Define $c:=1-2\sqrt{k-1}/k$ and 
$$
c':=c(2-c)=\frac{(k-2)^2}{k^2}.
$$
 If $\ell\in\I N$ satisfies $(1+c' \eta)^\ell>1/\eta$, then $s(\eta;{\I S^2})\le 1+\sum_{i=0}^{\ell-1} k(k-1)^i$.
 \end{lemma}
 \bpf Let $Q\subseteq \SO(3)$ be the symmetric $k$-set returned by Theorem~\ref{th:LPS} that satisfies~\eqref{eq:TQ2}.
By Proposition~\ref{pr:Spectral}\ref{it:Spectral2},
the set $Q^\ell$ is $\eta$-expanding for the whole space $\I S^2$. The size of $Q^\ell$ can be upper bounded by the number of words over $Q$ of length at most $\ell$ that do not contain an element next to its inverse, giving the stated bound.
\epf

\begin{lemma}\label{lm:sample2} Let $A$ and $B$ be, respectively, a cube and a ball  in $\R^3$ of the same volume. Let $Y$ be the annulus, as defined in~\eqref{eq:Y}. Then $A$ and $B$ are measurably equidecomposable with 
at most $1400\cdot s(1/1109;Y)+2000$ pieces.
\end{lemma}

\begin{proof} First we rescale $A$ and $B$ by the same factor so that each fits inside the annulus. Namely, we scale the ball to have radius $r:=(\rho-1)/2$. Then the cube has side length $\lambda:=(\frac{4\pi}3)^{1/3} r$ and elementary calculations show that it fits into $Y$ (e.g.\ touching the unit sphere $Y_1$ in the centre of its face). We can cover $Y$ by at most $m:=\lceil 2\rho/\lambda \rceil^3$ scaled cubes, arranged as a grid. Also, if we take a maximal packing of balls of radius $r/2$ with centres in $Y$, then balls with the same centres but of the twice larger radius $r$ cover $Y$. Since the smaller balls are disjoint and lie entirely between the spheres of radii $1-r/2$ and $\rho+r/2$, we have at most $n:=\lfloor((\rho+\frac r2)^3-(1-\frac r2)^3)/(\frac r2)^3\rfloor$ balls. Numerical calculations show that
$m=216$ and $n=1109$. 
Let $N,M\subset \iso(\I R^3)$ be the sets of translations of these sizes that achieve $M.A\supset Y$ and $N.B\supset Y$. 
By translating the ball and the cube, we can assume that each of $M$ and $N$ contains the identity.
Let  $S\subset \iso(\I R^3)$ be an $\eta$-expanding set for $Y$ with $\eta<1/1109$ of size $s(1/1109;Y)$.  Define $R:=S^{-1}M\cup N^{-1}S$. 

Let us show that the graph $G:=(A,B,E_R\cap (A\times B))$ generated by $R$ with parts $A$ and $B$ is a bipartite $c$-expander with $c:=1/(1109\,\eta)-1>0$. Indeed, take any measurable set $U$ inside one part, say $U\subseteq B$. Note that the neighbourhood of $U$ in $A$ is $R^{-1}.U\cap A\supset M^{-1}S.U\cap A$. If $\mu(S.U\cap Y)\ge \mu(U)/\eta$ then, by $M.A\supset Y$,
there is $\gamma\in M$ with 
$$
 \mu(\gamma^{-1}S.U\cap A)=\mu(S.U\cap \gamma.A)\ge \frac{\mu(S.U\cap Y)}m\ge \frac{\mu(U)}{\eta m}\ge (1+c)\mu(U),$$
 as desired. Otherwise, by the choice of $S$, we have $\mu(Y\setminus S.U)\le \eta\mu(Y)< \mu(A)/2$. (Note that $\mu(Y)/\mu(A)=(\rho^3-1)/r^3<1109/2$.) Since $M\ni e$ and $A\subset Y$, we have $\mu(M^{-1}S.U\cap A)\ge \mu(S.U\cap A)>\mu(A)/2$. Thus $G$ is indeed a bipartite $c$-expander.

By Theorem~\ref{th:LN}, the set $A$ and $B$ are essentially Borel equidecomposable using elements from $R$ only. 
In order to get a measurable exact decomposition between $A$ and $B$, it is enough by Proposition~\ref{pr:combine}\ref{it:combine1} to add some isometries to $R$ showing that $A$ and $B$ are set-theoretically equidecomposable. 
By the standard proof of Banach-Schr\"oder-Bernstein Lemma (Lemma~\ref{lm:BSB}), it is enough that the extra isometries show, in the notation of Section~\ref{se:aux}, that $[A]\preceq [B]$ and $[B]\preceq [A]$. The original proof of the Banach-Tarski Paradox shows that the ball doubling $[B]\sim 2[B]$ can be shown with 5 isometries and thus $2^k[B]\preceq [B]$ can be shown with $5^k$ isometries.
Rather roughly, 8 copies each of the sets $A$ and $B$ are enough to cover the other:
indeed, ``one eighth'' of each set, namely $[0,\lambda/2]^3$ and $\{v\in\I R^3\mid v\ge 0, \, \|v\|_2\le r\}$ respectively, can be covered by one translate of the other set. 
Thus $[A]\preceq 8[B]\preceq [B]$ can be shown with $8\cdot 5^3$ isometries. Also, $8[B]\preceq [B]\preceq 8[A]$ needs at most $5^3\cdot 8$ isometries. The relation $8[B]\preceq 8[A]$ means that these isometries can be used to construct a bipartite (multi-)graph $H$ with parts $A$ and $B$ such that every vertex of $A$ (resp.\ $B$) has degree at most $8$ (resp.\ exactly $8$). By Rado's theorem~\cite{Rado42}, $H$ has a matching covering every vertex of~$B$. Thus we can show $[B]\preceq [A]$ with at most 1000 isometries. In total, we need to add at most $2000$ further elements to $R$ to satisfy the lemma.\end{proof}

\begin{remark}\rm \label{re:sample2}
In order to estimate the number of pieces in Lemma~\ref{lm:sample2}, we need to estimate the function $s(\eta;Y)$ for the annulus~$Y$.
By Lemma~\ref{lm:CubeDiffuser}, there is a single-element $\D$-diffuser $T:=\{f\}$ with $\D:=1/(2\sqrt2)$. As we are free to scale each $\mu_z$ by $\alpha$ and $\nu$ by $1/\alpha$, we can assume that $\mu_1(Y)=1/\rho$ and $\mu_\rho(Y)=\rho$. Thus, when we apply Proposition~\ref{prop:abstract-expansion}, we can take $M:=\rho$. For any $\eta\in (0,1/2)$,  Proposition~\ref{prop:abstract-expansion} tells us to set $\eps:=\eta/(2\rho^2)$, $\delta:=\e\D/(\eps+|T|)$ and $\beta:=\delta\eps/(2\rho)$. In particular, if we take
$\eta=1/1109$, then our numerical calculations indicate that $\de=5.46...\cdot 10^{-5}$ and $\be=2.47...\cdot 10^{-9}$, and that $k:=6$ and $\ell:=4.3\cdot 10^5$ (resp.\ $\ell:=1.8\cdot 10^{10}$) satisfy  Lemma~\ref{lm:sSphere} when estimating $s(\de;Y)$ (resp.\ $s(\be;Y)$). 
Thus, by taking  minimum expanding sets $S_\de,S_\be\subset \SO(3)$ in Proposition~\ref{prop:abstract-expansion}, we get that at most 
$$1400\cdot |S_\de|\cdot(|S_\be|+1)+2000< 5^{1.81\cdot 10^{10}}<20^{10^{10}}
$$ 
pieces should be enough to measurably equidecompose the ball and the cube.
Of course, many improvements of this bound
are possible, even with 
very simple additional ideas.
However, we do not see a way to obtain any reasonable bounds here, so we do not pursue this direction any further.
\end{remark}

\section{Proof of Theorem~\ref{th:mean}}\label{se:mean}

Theorem~\ref{th:mean} follows from the following result where we have collected all the properties that are used in our proof: it is trivial to check that each action listed in Example~\ref{ex:1} (which is expanding by Theorem~\ref{th:DE}) 
 satisfies all assumptions of Lemma~\ref{lm:mean}.

\begin{lemma}\label{lm:mean} Under Assumptions~\ref{as:1} and~\ref{as:mu}, let the action $a:\Gamma\actson \Omega$ 
	be paradoxical and expanding. Suppose that the family of meager subsets of $\Om$ is $a$-invariant, the measure $\mu$ is atomless and there is a non-empty open set $W\subset \Om$ with $\mu(W)<\infty$. Let 
	$$\C A:=\{X\subset \Om\mid X\in \C B_\mu\cap\C T\mbox{ and $\O X$ is compact}\}.$$ 
		Then every 
	mean $\kappa:\C A\to [0,\infty)$ is a constant multiple of the measure~$\mu$.
\end{lemma}

\begin{proof} If $|\Om|=1$, then there is nothing to do. Otherwise, by e.g.\ the paradoxicality of the action, $\Om$ has infinitely many points. Fix pairwise disjoint non-empty open sets $U_1,U_2,U_3\subset\Om$ with compact closures.

	\begin{claim}\label{cl:MuKappa}
		Let $i\in [3]$ and $A,B\in\C A$ be subsets of $\Omega\setminus U_i$.
		\begin{enumerate}[(i),nosep]
			\item If $\mu(A)=\mu(B)$, then $\kappa(A)=\kappa(B)$. 
			\item If $\mu(A)\ge r\mu(B)>0$ for some $r\in\I R$, then $\kappa(A)\ge r\kappa(B)$.
		\end{enumerate}
	\end{claim}
	\begin{proof}[Proof of Claim.]
		Let us show the first part. The sets $A':=A\cup U_i$ and $B':=B\cup U_i$ are in $\C A$ (since $U_i\in\C A$ while $\C A$ is closed under unions). 
		
		Since the open set $W$ with $\mu(W)<\infty$ covers every other set in $\CU$ by Lemma~\ref{lm:minimal} and $\mu$ is invariant, we have 	for every $Y\in\CU$  that $\mu(Y)<\infty$ (and that $\mu(Y)>0$ since $Y$ covers a non-empty open set). In particular, $0<\mu(A')=\mu(B')<\infty$. Since
		the action $a$ is expanding, the sets $A'$ and $B'$ are domains of expansion. 
		By Proposition~\ref{pr:paradox}\ref{it:Paradox2} (resp.\ Theorem~\ref{th:suff}\ref{it:suff2}), the sets $A$ and $B$ are Baire (resp.\ Lebesgue) equidecomposable. Thus, by Proposition~\ref{pr:combine}\ref{it:combine2}, these sets are Baire-Lebesgue equidecomposable, say with pieces $A'=A_1\sqcup\ldots\sqcup A_m$ and $B'=B_1\sqcup\ldots\sqcup B_m$. Each piece belongs to $\C A$: it is in $\C B_\mu\cap\C T$ by definition and has compact closure as a subset of $A'$ or $B'$.
		By the finite
		additivity and $a$-invariance of $\kappa$, we have
		$$
		\kappa(A')=\sum_{j=1}^m \kappa(A_j)=\sum_{j=1}^m \kappa(B_j)=\kappa(B').
		$$
		Thus $\kappa(A)=\kappa(A')-\kappa(U_i)=\kappa(B')-\kappa(U_i)=\kappa(B)$, proving the first part of the claim.
		
		Let us show the second part. 
		
		First, assume that $r=p/q$ with $p,q\in\I N$.
		We use a consequence of Sierpi\'nski's theorem~\cite{Sierpinski22} (for a modern proof see e.g.\ \cite[Proposition~A.1]{DudleyNorvaisa11cfc}) that if $(\Omega',\C A',\mu')$ is an arbitrary non-atomic
		measure space with $\mu'(\Omega')<\infty$, then 
		for every $\rho\in [0,\mu'(\Omega')]$ there is $Y\in\C A'$ with $\mu'(Y)=\rho$. By applying this result to the $\sigma$-algebra
		$\C B_\mu\cap \C T\supset\C A$ of all Baire-Lebesgue subsets of $\Omega$,  we can find $(\C B_\mu\cap \C T)$-measurable partitions 
		$A=A_0\sqcup A_1\sqcup \ldots\sqcup A_p$ and $B=B_1\sqcup\ldots\sqcup B_q$ such that each of the $p+q$ sets $A_1,\ldots,A_p,B_1,\ldots,B_q$
		has measure exactly $\mu(B)/q$. Again, each piece is in~$\C A$. We conclude by the first part of the claim that all these sets, except $A_0$, have the same $\kappa$-value which has to be $\kappa(B)/q$.
		Thus
		$\kappa(A)\ge \sum_{i=1}^p \kappa(A_i)\ge p\,\kappa(B)/q$, as required. 
		
		Now, take an arbitrary positive $r\in\I R$. Suppose on the contrary that $\kappa(A)<r\,\kappa(B)$. Then, trivially, $\kappa(B)>0$ and there are $p,q\in\I N$ with $\kappa(A)< \frac pq\, \kappa(B)$ and $p/q<r$. However, this contradicts the conclusion of the previous paragraph.
	\end{proof}
	
	Observe that, for every $\mu$-null set $A\in\C A$, we have that $\kappa(A)=0$.
	Indeed, since $A= (A\setminus U_1)\sqcup (A\cap U_1)$, it is 
	enough by the additivity of $\kappa$ to prove this when $A\subset \Omega\setminus U_i$ for some $i\in[3]$. By taking $B=\emptyset$ in the first part of Claim~\ref{cl:MuKappa} we get that $\kappa(A)=\kappa(\emptyset)=0$, as desired. 
	
	This and the second part of Claim~\ref{cl:MuKappa}  give for every $i\in[3]$ that there is $c_i\in\I R$ such that, for every $\C A$-subset 
	$A$ of $\Omega\setminus U_i$, we have $\kappa(A)=c_i\,\mu(A)$. Furthermore, by $\kappa(U_3)=c_1\,\mu(U_3)=c_2\,\mu(U_3)$
	and $0<\mu(U_3)<\infty$, we conclude that $c_1=c_2$. This constant $c:=c_1=c_2$ works for every $A\in\C A$:
	$$\kappa(A)=\kappa(A\cap U_1)+\kappa(A\setminus U_1)=c_2\,\mu(A\cap U_1)+c_1\,\mu(A\setminus U_1)=c\,\mu(A),
	$$
	finishing the proof of the lemma.
\end{proof}

\section{Concluding Remarks}\label{se:conluding}

Note that there are paradoxical actions which are not expanding. For example, take two isometries in $SO(3)$ such that they generate a rank-2 free group $F$ and the
action $b:F\actson \I S^2$ has spectral gap, let $c:\I Z\actson \I S^1$ where the generator of $\I Z$ is a rotation by some angle $\theta\not\in \pi\I Q$, and consider the action $a:(F\times\I Z)\actson \Om$, where $\Om:=\I S^2\times \I S^1$ and $a((\gamma,n),(x,y)):=(b(\ga,x),c(n,y))$.
Since $b$ is paradoxical (e.g.\ by \cite[Theorem~5.5]{TomkowiczWagon:btp}), we can double the whole space $\Om$ also under the action $a$ (by using only elements of the form $(\gamma,0)\in F\times\I Z$).  Since every orbit of $b$ or $c$ is dense (in the former case by, for example, the proof of~\cite[Theorem~2.7]{LubotzkyPhillipsSarnak86cpam}), this also holds for the new action~$a$. Thus, with respect to the action $a$, every set with non-empty interior covers $\Om$ and is equidecomposable to $\Om$ by the proof of Proposition~\ref{pr:paradox}\ref{it:paradox1}. It follows that the action $a$ is paradoxical. On the other hand,  the products $\I S^2\times Y$, where we take for $Y$ 
almost $b$-invariant sets of measure $1/2$, are almost $a$-invariant sets. This shows that the action $a$ 
is not expanding.

An interesting open question that remains is whether any two bounded Borel subsets of $\I R^n$,
$n\ge3$, that have non-empty interior and the same Lebesgue measure are equidecomposable 
with Borel pieces. Our Theorem~\ref{th:MainRn} implies that the answer is in the affirmative,
provided we can first remove a Borel null set from each set.

Also, it would be interesting to find some alternative characterization of bounded measurable subsets of $\I R^n$, $n\ge 3$, that are domains of expansion. By  Lemma~\ref{lm:Cover}, essentially covering a non-empty open set is a sufficient condition. However, it is not necessary by Theorem~\ref{th:exotic}.

Of course, the above questions can also be asked for other group actions. In particular, they are open for all actions listed in Example~\ref{ex:1}, as far as the authors know.

Another interesting question is to get reasonable upper bounds on the sizes of expanding sets (namely to estimate the function $s(\eta;X)$ from Section~\ref{se:Numbers}) when, for example, $X=[0,1]^3$ is the unit cube under the action~$\iso(\I R^3)\actson \I R^3$.

\bigskip
\footnotesize
\noindent\textit{Acknowledgments.}
The authors thank Adrian Ioana, Greg Tomkowicz and P\'eter Varj\'u for useful discussions or comments. Also, the authors are grateful to the anonymous referee for helpful comments.

{\L}ukasz Grabowski was partially supported by EPSRC grant~EP/K012045/1, ERC Starting Grant
805495 and by Fondations Sciences Math{\'e}matiques de Paris during the program \textit{Marches Aléatoires et Géométrie Asymptotique des Groupes} at Institut Henri-Poincaré.

Andr\'as M\'ath\'e was partially supported by a Leverhulme Trust Early Career Fellowship
and by the Hungarian National Research, Development and Innovation Office -- NKFIH, grants no.\ 104178 and 124749.

Oleg Pikhurko was partially supported by ERC
grants~306493 and 101020255, EPSRC grant~EP/K012045/1 and Leverhulme Research Project Grant RPG-2018-424.


\end{document}

\end{document}